\documentclass[12pt,twoside]{amsart}

\usepackage{amsmath}
\usepackage{amsfonts}
\usepackage{amssymb,enumerate}
\usepackage{amsthm}
\usepackage[all]{xy}
\usepackage{hyperref}
\allowdisplaybreaks[4] \footskip=12pt
\renewcommand{\uppercasenonmath}[1]{}

\numberwithin{equation}{section} \theoremstyle{plain}
\newtheorem*{thm*}{Main Theorem}
\newtheorem{thm}{Theorem}[section]
\newtheorem{cor}[thm]{Corollary}
\newtheorem*{cor*}{Corollary}
\newtheorem{lem}[thm]{Lemma}
\newtheorem*{lem*}{Lemma}

\newtheorem*{fact*}{Fact}

\newtheorem*{nota*}{Notation}

\newtheorem*{prop*}{Proposition}
\newtheorem{rem}[thm]{Remark}
\newtheorem*{rem*}{Remark}

\newtheorem*{observation*}{Observation}
\newtheorem{exa}[thm]{Example}
\newtheorem*{exa*}{Example}
\newtheorem{df}[thm]{Definition}
\newtheorem*{df*}{Definition}

\newtheorem*{conj*}{Conjecture}

\newtheorem*{con*}{Construction}

\renewcommand{\geq}{\geqslant}
\renewcommand{\leq}{\leqslant}

\begin{document}
\begin{center}
{\large  \bf  Flat and Ghost dimensions in tensor triangulated categories with a compact tilting generator}

\vspace{0.5cm}  Xiaoyan Yang\\
School of Science, Zhejiang University of Science and Technology, Hangzhou 310023\\
E-mail: yangxy@zust.edu.cn
\end{center}

\bigskip
\centerline { \bf  Abstract}
\leftskip10truemm \rightskip10truemm \noindent  Let $\mathcal{T}$ be a tensor triangulated category with a compact tilting generator $M$ and abelian heart $\mathcal{H}_M = M^{\bot_{\neq0}}$.
We define a triangulated flat dimension $\mathrm{flatdim}_{\mathcal{T}}X$ for $X\in\mathcal{T}$, and give criteria in terms of tensor vanishing and resolutions by the class of pure quotients of $\mathrm{Add}(M)$.
We further introduce $M$-ghost length and $M$-ghost dimension, and show that the global dimension of
$\mathcal{H}_M$ controls the $M$-ghost length of all objects in $\mathcal{T}$, and that the $M$-ghost dimension of $\mathcal{T}$ coincides with the weak global dimension of $\mathcal{H}_M$ and the supremum of $\mathrm{flatdim}_{\mathcal{T}}X+\mathrm{inf}X$ of bounded objects, and the relative triangulated dimension of
$\mathcal{T}^c$ with respect to $\mathrm{add}(M)$.
This extends the classical Hovey-Lockridge theorem, which is recovered for derived categories of rings.\\
\vbox to 0.3cm{}\\
{\it Key Words:}  tilting; ghost morphism; flat dimension; ghost dimension \\
{\it 2020 Mathematics Subject Classification:} 18G80; 18G20; 18G90

\leftskip0truemm \rightskip0truemm
\bigskip
\section* { \bf Introduction}

Triangulated categories arise naturally in many areas of modern algebra and geometry, including derived categories, stable homotopy theory, and representation theory. A central challenge is to import the tools of classical homological algebra into this setting, where abelian structures are absent. Tilting (or silting) objects provide a bridge: a tilting object $M$ in a triangulated category $\mathcal{T}$ induces an abelian heart $M^{\bot_{\neq0}}=\{Y\in\mathcal{T}\hspace{0.02cm}|\hspace{0.02cm}\mathrm{Hom}_\mathcal{T}(M,Y[i])=0\ \textrm{for}\ i\neq0\}$, which inherits much of the homological information from $\mathcal{T}$.
This allows one to translate problems in triangulated categories into problems in module categories or more general abelian categories, where tools like projective dimension and global dimension are available (see, e.g., \cite{AGMSV,BCRPZ,CCZ,HKM,Ko,N0,Y1}).

In a triangulated category equipped with a distinguished object (like the ring
$R$ in its derived category $\mathrm{D}(R)$), a morphism is called a \emph{ghost}  if it becomes zero after applying the representable functor $\mathrm{Hom}_{\mathrm{D}(R)}(R[i],-)$, i.e., if it is invisible to the cohomology theory represented by $R$. Ghosts form an ideal, and their composites define a filtration: an object $X$ has ghost length at most
$n$ if every composite of
$n+1$ ghosts out of
$X$ vanishes. Taking the supremum over perfect complexes gives the ghost dimension
$\mathrm{gh.dim}R$ of $R$. Hovey and Lockridge \cite{H} established the fundamental identification
\begin{center}$\mathrm{gh.dim}R=\mathrm{wgldim}R$,\end{center}where
$\mathrm{wgldim}R$ denotes the classical weak global dimension. This theorem provides a striking link between a purely triangulated notion and a classical ring-theoretic invariant.
 It has deep roots in stable homotopy theory: the generating hypothesis asks whether every ghost between finite spectra is zero, and ghost dimension precisely measures the failure of this hypothesis. Moreover, the invariant extends to ring spectra, where classical module categories are unavailable, and recovers the expected weak dimension for Eilenberg-Mac Lane spectra, confirming its topological relevance, see \cite{HL,H,HLP,L}.

The key to the Hovey-Lockridge theorem is flatness: the weak global dimension on the right-hand side is defined via flat resolutions, and in the derived category
$\mathrm{D}(R)$,  flat dimension admits a characterization via the vanishing of derived tensor products with arbitrary complexes. This tensor product characterization is precisely what allows one to connect ghost dimension to weak global dimension. Thus flatness is not merely a classical notion but a central ingredient in the triangulated story.

The classical framework, however, relies crucially on the tensor product of modules. When one moves beyond $\mathrm{D}(R)$ to a general compactly generated triangulated category $\mathcal{T}$ with a compact silting object
$M$, this structure is lost. Although $M$ induces a $t$-structure $(\mathrm{Add}(M)^{\bot_{>0}}, \mathrm{Add}(M)^{\bot_{\leq 0}})$ with abelian heart $\mathcal{H}_M=M^{\bot_{\neq0}}$ and a cohomological functor $\mathcal{H}^0_M(-):\mathcal{T}\rightarrow\mathcal{H}_M$, flat dimension is no longer canonically defined. Yang \cite{Y1} has made significant progress by showing that silting objects give rise to sup-projective and inf-injective resolutions, thereby developing a homological dimension theory for
$\mathcal{T}$. While this provides a powerful unified framework, a flat dimension that genuinely mirrors the classical tensor product vanishing criterion has remained undefined.

In this paper, we work in the tensor triangulated setting with tensor unit ${\i}$, where a tensor product is available, and fill this gap by introducing a flat dimension
$\mathrm{flatdim}_{\mathcal{T}}X$ for $X\in\mathcal{T}$,  defined via the vanishing of negative cohomology after tensoring with bounded-below objects. Intuitively, this measures the least shift $n$ for which $U\otimes X$ has no negative cohomology for any bounded-below $U$. Under the hypothesis \(\operatorname{add}(M)=\operatorname{add}({\i})\), we establish its relation to the heart $\mathcal{H}_M$ and to resolutions by the class \(\mathcal{F}\) of pure quotients of \(\mathrm{Add}(M)\) (Theorems \ref{lem2.1} and \ref{lem2.21}), thereby recovering the classical flat dimension characterization for complexes and differential graded (dg) modules as  special cases.

 The resulting notion of
$M$-ghosts, maps $f$ satisfying $\mathrm{Hom}_\mathcal{T}(M[-i], f) = 0$ for all $i \in \mathbb{Z}$, has recently attracted growing interest, as the tilting object provides a bridge between the triangulated category and its heart. This naturally raises the question: how do the $M$-ghost dimension of
$\mathcal{T}$, the (weak) global dimension of $\mathcal{H}_M$, the flat dimension of objects, and the generation of $\mathcal{T}^c$ by $\mathrm{add}(M)$ related?

We address this question by introducing $M$-ghost length and
$M$-ghost dimension. An object $X\in\mathcal{T}$ is said to \emph{have
$M$-ghost length at most $n$}, $\mathrm{gh.len}_MX\leq n$, if every composite
of $n+1$ $M$-ghosts is $0$. The
$M$-ghost dimension of $\mathcal{T}$ is defined as
\begin{center}$\mathrm{gh.dim}\mathcal{T}=
\mathrm{sup}\{\mathrm{gh.len}_MX\hspace{0.03cm}|\hspace{0.03cm}X\in\mathcal{T}^c\}$,\end{center}
where
$\mathcal{T}^c$ denotes the full subcategory of compact objects. Our first  main theorem (Theorem \ref{lem5.2}) shows that, for a tilting object
$M$ and $n\geq0$,
\begin{center}$\mathrm{gldim}\mathcal{H}_M\leq n\Longleftrightarrow\mathrm{gh.len}_MX\leq n\ \textrm{for\ all}\ X\in\mathcal{T}$.\end{center}
Thus the global dimension of the heart
$\mathcal{H}_M$  precisely controls the
$M$-ghost length of every object. The proof proceeds by translating ghost composites in $\mathcal{T}$ into exact sequences in $\mathcal{H}_M$ via the
$t$-structure, and then applying the standard projective resolution criterion for projective dimension.

Following \cite{AAITY},  the \emph{dimension of $\mathcal{T}^c$ with respect to $\mathrm{add}(M)$} is defined as
\begin{center}$\mathrm{add}(M)\textrm{-tri.dim}\mathcal{T}^c:=\textrm{inf}\{n\geq0\hspace{0.02cm}|\hspace{0.02cm}
\mathcal{T}^c=\mathrm{thick}^{n+1}(\mathrm{add}(M))\}$.\end{center} Our second main theorem (Theorem \ref{lem5.5}) establishes the stronger equalities
\begin{center}$\mathrm{sup}\{\mathrm{flatdim}_{\mathcal{T}}X+\mathrm{inf}X\hspace{0.03cm}|\hspace{0.03cm}X\in\mathcal{T}^b\}
=\mathrm{wgldim}\mathcal{H}_M=\mathrm{gh.dim}\mathcal{T}
=\mathrm{add}(M)\textrm{-}\mathrm{tri.dim}\mathcal{T}^c$.\end{center}
The key novelty here is the replacement of projective objects by flat ones: we show that the class \(\mathcal{F}\) plays the role of flat objects in $\mathcal{H}_M$. This result directly parallels the classical Hovey-Lockridge theorem and confirms that our flat dimension is the correct invariant for bridging ghost theory with homological algebra in the triangulated setting.

The paper is organised as follows. Section 1 recalls the necessary background on compactly generated tensor triangulated categories, $t$-structures, duality, and ghost ideals. Section 2 develops the technical machinery of flat dimension, giving vanishing and resolution-theoretic criteria (Theorems \ref{lem2.1} and \ref{lem2.21}). Section 3 introduces
$M$-ghost length and $M$-ghost dimension, and contains the main theorems (Theorems  \ref{lem5.2} and \ref{lem5.5}), which unify the ghost dimension of $\mathcal{T}$, the (weak) global dimension of the heart $\mathcal{H}_M$, the flat dimension of bounded objects, and the relative triangulated dimension of compact objects with respect to
$\operatorname{add}(M)$. Finally, we apply our results to derived categories of rings, thereby recovering classical theorems of Hovey and Lockridge as special cases.

\bigskip
\section{\bf Preliminaries}
This section is devoted to recalling the necessary preliminaries and some key tools, it will be used throughout the sequel.
For terminology we shall follow \cite{BBD,BW,KS,Y1}.

 A triple $(\mathcal{T},[1],\Delta)$ is called a \emph{triangulated category}, where $\mathcal{T}$ is an additive category, $[1]$
is an autoequivalence of $\mathcal{T}$ and $\Delta$ is a full subcategory of Diag$(\mathcal{T},[1])$ which satisfies some axioms, where Diag$(\mathcal{T},[1])$ denotes the category whose objects are diagrams in $\mathcal{T}$ of the form $X\rightarrow Y \rightarrow Z\rightarrow X[1]$. The elements of $\Delta$ are called \emph{exact triangles}.

For full subcategories $\mathcal{X},\mathcal{Y}\subseteq\mathcal{T}$, let $\mathcal{X}\ast\mathcal{Y}$ denote the class of objects $Z$  fitting into an exact triangle
 $X\rightarrow Z\rightarrow Y\rightarrow X[1]$ with $X\in\mathcal{X}$ and $Y\in\mathcal{Y}$. Let $\mathrm{Add}(\mathcal{X})$ (resp. $\mathrm{Prod}(\mathcal{X})$) be the category of summands of
 coproducts (resp. products) of objects in $\mathcal{X}$, and $\mathrm{add}(\mathcal{X})$ the summands of finite coproducts. Define $\mathrm{thick}^0(\mathcal{X})=\mathrm{add}(\emptyset)$, $\mathrm{thick}^1(\mathcal{X})=\mathrm{add}\{X[i]\hspace{0.02cm}|\hspace{0.02cm}X\in\mathcal{X}, i\in\mathbb{Z}\}$,  and inductively $\mathrm{thick}^n(\mathcal{X})=
\mathrm{add}(\mathrm{thick}^1(\mathcal{X})\ast\mathrm{thick}^{n-1}(\mathcal{C}))$. Similarly, set $\langle\mathcal{X}\rangle_0=\{0\}$, $\langle\mathcal{X}\rangle_1=\mathrm{Add}\{X[i]\hspace{0.02cm}|\hspace{0.02cm}X\in\mathcal{X}, i\in\mathbb{Z}\}$, and $\langle\mathcal{X}\rangle_n=\mathrm{Add}(\langle \mathcal{X}\rangle_1\ast\langle \mathcal{X}\rangle_{n-1})$ for $n\geq1$. For an interval $\Theta\subseteq\mathbb{Z}$, write \begin{center}$\mathcal{X}^{\perp_\Theta}:=\{Y\in\mathcal{T}\hspace{0.02cm}|\hspace{0.02cm}\mathrm{Hom}_\mathcal{T}(X,Y[i])=0\ \textrm{for}\ X\in\mathcal{X}\ \textrm{and}\ i\in \Theta\},$\end{center} with the convention that $\perp=\perp_0$.

\vspace{2mm}
{\bf Compactly generated triangulated categories.} A pair $(\mathcal{U},\mathcal{V})$ of subcategories  in a triangulated category $\mathcal{T}$ is said to be a \emph{t-structure} if $\mathcal{U}[1]\subseteq\mathcal{U}$, $\mathcal{U}^\bot=\mathcal{V}, {^\bot}\mathcal{V}=\mathcal{U}$ and
$\mathcal{U}\ast\mathcal{V}=\mathcal{T}$.  The \emph{heart} of a $t$-structure is the abelian category $\mathcal{U}\cap\mathcal{V}[1]$. A subcategory
$\mathcal{M}$ of $\mathcal{T}$ is \emph{silting} if $(\mathcal{M}^{\bot_{>0}}, \mathcal{M}^{\bot_{\leq0}})$ is a $t$-structure. An object $M$ in $\mathcal{T}$ is \emph{silting} if $\mathrm{Add}(M)$ is a silting subcategory. If moreover $\mathrm{Hom}_\mathcal{T}(\mathcal{M},\mathcal{M}[\neq0])=0$, then  $\mathcal{M}$ (or $M$) is
\emph{tilting}.

An object $X$ in a triangulated category $\mathcal{T}$ is said to
be \emph{compact} if $\mathrm{Hom}_\mathcal{T}(X,-)$ commutes with all existing coproducts. Compact objects form a thick subcategory
of $\mathcal{T}$, denoted by $\mathcal{T}^c$. If $\mathcal{T}$ is cocomplete, $\mathcal{T}^c$ is skeletally small and $(\mathcal{T}^c)^\bot=0$, then $\mathcal{T}$ is called \emph{compactly generated}. In this case,
for an object $C$ in $\mathcal{T}^c$, there is a unique object $C^\ast$
of $\mathcal{T}$, up to isomorphism, representing the functor
$$\mathrm{Hom}_\mathbb{Z}(\mathrm{Hom}_\mathcal{T}(C,-),\mathbb{Q}/\mathbb{Z}):
\mathcal{T}^{\mathrm{op}}\rightarrow\mathrm{Ab}$$ We use
$(-)^\ast$
  to denote this Brown-Comenetz dual, it is a contravariant functor on $\mathcal{T}^c$.

Let $M$ be a compact silting generator. Denote $\mathcal{P}=\mathrm{Add}(M)$ and $\mathcal{I}=\mathrm{Prod}(M^\ast)$. For $n\geq0$, put $\mathcal{T}^{\leq n}=\mathcal{P}^{\perp_{>n}}$
and  $\mathcal{T}^{\geq n+1}=\mathcal{P}^{\perp_{\leq n}}$,  these form the $t$-structure induced by $M$. Write $\mathcal{T}^{-}=\bigcup_{n>0}\mathcal{T}^{\leq n},\ \ \mathcal{T}^{+}=\bigcup_{n>0}\mathcal{T}^{\geq -n}$.
The truncation functors
 $\tau^{\leq \ell}$ and $\tau^{\geq \ell}$ associated to this $t$-structure  give the following canonical triangles for
$X\in\mathcal{T}$ and $\ell\in\mathbb{Z}$:
  $$\tau^{\leq \ell}(X)\rightarrow X\rightarrow \tau^{\geq \ell+1}(X)\rightarrow\tau^{\leq \ell}(X)[1],$$
 $$\tau^{\leq \ell-1}(X)\rightarrow \tau^{\leq \ell}(X)\rightarrow\mathcal{H}^{\ell}_M(X)[-\ell]\rightarrow\tau^{\leq \ell-1}(X)[1],$$
$$\tau^{\geq\ell+1}(X)[-1]\rightarrow\mathcal{H}^{\ell}_M(X)[-\ell]\rightarrow\tau^{\geq\ell}(X)\rightarrow \tau^{\geq\ell+1}(X).$$
These triangles will be used throughout to reduce statements about arbitrary objects to statements about their cohomology objects in the heart.

\begin{lem}\label{lem0.2} {\rm (\cite[Lemma 2.5]{Y1})} For $P\in\mathcal{P}$ and $I\in\mathcal{I}$, $X\in\mathcal{T}$ and $\ell\in\mathbb{Z}$, one has $$\mathrm{Hom}_\mathcal{T}(P[-\ell],X)\cong\mathrm{Hom}_{\mathcal{T}}(P,\mathcal{H}^\ell_M(X))
\cong\mathrm{Hom}_{\mathcal{H}_M}(\mathcal{H}^0_M(P),\mathcal{H}^\ell_M(X)),$$
$$\mathrm{Hom}_\mathcal{T}(X,I[-\ell])\cong\mathrm{Hom}_{\mathcal{T}}(\mathcal{H}^\ell_M(X),I)
\cong\mathrm{Hom}_{\mathcal{H}_M}(\mathcal{H}^\ell_M(X),\mathcal{H}^0_M(I)).$$
\end{lem}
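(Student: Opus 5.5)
The plan is to prove the two chains of isomorphisms separately. In each chain the first isomorphism comes from the truncation triangles and the $t$-structure orthogonality. The second comes from the fact that $\mathcal{H}^0_M(P)$ and $\mathcal{H}^0_M(I)$ are the images of $P$ and $I$ in the heart. Throughout I use that $P\in\mathcal{P}=\mathrm{Add}(M)$ lies in $\mathcal{T}^{\leq 0}$, since $\mathrm{Hom}(M,M[i])=0$ for $i>0$ by the silting assumption. I also use that, by definition, $\mathrm{Hom}_\mathcal{T}(P,Y[i])=0$ for $Y\in\mathcal{T}^{\geq 1}=\mathcal{P}^{\perp_{\leq 0}}$ and $i\leq 0$.

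\textbf{First chain.} Shift so that $\ell=0$: replace $X$ by $X[\ell]$ and note $\mathcal{H}^\ell_M(X)=\mathcal{H}^0_M(X[\ell])$. Apply $\mathrm{Hom}(P,-)$ to $\tau^{\leq 0}X\to X\to\tau^{\geq 1}X\to$. Since $\tau^{\geq1}X$ and $\tau^{\geq 1}X[-1]$ lie in $\mathcal{P}^{\perp_{\leq 0}}$, we get $\mathrm{Hom}(P,X)\cong\mathrm{Hom}(P,\tau^{\leq 0}X)$. Next apply $\mathrm{Hom}(P,-)$ to $\tau^{\leq -1}X\to\tau^{\leq 0}X\to\mathcal{H}^0_M(X)\to\tau^{\leq -1}X[1]$. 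Here $\tau^{\leq -1}X[1]\in\mathcal{T}^{\leq -2}$ and $\tau^{\leq -1}X\in\mathcal{T}^{\leq -1}$. By definition, $\mathcal{T}^{\leq n}=\mathcal{P}^{\perp_{>n}}$, so $\mathrm{Hom}(P,Z)=0$ for $Z\in\mathcal{T}^{\leq -1}$. This gives $\mathrm{Hom}(P,\tau^{\leq 0}X)\cong\mathrm{Hom}(P,\mathcal{H}^0_M(X))$. The step I must check carefully is the indexing convention of $\mathcal{T}^{\leq n}$ versus shifts, so that these vanishings are correctly placed. For the second isomorphism, $\mathcal{H}^0_M(X)$ lies in the heart $\mathcal{T}^{\geq 0}\cap\mathcal{T}^{\leq 0}$. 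For $P\in\mathcal{T}^{\leq 0}$, the adjunction of $\tau^{\geq 0}$ gives $\mathrm{Hom}(P,H)\cong\mathrm{Hom}(\tau^{\geq 0}P,H)$ for any $H$ in the heart. Also $\tau^{\geq 0}P=\mathcal{H}^0_M(P)$ because $P\in\mathcal{T}^{\leq 0}$. Finally, the heart is a full subcategory, which gives $\mathrm{Hom}_{\mathcal{H}_M}(\mathcal{H}^0_M(P),\mathcal{H}^0_M(X))$.

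\textbf{Second chain.} This is the dual argument. First I need $I\in\mathcal{I}=\mathrm{Prod}(M^\ast)$ to lie in $\mathcal{T}^{\geq 0}$. By the defining property of the Brown--Comenetz dual, $\mathrm{Hom}(M[-i],M^\ast)\cong\mathrm{Hom}_\mathbb{Z}(\mathrm{Hom}(M,M[-i]),\mathbb{Q}/\mathbb{Z})$, which vanishes for $i\leq -1$, that is, for negative shifts of $M$ beyond degree $0$. Products preserve this vanishing, since $\mathcal{P}^{\perp_\Theta}$ is closed under products. I also need $\mathrm{Hom}(Z,I)=0$ for $Z\in\mathcal{T}^{\geq 1}$. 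This again follows from the duality formula, because $M$ is compact. Indeed, $\mathrm{Hom}(Z,M^\ast)=\mathrm{Hom}_\mathbb{Z}(\mathrm{Hom}(M,Z),\mathbb{Q}/\mathbb{Z})=0$ for $Z\in\mathcal{T}^{\geq1}$, and the same holds for shifts. This dual vanishing is the main point to verify.

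With these two facts in hand, the argument mirrors the first chain. Apply $\mathrm{Hom}(-,I)$ to the triangle $\tau^{\leq 0}X\to X\to\tau^{\geq1}X\to$ with $\ell=0$. This shows $\mathrm{Hom}(X,I)\cong\mathrm{Hom}(\tau^{\leq 0}X,I)$. Then the triangle $\tau^{\leq-1}X\to\tau^{\leq0}X\to\mathcal{H}^0_M(X)\to$, combined with the vanishing $\mathrm{Hom}(\mathcal{T}^{\leq -1},\mathcal{T}^{\geq 0})=0$, gives $\mathrm{Hom}(\tau^{\leq0}X,I)\cong\mathrm{Hom}(\mathcal{H}^0_M(X),I)$. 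Lastly, $\mathrm{Hom}(H,I)\cong\mathrm{Hom}(H,\tau^{\leq0}I)=\mathrm{Hom}(H,\mathcal{H}^0_M(I))$ for $H$ in the heart, using $I\in\mathcal{T}^{\geq0}$. Restoring the shift by $\ell$ gives the stated formulas.
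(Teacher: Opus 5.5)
Your proof is correct. The paper gives no argument of its own for this lemma; it only quotes \cite[Lemma 2.5]{Y1}, and what you wrote is the standard truncation argument behind that citation. The indexing you flagged checks out: $\mathcal{T}^{\geq 1}=\mathcal{P}^{\perp_{\leq 0}}$ and $\mathcal{T}^{\leq -1}=\mathcal{P}^{\perp_{\geq 0}}$ give exactly the vanishings you use, namely $\mathrm{Hom}_\mathcal{T}(P,\tau^{\geq 1}X[j])=0$ for $j\leq 0$ and $\mathrm{Hom}_\mathcal{T}(P,\tau^{\leq -1}X[j])=0$ for $j\geq 0$.

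For the second chain, you correctly obtain both $I\in\mathcal{T}^{\geq 0}$ and $\mathrm{Hom}_\mathcal{T}(\mathcal{T}^{\geq 1},I)=0$ from $\mathrm{Hom}_\mathcal{T}(Z,M^\ast)\cong\mathrm{Hom}_\mathbb{Z}(\mathrm{Hom}_\mathcal{T}(M,Z),\mathbb{Q}/\mathbb{Z})$, together with closure under products and summands. The same formula also gives $\mathrm{Hom}_\mathcal{T}(\mathcal{T}^{\leq -1},M^\ast)=0$ directly, so this chain is in effect the $\mathbb{Q}/\mathbb{Z}$-dual of the first one for $P=M$.
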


This lemma is fundamental: it allows us to compute
$\mathrm{Hom}_\mathcal{T}$ between
$\mathcal{P}$ (or
$\mathcal{I}$) and any object
$X$ entirely in terms of the abelian category $\mathcal{H}_M$.

Following \cite{Mi18,Y1}, for $Z\in\mathcal{T}$ define $$\mathrm{sup}Z=\mathrm{sup}\{\ell\in\mathbb{Z}\hspace{0.03cm}|\hspace{0.03cm}\mathcal{H}^\ell_M(Z)\neq0\}, \ \mathrm{inf}Z=\mathrm{inf}\{\ell\in\mathbb{Z}\hspace{0.03cm}|\hspace{0.03cm}\mathcal{H}^\ell_M(Z)\neq0\}$$
and $\mathrm{amp}Z=\mathrm{sup}Z-\mathrm{inf}Z$.
Let $X\in\mathcal{T}^{-}$. Then $\tau^{\leq s}(X)\cong X$ where $s=\mathrm{sup}X$.  Lemma \ref{lem0.2} yields $\mathrm{Hom}_\mathcal{T}(\bar{P}[-s],X)\cong
\mathrm{Hom}_{\mathcal{H}_M}(\mathcal{H}^0_M(\bar{P}),\mathcal{H}^s_M(X))$ for $\bar{P}\in\mathcal{P}$, hence there is a morphism $f:\bar{P}[-s]\rightarrow X$ inducing a surjection on $\mathcal{H}^s_M$. Such an $f$ is called a \emph{sup-projective (sppj) morphism}. Dually,
for $Y\in\mathcal{T}^{+}$, there exists a morphism $g:Y\rightarrow I$ with $I\in\mathcal{I}[-\mathrm{inf}Y]$ inducing an injection on $\mathcal{H}^{\mathrm{inf}Y}_M$.  such a $g$ is called an \emph{inf-injective (ifj) morphism}.

Following \cite{C}, an exact triangle $X\rightarrow Y\rightarrow Z\stackrel{g}\rightarrow X[1]$ is called \emph{$M$-ghost} if $$0\rightarrow\mathrm{Hom}_\mathcal{T}(M[i],X)\rightarrow\mathrm{Hom}_\mathcal{T}(M[i],Y)\rightarrow\mathrm{Hom}_\mathcal{T}(M[i],Z)\rightarrow0$$ is exact, equivalently if $0\rightarrow\mathrm{Hom}_\mathcal{T}(Z,M^\ast[i])\rightarrow\mathrm{Hom}_\mathcal{T}(Y,M^\ast[i])
\rightarrow\mathrm{Hom}_\mathcal{T}(X,M^\ast[i])\rightarrow0$ is exact for all $i\in\mathbb{Z}$. We call $g$ an \emph{$M$-ghost}, and denote $\mathcal{G}_M$ the ideal of $M$-ghosts, and $\mathcal{G}^n_M$ consists of the $n$-fold composites of $M$-ghost maps for $n\geq1$.

\begin{lem}\label{lem2.0} {\rm (\cite[Lemma 4.3]{Y1})} An exact triangle $X\rightarrow Y\rightarrow Z\rightarrow X[1]$ is $M$-ghost if and only if $0\rightarrow\mathcal{H}^\ell_M(X)\rightarrow\mathcal{H}^\ell_M(Y)\rightarrow\mathcal{H}^\ell_M(Z)\rightarrow0$  is exact in $\mathcal{H}_M$ for all $\ell\in\mathbb{Z}$.
\end{lem}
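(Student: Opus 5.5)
The plan is to reduce both directions to Lemma \ref{lem0.2}, which identifies $\mathrm{Hom}_\mathcal{T}(M[i],-)$ with a Hom in the heart. First I need the relation between $M[i]$ and $M[-\ell]$. Since $M\in\mathcal{P}$, Lemma \ref{lem0.2} with $P=M$ gives a natural isomorphism
$$\mathrm{Hom}_\mathcal{T}(M[-\ell],W)\cong\mathrm{Hom}_{\mathcal{H}_M}(\mathcal{H}^0_M(M),\mathcal{H}^\ell_M(W))$$
for every $W\in\mathcal{T}$ and every $\ell\in\mathbb{Z}$. As $i$ runs over $\mathbb{Z}$, so does $\ell=-i$, so exactness of the $\mathrm{Hom}_\mathcal{T}(M[i],-)$-sequences is the same as exactness of the sequences $\mathrm{Hom}_{\mathcal{H}_M}(\mathcal{H}^0_M(M),\mathcal{H}^\ell_M(-))$ for all $\ell$.

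The key intermediate claim is that $\mathcal{H}^0_M(M)$ is a projective generator of $\mathcal{H}_M$. Projectivity should follow from Lemma \ref{lem0.2}. For a short exact sequence in $\mathcal{H}_M$, I complete it to a triangle in $\mathcal{T}$, using that short exact sequences in a heart are exactly the triangles with all three terms in the heart. The long exact $\mathrm{Hom}_\mathcal{T}(M,-)$-sequence then has the boundary terms $\mathrm{Hom}_\mathcal{T}(M,A[1])$ and $\mathrm{Hom}_\mathcal{T}(M,C[-1])$, which vanish because $A,C\in M^{\bot_{\neq0}}$. Hence $\mathrm{Hom}(\mathcal{H}^0_M(M),-)\cong\mathrm{Hom}_\mathcal{T}(M,-)$ is exact on $\mathcal{H}_M$.

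The generator property says that $\mathrm{Hom}_{\mathcal{H}_M}(\mathcal{H}^0_M(M),N)=0$ forces $N=0$. For $N\in\mathcal{H}_M$ we have $\mathrm{Hom}_\mathcal{T}(M,N[i])=0$ for all $i\neq 0$, and now also for $i=0$. Since $M$ is a generator (equivalently, the $t$-structure is nondegenerate, as $\mathcal{T}^{\geq n+1}=\mathcal{P}^{\perp_{\leq n}}$ for all $n$ forces $N=0$), we conclude $N=0$. The same fact makes $\mathrm{Hom}(\mathcal{H}^0_M(M),-)$ faithful.

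With this in hand, take a triangle $X\to Y\to Z\to X[1]$ and apply $\mathcal{H}^\bullet_M$ to get the long exact cohomology sequence in $\mathcal{H}_M$. By the isomorphism of the first paragraph, the triangle is $M$-ghost exactly when $\mathrm{Hom}(\mathcal{H}^0_M(M),-)$ carries every piece $\mathcal{H}^\ell_M(X)\to\mathcal{H}^\ell_M(Y)\to\mathcal{H}^\ell_M(Z)$ to a short exact sequence. The two directions then go as follows.

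\textbf{($\Leftarrow$)} If each $0\to\mathcal{H}^\ell_M(X)\to\mathcal{H}^\ell_M(Y)\to\mathcal{H}^\ell_M(Z)\to0$ is exact, applying the exact functor $\mathrm{Hom}(\mathcal{H}^0_M(M),-)$ gives exactness of the $\mathrm{Hom}_\mathcal{T}(M[i],-)$-sequences.

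\textbf{($\Rightarrow$)} Exactness of the $\mathrm{Hom}_\mathcal{T}(M[i],-)$-sequences says that each connecting map $\mathcal{H}^{\ell-1}_M(Z)\to\mathcal{H}^\ell_M(X)$ is killed by $\mathrm{Hom}(\mathcal{H}^0_M(M),-)$. This is because the Hom long exact sequence is the image of the cohomology long exact sequence under this functor, so a map zero under it must have zero image there. By faithfulness each connecting map is zero, and the cohomology long exact sequence splits into the required short exact sequences.

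The main obstacle I expect is the generator/faithfulness step. It relies on nondegeneracy of the $t$-structure, specifically $\bigcap_n\mathcal{T}^{\geq n}=0$, which comes from $M$ being a compact generator. I would verify this first from $(\mathcal{T}^c)^\perp=0$ together with $\mathrm{thick}(M)=\mathcal{T}^c$ for a compact silting generator.
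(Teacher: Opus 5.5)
Your argument is correct and follows the standard route (the paper itself does not reprove this and only cites \cite{Y1}). Via Lemma \ref{lem0.2}, $\mathrm{Hom}_\mathcal{T}(M[-\ell],-)$ becomes $\mathrm{Hom}_{\mathcal{H}_M}(\mathcal{H}^0_M(M),\mathcal{H}^\ell_M(-))$, and because $\mathcal{H}^0_M(M)$ is a projective generator of $\mathcal{H}_M$, the connecting maps vanish on one side exactly when they vanish on the other.

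The step you flag as the main obstacle is in fact immediate. Any $N\in M^{\perp_{\mathbb{Z}}}$ lies in $\mathcal{T}^{\leq 0}\cap\mathcal{T}^{\geq 1}$, so $\mathrm{id}_N=0$ by the orthogonality built into the silting $t$-structure, and no appeal to $\mathrm{thick}(M)=\mathcal{T}^c$ is needed.
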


\begin{lem}\label{lem1.00} Let $X$ be an object in $\mathcal{T}$.

$(1)$ There exists an $M$-ghost triangle $Y\rightarrow P\rightarrow X\rightarrow Y[1]$ such that $P\in\langle\mathcal{P}\rangle_{1}$.

$(2)$ There exists an $M$-ghost triangle $X\rightarrow I\rightarrow Z\rightarrow X[1]$ such that $I\in\langle\mathcal{I}\rangle_{1}$.
\end{lem}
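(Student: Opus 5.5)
For (1), I take the standard "universal" map from a coproduct of shifted copies of $M$. Set
\[
P=\coprod_{i\in\mathbb{Z}}\ \coprod_{f\in \mathrm{Hom}_\mathcal{T}(M[i],X)} M[i],
\]
and let $p:P\rightarrow X$ be the morphism whose component on the summand indexed by $f$ is $f$ itself. Then $P\in\langle\mathcal{P}\rangle_1$, since $\langle\mathcal{P}\rangle_1$ is closed under coproducts of shifts of objects of $\mathcal{P}$. Complete $p$ to an exact triangle $Y\rightarrow P\stackrel{p}\rightarrow X\stackrel{g}\rightarrow Y[1]$.

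The key step is to check that $\mathrm{Hom}_\mathcal{T}(M[i],p)$ is surjective for every $i$. This holds by construction: any $f:M[i]\rightarrow X$ factors as $M[i]\rightarrow P\stackrel{p}\rightarrow X$ through the inclusion of the summand indexed by $f$. Now apply $\mathrm{Hom}_\mathcal{T}(M[i],-)$ to the triangle to get a long exact sequence. Surjectivity of $\mathrm{Hom}(M[i],P)\rightarrow\mathrm{Hom}(M[i],X)$ for all $i$ forces the connecting maps $\mathrm{Hom}(M[i],X)\rightarrow\mathrm{Hom}(M[i],Y[1])$ to vanish. Equivalently, $\mathrm{Hom}(M[i],g)=0$, and by exactness $\mathrm{Hom}(M[i],Y)\rightarrow\mathrm{Hom}(M[i],P)$ is injective for all $i$. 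Hence the short sequences $0\rightarrow\mathrm{Hom}(M[i],Y)\rightarrow\mathrm{Hom}(M[i],P)\rightarrow\mathrm{Hom}(M[i],X)\rightarrow0$ are exact, so the triangle is $M$-ghost. Compactness of $M$ is not actually needed here, since surjectivity comes directly from the universal construction.

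For (2), I dualize using the Brown--Comenetz dual and the second characterization of $M$-ghost triangles, namely exactness of $\mathrm{Hom}_\mathcal{T}(-,M^\ast[i])$. Set
\[
I=\prod_{i\in\mathbb{Z}}\ \prod_{h\in\mathrm{Hom}_\mathcal{T}(X,M^\ast[i])}M^\ast[i],
\]
which exists because $\mathcal{T}$ is compactly generated and therefore has products by Brown representability. Let $j:X\rightarrow I$ be the canonical map and complete to a triangle $X\stackrel{j}\rightarrow I\rightarrow Z\rightarrow X[1]$. Every $h:X\rightarrow M^\ast[i]$ factors through $j$ via the projection onto the factor indexed by $h$, so $\mathrm{Hom}(j,M^\ast[i])$ is surjective for all $i$. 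The same long exact sequence argument as in (1), now contravariant, gives short exact sequences $0\rightarrow\mathrm{Hom}(Z,M^\ast[i])\rightarrow\mathrm{Hom}(I,M^\ast[i])\rightarrow\mathrm{Hom}(X,M^\ast[i])\rightarrow0$, so the triangle is $M$-ghost by the stated equivalence.

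The remaining point, which I expect to be the only real obstacle, is the membership $I\in\langle\mathcal{I}\rangle_1$. As literally defined, $\langle-\rangle_1$ uses $\mathrm{Add}$, whereas $I$ is a product. I would read $\langle\mathcal{I}\rangle_1$ dually as $\mathrm{Prod}\{M^\ast[i]\}$, consistent with $\mathcal{I}=\mathrm{Prod}(M^\ast)$. To bring the shifts inside, I note that $M^\ast[i]\cong(M[-i])^\ast$, which is immediate from the representing property and lies in $\mathcal{I}[i]$. Therefore $I\in\prod_i \mathcal{I}[i]$, which is the dual of $\langle\mathcal{I}\rangle_1$, and this completes (2).
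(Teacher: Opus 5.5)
Your proof is correct and builds the same object as the paper: a coproduct of shifts of objects of $\mathcal{P}$ mapping to $X$, and dually a product of shifts of objects of $\mathcal{I}$. The difference is in how you verify the ghost condition.

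The paper uses Lemma \ref{lem0.2} to pick $M_i\in\mathcal{P}$ with $M_i\rightarrow\mathcal{H}^i_M(X)$ an epimorphism in the heart. It then assembles $\coprod_i M_i[-i]\rightarrow X$ and gets the $M$-ghost property from surjectivity on every $\mathcal{H}^\ell_M$, via the heart criterion of Lemma \ref{lem2.0}. You instead take the tautological map indexed by $\coprod_i\mathrm{Hom}_\mathcal{T}(M[i],X)$ and check the defining Hom-sequences directly.

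Your route is more elementary. It never uses the silting $t$-structure, and in (1) it does not even need compactness of $M$. The paper's phrasing makes the short exact sequences $0\rightarrow\mathcal{H}^\ell_M(Y)\rightarrow\mathcal{H}^\ell_M(P)\rightarrow\mathcal{H}^\ell_M(X)\rightarrow0$ exploited later (e.g. in Theorem \ref{lem5.2}) visible at once. Those sequences follow equally from your version by Lemma \ref{lem2.0}.

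For (2) the paper only says ``dual''. You note that the object produced is a product, so $\langle\mathcal{I}\rangle_1$ must be read with $\mathrm{Prod}$ rather than $\mathrm{Add}$. This matches how the paper itself uses $\langle\mathcal{I}\rangle_1$ in the proof of Theorem \ref{lem5.5}, where a product of shifted objects of $\mathcal{I}$ is placed in $\langle\mathcal{I}\rangle_1$.

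One small slip: $M^\ast[i]\cong(M[i])^\ast$, not $(M[-i])^\ast$. Indeed,
$\mathrm{Hom}_\mathcal{T}(W,M^\ast[i])\cong\mathrm{Hom}_\mathcal{T}(W[-i],M^\ast)\cong\mathrm{Hom}_{\mathbb{Z}}(\mathrm{Hom}_\mathcal{T}(M,W[-i]),\mathbb{Q}/\mathbb{Z})\cong\mathrm{Hom}_{\mathbb{Z}}(\mathrm{Hom}_\mathcal{T}(M[i],W),\mathbb{Q}/\mathbb{Z})$.
This does not affect the argument, since $M^\ast[i]\in\mathcal{I}[i]$ follows directly from $M^\ast\in\mathcal{I}$.
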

\begin{proof} We just prove the statement (1) since (2) is dual.

By Lemma \ref{lem0.2}, there is a morphism $a_i:M_i\rightarrow\mathcal{H}^i_M(Z)$ with $M_i\in\mathcal{P}$ for $i\in\mathbb{Z}$ such that $\mathcal{H}^0_M(a_i)$ is surjective, it follows that $$\mathrm{Hom}_\mathcal{T}(\coprod_{i\in\mathbb{Z}}M_i[-i],Z)\cong\prod_{i\in\mathbb{Z}}\mathrm{Hom}_\mathcal{T}(M_i[-i],Z)\cong
\prod_{i\in\mathbb{Z}}\mathrm{Hom}_\mathcal{T}(M_i,\mathcal{H}^i_M(Z)).$$As  $\mathcal{H}^\ell_M(\coprod_{i\in\mathbb{Z}}M_i[-i])\rightarrow\mathcal{H}^0_M(M_\ell)$ is surjective for all $\ell\in\mathbb{Z}$, it follows from the above isomorphism and Lemma \ref{lem2.0} that $$Y\rightarrow\coprod_{i\in\mathbb{Z}}M_i[-i]\rightarrow X\rightarrow Y[1]$$ is an $M$-ghost triangle with $P=\coprod_{i\in\mathbb{Z}}M_i[-i]\in\langle\mathcal{P}\rangle_{1}$.
\end{proof}

Following \cite{K1}, a \emph{cone} over a diagram \(\{X_\lambda\}_{\lambda\in\Lambda}\) in \(\mathcal{T}\) is a family \(\{\sigma_\lambda:X_\lambda\to X\}\) compatible with the diagram. A \emph{weak colimit} is a universal cone. A \emph{homology colimit} is a cone such that
$\textrm{colim}\mathrm{Hom}_{\mathcal{T}}(C,X_\lambda)\rightarrow \mathrm{Hom}_{\mathcal{T}}(C,X)$
is an isomorphism for all $C\in\mathcal{T}^c$.

We now recall the tensor structure and the related notion of purity, both of which are essential for the definition of flat dimension.

\vspace{2mm}
{\bf Tensor triangulated categories.} Let $(\mathcal{T},\otimes, {\i})$ be a tensor triangulated category as defined in \cite[\S 8]{BIK}. In particular, $\mathcal{T}$ is a compactly generated triangulated category with a symmetric monoidal
structure, $\otimes$ is its tensor product and ${\i}$ the unit of the tensor product. The tensor
product is exact in each variable and preserves coproducts.

For objects $X,Y,Z$ in $\mathcal{T}$, there is a function object $\mathcal{H}\mathrm{om}(X,Y)$ satisfying
\begin{align}
\mathrm{Hom}_\mathcal{T}(X\otimes Z,Y)\cong\mathrm{Hom}_\mathcal{T}(Z,\mathcal{H}\mathrm{om}(X,Y))
\label{exact03}
\tag{$\ast$}\end{align}by Brown representability.
Moreover, there exists a natural isomorphism
\begin{align}
\mathcal{H}\mathrm{om}(X\otimes Z,Y)\cong\mathcal{H}\mathrm{om}(Z,\mathcal{H}\mathrm{om}(X,Y)).
\label{exact03}
\tag{$\ast\ast$}\end{align}

 Assume that the tensor unit ${\i}$ is compact and that all compact objects
$C$ are \emph{strongly dualizable} in the sense that the canonical morphism
\begin{center}
$C^\vee\otimes X\rightarrow\mathcal{H}\mathrm{om}(C,X)$
\end{center}
is an isomorphism for all $X$ in $\mathcal{T}$, where $X^\vee:=\mathcal{H}\mathrm{om}(X,{\i})$.

Let $C, D$ be compact objects in $\mathcal{T}$. The following properties are easily verified.

(i) ${\i}^\vee\cong{\i}$, $C^{\vee\vee}\cong C$, $C^\vee$ and $C\otimes D$ are compact;

(ii) $\mathrm{Hom}_\mathcal{T}(X\otimes C^\vee,Y)\cong\mathrm{Hom}_\mathcal{T}(X,C\otimes Y)$.

Following \cite{B,K}, we call an exact triangle $X\stackrel{f}\rightarrow Y\stackrel{g}\rightarrow Z\rightarrow X[1]$ \emph{pure} if for all $C\in\mathcal{T}^c$, the induced sequence \begin{center}$0\rightarrow\mathrm{Hom}_{\mathcal{T}}(C,X)\rightarrow\mathrm{Hom}_{\mathcal{T}}(C,Y)\rightarrow
\mathrm{Hom}_{\mathcal{T}}(C,Z)\rightarrow0$\end{center} is exact. In this case, $f$ is a \emph{pure monomorphism} and $g$ is a \emph{pure epimorphism}. An object $X$ in $\mathcal{T}$ is called \emph{pure-injective} if every pure monomorphism
$X\rightarrow Y$ splits.

\begin{df}\label{lem:0.1} {\rm (\cite{BW}) A \emph{duality triple} $(\mathcal{T}, \mathcal{S}, Q)$ consists of two compactly generated triangulated categories $\mathcal{T}$ and $\mathcal{S}$, together with exact contravariant functors $Q: \mathcal{T}^{\mathrm{op}} \to \mathcal{S}$ and $Q: \mathcal{S}^{\mathrm{op}} \to \mathcal{T}$, satisfying the following conditions:
  \begin{enumerate}
    \item $Q(X)$ is pure injective for any $X \in \mathcal{T}$ or $\mathcal{S}$;
    \item for any $X \in \mathcal{T}$ or $\mathcal{S}$, there is a natural pure monomorphism $i_X: X \to Q^2(X)$;
    \item for any $X \in \mathcal{T}$ or $\mathcal{S}$, the composite $Q(i_X) \circ i_{Q(X)}$ is the identity;
    \item for any set of objects $\{X_i\}_{i \in I}$ in $\mathcal{T}$ or $\mathcal{S}$, we have  $Q\left(\coprod_{i \in I} X_i\right) \simeq \prod_{i \in I} Q(X_i)$.
  \end{enumerate}}
\end{df}

Let \( \mathcal{T} \) be a tensor triangulated category, \( C \in \mathcal{T}^c \) and \( X \in \mathcal{T} \). The \emph{Brown-Comenetz dual} of \( X \) with respect to \( C \) is the unique object, denoted by $\mathbb{I}_C(X)$,
which represents the cohomological functor
$\operatorname{Hom}_{\mathbb{Z}}(\operatorname{Hom}_{\mathcal{T}}(C,X \otimes-), \mathbb{Q}/\mathbb{Z})$, i.e.,
$$\operatorname{Hom}_{\mathbb{Z}}(\operatorname{Hom}_{\mathcal{T}}(C,X \otimes-), \mathbb{Q}/\mathbb{Z})\cong\operatorname{Hom}_{\mathcal{T}}(-,\mathbb{I}_C(X)).$$

\begin{rem}\label{lem:3.2}{\rm (1) Let $\mathcal{T}$ be a compactly generated tensor triangulated category. By \cite[Proposition 3.13]{BW}, the triple $(\mathcal{T}, \mathcal{T}, \mathbb{I})$ is a duality triple, where $\mathbb{I}=\mathbb{I}_{{\i}}({\i})$.

(2) For $C\in\mathcal{T}^c$, one has the following isomorphism
\begin{center}$\operatorname{Hom}_{\mathcal{T}}(-,\mathbb{I}_{{\i}}(C))
\cong\operatorname{Hom}_{\mathbb{Z}}(\operatorname{Hom}_{\mathcal{T}}(C^\vee,-), \mathbb{Q}/\mathbb{Z})\cong\operatorname{Hom}_{\mathcal{T}}(-,(C^\vee)^*)$,\end{center}
it implies that $\mathbb{I}_{{\i}}(C)\cong(C^\vee)^*$. In  particular, $\mathbb{I}\cong{\i}^*$. Hence $\mathbb{I}_{{\i}}(X)\cong\mathcal{H}\mathrm{om}(X,\mathbb{I})
\cong\mathcal{H}\mathrm{om}(X,{\i}^*)$ for any $X\in\mathcal{T}$
by \cite[Lemma 3.12]{BW}.

(3) Let $M$ be a compact silting generator. If $M^\vee\in\mathrm{add}(M)$, then $\mathbb{I}_{{\i}}(M)\in\mathcal{I}$, so $\mathbb{I}_{{\i}}(P)\in\mathcal{I}$ for $P\in\mathcal{P}$. In particular,
if $\mathrm{add}(M)=\mathrm{add}({\i})$, then $M^\vee\in\mathrm{add}(M)$ and so $\mathbb{I}_{{\i}}(\mathcal{P})\subseteq\mathcal{I}$. This fact will be used frequently.}
\end{rem}

\begin{exa}\label{lem:2.2} {\rm
(1) Let $R$ be a ring. The derived category $\mathrm{D}(R)$ is
 a compactly generated tensor triangulated category, whose tensor unit
$R$ is also a compact tilting generator, and $\mathbb{I}_{R}(X)\cong\mathrm{Hom}_R(X,\mathbb{Q}/\mathbb{Z})$ for $X\in\mathrm{D}(R)$.

(2) Let $A$ be a non-positive dg ring. The derived category $\mathrm{D}(A)$ is a compactly generated tensor triangulated category,  in which
$A$  serves both as the tensor unit and as a compact silting generator, and $\mathbb{I}_{A}(X)\cong\mathrm{Hom}_A(X,\mathbb{Q}/\mathbb{Z})$ for $X\in\mathrm{D}(A)$.

(3) Let $X = \mathbb{P}^1_k$ be the projective line over a field $k$ and $A = kQ$ the path algebra of the Kronecker quiver $1 \rightrightarrows 2$.  It is well known that
\[
\mathrm{D}^b(\operatorname{Coh} \mathbb{P}^1_k) \simeq \mathrm{D}^b(\operatorname{mod} A)
\]
 as triangulated categories. Consequently,  $\mathrm{D}^b(\operatorname{Coh} \mathbb{P}^1_k)$ is compactly generated and $M = \mathcal{O}(-1) \oplus \mathcal{O}$ is a compact tilting generator.  However, the tensor unit for the standard tensor structure on $\mathrm{D}^b(\operatorname{Coh}\mathbb{P}^1_k)$ is $\mathcal{O}$.}
\end{exa}

Recall that a morphism $f$ in $\mathcal{T}$ is called \emph{phantom} if $\mathrm{Hom}_\mathcal{T}(C,f)=0$ for all $C\in\mathcal{T}^c$.

\begin{lem}\label{lem1.22} {\rm (\cite[Theorem 3.6]{BW}, \cite[Lemma 1.3]{K})} Let $\mathcal{T}$ be a tensor triangulated category and $X\rightarrow Y\rightarrow Z\stackrel{\Phi}\rightarrow X[1]$ an exact triangle. The triangle is pure exact if and only if $\Phi$ is phantom if and only if $\mathbb{I}_{{\i}}(\Phi)=0$.
\end{lem}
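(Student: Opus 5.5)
The plan is to prove the two equivalences in turn: first that the triangle is pure exact exactly when $\Phi$ is phantom, and then that $\Phi$ is phantom exactly when $\mathbb{I}_{{\i}}(\Phi)=0$.

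\emph{Pure exact $\Leftrightarrow$ $\Phi$ phantom.} For each $C\in\mathcal{T}^c$, apply $\mathrm{Hom}_\mathcal{T}(C,-)$ to the triangle. This gives the long exact sequence
\[\cdots\to\mathrm{Hom}_\mathcal{T}(C,Z[-1])\xrightarrow{\Phi[-1]_*}\mathrm{Hom}_\mathcal{T}(C,X)\to\mathrm{Hom}_\mathcal{T}(C,Y)\to\mathrm{Hom}_\mathcal{T}(C,Z)\xrightarrow{\Phi_*}\mathrm{Hom}_\mathcal{T}(C,X[1])\to\cdots\]
The short sequence at $C$ is exact if and only if $\Phi_*=0$ on $\mathrm{Hom}_\mathcal{T}(C,Z)$ and $\Phi[-1]_*=0$ on $\mathrm{Hom}_\mathcal{T}(C,Z[-1])$. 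The second condition is the same as $\mathrm{Hom}_\mathcal{T}(C[1],\Phi)=0$. Since $\mathcal{T}^c$ is closed under shifts, requiring this for all compact $C$ is exactly the statement that $\Phi$ is phantom.

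\emph{$\Phi$ phantom $\Leftrightarrow$ $\mathbb{I}_{{\i}}(\Phi)=0$.} By Remark \ref{lem:3.2}(2), $\mathbb{I}_{{\i}}(-)\cong\mathcal{H}\mathrm{om}(-,{\i}^*)$, so for any $W$,
\[\mathrm{Hom}_\mathcal{T}(W,\mathbb{I}_{{\i}}(X))\cong\mathrm{Hom}_\mathcal{T}(W\otimes X,{\i}^*)\cong\mathrm{Hom}_\mathbb{Z}(\mathrm{Hom}_\mathcal{T}({\i},W\otimes X),\mathbb{Q}/\mathbb{Z}),\]
naturally in both variables. The map $\mathbb{I}_{{\i}}(\Phi):\mathbb{I}_{{\i}}(X[1])\to\mathbb{I}_{{\i}}(Z)$ vanishes if and only if $\mathrm{Hom}_\mathcal{T}(W,\mathbb{I}_{{\i}}(\Phi))=0$ for all $W$. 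Because $\mathbb{Q}/\mathbb{Z}$ is an injective cogenerator, this holds if and only if
\[\mathrm{Hom}_\mathcal{T}({\i},W\otimes\Phi):\mathrm{Hom}_\mathcal{T}({\i},W\otimes Z)\to\mathrm{Hom}_\mathcal{T}({\i},W\otimes X[1])\]
is zero for all $W\in\mathcal{T}$.

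($\Leftarrow$) Take $W=C^\vee$ with $C$ compact. By property (ii) and $C^{\vee\vee}\cong C$, we get $\mathrm{Hom}_\mathcal{T}({\i},C^\vee\otimes-)\cong\mathrm{Hom}_\mathcal{T}(C,-)$. So the vanishing condition for all such $W$ gives exactly that $\Phi$ is phantom.

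($\Rightarrow$) This is the main obstacle: we must show that $W\otimes\Phi$ is phantom for arbitrary $W$, not just compact $W$. I would argue as follows.
\begin{enumerate}
\item If $W$ is compact, then $\mathrm{Hom}_\mathcal{T}(C,W\otimes\Phi)\cong\mathrm{Hom}_\mathcal{T}(C\otimes W^\vee,\Phi)=0$, because $C\otimes W^\vee$ is compact.
\item Write a general $W$ as a homology colimit of compacts, via the canonical cone $\{C_\lambda\to W\}$ over the category of compacts mapping to $W$; this is available in compactly generated categories (Krause).
\item Tensoring preserves coproducts and is exact, so $W\otimes X$ is again a homology colimit of the $C_\lambda\otimes X$. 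Hence $\mathrm{Hom}_\mathcal{T}({\i},W\otimes\Phi)=\mathrm{colim}\,\mathrm{Hom}_\mathcal{T}({\i},C_\lambda\otimes\Phi)=0$, using that ${\i}$ is compact.
\end{enumerate}
If the homology-colimit compatibility with $\otimes$ is delicate, I would instead use Krause's characterization of phantoms as maps $f$ with $\widehat{f}=0$ in the module category over $\mathcal{T}^c$. The restricted Yoneda functor is compatible with the induced tensor structure, so $\widehat{W\otimes\Phi}$ factors through $\widehat{\Phi}=0$. Alternatively, one can simply cite \cite[Theorem 3.6]{BW} for this direction, since it is exactly the statement that phantoms form a tensor ideal.
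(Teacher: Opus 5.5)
The paper gives no argument of its own for this lemma; it simply cites \cite[Theorem 3.6]{BW} and \cite[Lemma 1.3]{K}. Several parts of your proof are correct and complete:
the proof that the triangle is pure exact if and only if $\Phi$ is phantom;
the reduction of $\mathbb{I}_{{\i}}(\Phi)=0$ to the vanishing of $\mathrm{Hom}_\mathcal{T}({\i},W\otimes\Phi)$ for all $W$;
and the implication $\mathbb{I}_{{\i}}(\Phi)=0\Rightarrow\Phi$ phantom, via $W=C^\vee$.

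The gap is in the converse, at Step 3. That $-\otimes X$ is exact and preserves coproducts does not imply that it sends $W=\mathrm{hcolim}\,C_\lambda$ to a homology colimit of the $C_\lambda\otimes X$. The defining property of a homology colimit only controls the functors $\mathrm{Hom}_\mathcal{T}(D,-)$ with $D$ compact. You need the colimit formula for $\mathrm{Hom}_\mathcal{T}(D,-\otimes X)$, which is a coproduct-preserving homological functor but not of that form. In fact, the claim $W\otimes X\simeq\mathrm{hcolim}(C_\lambda\otimes X)$ is essentially the statement that phantoms form a tensor ideal, i.e.\ what you are trying to prove. For instance, presenting $W$ by the pure triangle $K\to\coprod_\lambda C_\lambda\to W\xrightarrow{\psi}K[1]$, you could only lift maps ${\i}\to W\otimes X$ to $\coprod_\lambda C_\lambda\otimes X$ if you already knew that $\psi\otimes X$ is phantom. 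Your second fallback has the same status: that restricted Yoneda is monoidal is exactly the missing content, not something to invoke. The third fallback, citing \cite[Theorem 3.6]{BW}, is what the paper does.

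What is missing is the following lemma: if $H:\mathcal{T}\to\mathrm{Ab}$ is homological and preserves coproducts, then the canonical map $\mathrm{colim}_{(C\to W)\in\mathcal{T}^c/W}H(C)\to H(W)$ is an isomorphism for every $W$.
\begin{itemize}
\item Its source is $h(W)\otimes_{\mathcal{T}^c}H|_{\mathcal{T}^c}$, with $h$ the restricted Yoneda functor.
\item Since $H|_{\mathcal{T}^c}$ is homological, it is a flat functor. So $-\otimes_{\mathcal{T}^c}H|_{\mathcal{T}^c}$ is exact, and the source is again a coproduct-preserving homological functor of $W$.
\item The map is natural and is an isomorphism for compact $W$. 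Hence the $W$ for which it is an isomorphism at all shifts $W[n]$ form a localizing subcategory containing $\mathcal{T}^c$, which is therefore everything.
\end{itemize}
Apply this to $\mathrm{Hom}_\mathcal{T}({\i},-\otimes Z)$ and $\mathrm{Hom}_\mathcal{T}({\i},-\otimes X[1])$. Naturality gives $\mathrm{Hom}_\mathcal{T}({\i},W\otimes\Phi)\cong\mathrm{colim}\,\mathrm{Hom}_\mathcal{T}({\i},C\otimes\Phi)=0$ by your Step 1, which repairs Step 3.

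A shorter route is available if you accept Krause's theory of pure-injectives. By symmetry of $\otimes$ and the defining property of $\mathbb{I}_{{\i}}$, one has $\mathrm{Hom}_\mathcal{T}(W,\mathbb{I}_{{\i}}(\Phi))\cong\mathrm{Hom}_\mathcal{T}(\Phi,\mathbb{I}_{{\i}}(W))$. This vanishes because $\mathbb{I}_{{\i}}(W)$ is pure-injective (Remark~\ref{lem:3.2}(1)), and $\mathrm{Hom}_\mathcal{T}(-,E)$ annihilates phantom maps whenever $E$ is pure-injective.
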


\begin{lem}\label{lem1.2} Let $\mathcal{T}$ be a tensor triangulated category with a compact silting generator $M$ and $X\rightarrow Y\rightarrow Z\stackrel{g}\rightarrow X[1]$ an exact triangle.
 If $M^\vee\in\mathrm{add}(M)$, then $g$ is an $M$-ghost if and only if $\mathbb{I}_{{\i}}(g)$ is an $M$-ghost.
\end{lem}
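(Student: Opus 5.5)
The plan is to reduce ``$g$ is an $M$-ghost'' to the vanishing of $\mathrm{Hom}_\mathcal{T}(M[i],g)$ for all $i\in\mathbb{Z}$. Then translate $\mathrm{Hom}_\mathcal{T}(M[i],\mathbb{I}_{{\i}}(g))$ into a Pontryagin dual of $\mathrm{Hom}_\mathcal{T}(\text{compact},g)$, using the defining property of $\mathbb{I}_{{\i}}$. Since $\mathbb{Q}/\mathbb{Z}$ is an injective cogenerator of abelian groups, a homomorphism of abelian groups is zero if and only if its $\mathbb{Q}/\mathbb{Z}$-dual is zero. The whole argument is therefore a chain of natural isomorphisms followed by this faithfulness statement.

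\textbf{The key computation.} For $X\in\mathcal{T}$, $C\in\mathcal{T}^c$ and $i\in\mathbb{Z}$, I would use the definition of $\mathbb{I}_{{\i}}(X)$ (with $C={\i}$ in the Brown--Comenetz dual, $X\otimes-$), together with strong dualizability of compacts (property (ii)), to get natural isomorphisms
$$\mathrm{Hom}_\mathcal{T}(C[i],\mathbb{I}_{{\i}}(X))\cong \mathrm{Hom}_{\mathbb{Z}}(\mathrm{Hom}_\mathcal{T}({\i},X\otimes C[i]),\mathbb{Q}/\mathbb{Z})\cong\mathrm{Hom}_{\mathbb{Z}}(\mathrm{Hom}_\mathcal{T}(C^\vee[-i],X),\mathbb{Q}/\mathbb{Z}).$$
Here the second step uses $\mathrm{Hom}_\mathcal{T}({\i},C\otimes X)\cong\mathrm{Hom}_\mathcal{T}(C^\vee,X)$, which comes from (ii) with ${\i}\otimes C^\vee\cong C^\vee$. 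Both isomorphisms are natural in $X$, so for $g:Z\to X[1]$ the map $\mathrm{Hom}_\mathcal{T}(C[i],\mathbb{I}_{{\i}}(g))$ is identified with the $\mathbb{Q}/\mathbb{Z}$-dual of $\mathrm{Hom}_\mathcal{T}(C^\vee[-i],g)$, up to the shift bookkeeping $\mathbb{I}_{{\i}}(X[1])\cong\mathbb{I}_{{\i}}(X)[-1]$. Hence $\mathrm{Hom}_\mathcal{T}(C[i],\mathbb{I}_{{\i}}(g))=0$ for all $i$ if and only if $\mathrm{Hom}_\mathcal{T}(C^\vee[i],g)=0$ for all $i$.

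\textbf{Applying this with $C=M$.} Taking $C=M$ gives: $\mathbb{I}_{{\i}}(g)$ is an $M$-ghost if and only if $g$ is an $M^\vee$-ghost, meaning $\mathrm{Hom}_\mathcal{T}(M^\vee[i],g)=0$ for all $i$. Since $M^\vee\in\mathrm{add}(M)$, every $M$-ghost is an $M^\vee$-ghost, because Hom out of a finite sum of summands vanishes on $g$ componentwise. This proves that if $g$ is an $M$-ghost then $\mathbb{I}_{{\i}}(g)$ is an $M$-ghost.

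\textbf{The converse, which I expect to be the main obstacle.} It needs $M\in\mathrm{add}(M^\vee)$. I would obtain this by applying the exact functor $(-)^\vee$ to $M^\vee\in\mathrm{add}(M)$, which gives $M\cong M^{\vee\vee}\in\mathrm{add}(M^\vee)$ by (i). Thus $\mathrm{add}(M)=\mathrm{add}(M^\vee)$, the $M$-ghosts and $M^\vee$-ghosts coincide, and the equivalence follows.

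The remaining care is in the naturality and the shift conventions of the isomorphism above. In particular I would check that $\mathbb{I}_{{\i}}$ is contravariant exact and that its commutation with shifts is compatible with the identification. I would handle this by working with $\mathbb{I}_{{\i}}(X)\cong\mathcal{H}\mathrm{om}(X,{\i}^*)$ from Remark \ref{lem:3.2}(2), which makes exactness and naturality transparent.
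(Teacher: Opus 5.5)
Your proposal is correct and is essentially the paper's own argument. The paper also proves the natural identification $\mathrm{Hom}_\mathcal{T}(C[i],\mathbb{I}_{{\i}}(-))\cong\mathrm{Hom}_\mathcal{T}(C^\vee,-[i])^+$ and then concludes via faithfulness of $\mathbb{Q}/\mathbb{Z}$-duality, $M^\vee\in\mathrm{add}(M)$ and $M^{\vee\vee}\cong M$. The only cosmetic difference is in the converse: the paper plugs in $C=M^\vee$ directly, whereas you first upgrade the hypothesis to $\mathrm{add}(M)=\mathrm{add}(M^\vee)$ and use only $C=M$.
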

\begin{proof} For $C\in\mathcal{T}^c$ and $i\in\mathbb{Z}$, it follows from the isomorphism $\mathrm{Hom}_\mathcal{T}(C^\vee,-)\cong\mathrm{Hom}_\mathcal{T}({\i},C\otimes -)$ and the definition of $\mathbb{I}_{{\i}}$, one has
$$\operatorname{Hom}_{\mathcal{T}}(C[i],\mathbb{I}_{{\i}}(-))\cong
\operatorname{Hom}_{\mathcal{T}}(C^\vee,-[i])^+,$$
it yields the following commutative diagram:
\begin{center} $\xymatrix@C=20pt@R=20pt{
 0\ar[r]& \operatorname{Hom}_{\mathcal{T}}(C[i],\mathbb{I}_{{\i}}(Z)) \ar[d]^\cong\ar[r]& \operatorname{Hom}_{\mathcal{T}}(C[i],\mathbb{I}_{{\i}}(Y)) \ar[d]^\cong\ar[r] &\operatorname{Hom}_{\mathcal{T}}(C[i],\mathbb{I}_{{\i}}(X)) \ar[d]^\cong\ar[r]&0 \\
 0\ar[r]& \operatorname{Hom}_{\mathcal{T}}(C^\vee,Z[i])^+  \ar[r]& \operatorname{Hom}_{\mathcal{T}}(C^\vee,Y[i])^+ \ar[r] &\operatorname{Hom}_{\mathcal{T}}(C^\vee,X[i])^+ \ar[r]&0}$
\end{center}
If $g$ is an $M$-ghost, then the lower is exact for $C=M$ as  $M^\vee\in\mathrm{add}(M)$,  so is the upper row, it implies that $\mathbb{I}_{{\i}}(g)$ is an $M$-ghost.
Conversely, the upper row is exact for $C=M^\vee$. As $M^{\vee\vee}\cong M$, it implies that $0\rightarrow\operatorname{Hom}_{\mathcal{T}}(M[-i],X)\rightarrow
\operatorname{Hom}_{\mathcal{T}}(M[-i],Y)\rightarrow \operatorname{Hom}_{\mathcal{T}}(M[-i],Z)\rightarrow0$ is exact. Thus $g$ is an $M$-ghost.
\end{proof}

\begin{lem}\label{lem1.4} Let $\mathcal{T}$ be a tensor triangulated category with a compact silting generator $M$.
If  $M^\vee\in\mathrm{add}(M)$, then for $X\in\mathcal{T}$ and $t\in\mathbb{Z}$, $$\mathcal{H}^{-t}_M(\mathbb{I}_{{\i}}(X))\cong\mathbb{I}_{{\i}}(\mathcal{H}^t_M(X)).$$ In particular,
$\mathrm{sup}\mathbb{I}_{{\i}}(X)=-\mathrm{inf}X$ and $\mathrm{inf}\mathbb{I}_{{\i}}(X)=-\mathrm{sup}X$.
\end{lem}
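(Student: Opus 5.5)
The strategy is to compute the cohomology of $\mathbb{I}_{{\i}}(X)$ by testing against the generator $M$ with Lemma \ref{lem0.2}, and to compare the result with $\mathbb{I}_{{\i}}$ applied to the cohomology of $X$. Both objects sit in the heart (the first by definition, the second by a separate argument), so they are determined by the Hom functors they represent there.

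First, for $P\in\mathcal{P}$ and $t\in\mathbb{Z}$, I combine Lemma \ref{lem0.2} with the defining property of $\mathbb{I}_{{\i}}$ and the identity $\mathrm{Hom}_\mathcal{T}(C,X\otimes-)\cong\mathrm{Hom}_\mathcal{T}(C\otimes(-)^{\vee},X)$ for compact objects, exactly as in the proof of Lemma \ref{lem1.2}. This gives
$$\mathrm{Hom}_\mathcal{T}(P,\mathcal{H}^{-t}_M(\mathbb{I}_{{\i}}(X)))\cong\mathrm{Hom}_\mathcal{T}(P[t],\mathbb{I}_{{\i}}(X))\cong\mathrm{Hom}_\mathcal{T}(P^\vee[-t],X)^+,$$
where $(-)^+=\mathrm{Hom}_\mathbb{Z}(-,\mathbb{Q}/\mathbb{Z})$. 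Since $M^\vee\in\mathrm{add}(M)$, for $P=M$ the object $P^\vee$ again lies in $\mathcal{P}$. So Lemma \ref{lem0.2} rewrites the right-hand side as $\mathrm{Hom}_\mathcal{T}(M^\vee,\mathcal{H}^t_M(X))^+$.

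Second, I run the same computation with $H:=\mathcal{H}^t_M(X)\in\mathcal{H}_M$ in place of $X$. It gives $\mathrm{Hom}_\mathcal{T}(M[s],\mathbb{I}_{{\i}}(H))\cong\mathrm{Hom}_\mathcal{T}(M^\vee[-s],H)^+$. This vanishes for $s\neq0$ because $H\in M^{\vee\perp_{\neq0}}$, which holds since $M^\vee\in\mathrm{add}(M)$. Hence $\mathbb{I}_{{\i}}(H)\in\mathcal{H}_M$, and $\mathrm{Hom}_\mathcal{T}(M,\mathbb{I}_{{\i}}(H))\cong\mathrm{Hom}_\mathcal{T}(M^\vee,H)^+$. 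This matches the formula from the first step.

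To turn the agreement of these Hom groups into an isomorphism of objects, I need a morphism between the two objects that induces the identification. The natural candidate comes from applying $\mathbb{I}_{{\i}}$ to the truncation triangles $\tau^{\leq t-1}X\to\tau^{\leq t}X\to H[-t]$ and $H[-t]\to\tau^{\geq t}X\to\tau^{\geq t+1}X$. The composite $\tau^{\leq t}X\to H[-t]$ together with $\tau^{\leq t}X\to X$ yields maps
$$\mathbb{I}_{{\i}}(H)[t]\to\mathbb{I}_{{\i}}(\tau^{\leq t}X)\quad\text{and}\quad \mathbb{I}_{{\i}}(X)\to\mathbb{I}_{{\i}}(\tau^{\leq t}X).$$
Applying $\mathrm{Hom}_\mathcal{T}(M[s],-)$ and the first step, we see that $\mathbb{I}_{{\i}}$ reverses cohomological degrees. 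It therefore carries the $t$-structure to itself with degrees negated, sending $\mathcal{T}^{\leq t}$ into $\mathcal{T}^{\geq -t}$, since the relevant Hom groups are duals of Hom groups out of $\mathcal{P}$. So $\mathbb{I}_{{\i}}(\tau^{\leq t}X)=\tau^{\geq -t}\mathbb{I}_{{\i}}(X)$ and similarly for the other truncations. The triangles above then exhibit $\mathbb{I}_{{\i}}(H)$ as $\mathcal{H}^{-t}_M(\mathbb{I}_{{\i}}(X))$ by uniqueness of truncation triangles.

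I expect the main obstacle to be this step: verifying that $\mathbb{I}_{{\i}}$ sends the aisle and co-aisle to the shifted co-aisle and aisle. It reduces to checking $\mathrm{Hom}_\mathcal{T}(M[s],\mathbb{I}_{{\i}}(Y))=0$ in the right range of $s$, using the degree-reversing formula from the first step and the hypothesis that $\mathcal{T}^{\leq n}$ is described by vanishing of $\mathrm{Hom}$ from $M^\vee[\,\cdot\,]$. The statements about $\sup$ and $\inf$ then follow immediately, because $\mathbb{I}_{{\i}}(H)=0$ if and only if $H=0$: its $M$-Hom group is the $\mathbb{Q}/\mathbb{Z}$-dual of $\mathrm{Hom}(M^\vee,H)$, and $M^\vee$ detects the heart because $M\in\mathrm{add}(M^{\vee})$ via $M^{\vee\vee}\cong M$.
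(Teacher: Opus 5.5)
Your proof is correct. Its first two steps reproduce the paper's chain of isomorphisms; it differs from the paper in how it passes from Hom groups to an isomorphism of objects.

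\textbf{How the paper finishes.} The paper obtains the group isomorphism $\mathrm{Hom}_\mathcal{T}(M,\mathbb{I}_{{\i}}(\mathcal{H}^t_M(X)))\cong\mathrm{Hom}_\mathcal{T}(M,\mathcal{H}^{-t}_M(\mathbb{I}_{{\i}}(X)))$ and then cites \cite[Theorem 1.3]{HKM}. That is, it uses the fact that objects of $\mathcal{H}_M$ are detected by $\mathrm{Hom}_\mathcal{T}(M,-)$ as $\mathrm{End}_\mathcal{T}(M)$-modules. This tacitly needs the chain to be $\mathrm{End}_\mathcal{T}(M)$-linear. It is, being natural in $M$, but the paper does not say so.

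\textbf{How you finish.} You apply the formula $\mathrm{Hom}_\mathcal{T}(M[s],\mathbb{I}_{{\i}}(Y))\cong\mathrm{Hom}_\mathcal{T}(M^\vee,Y[s])^+$ to arbitrary $Y$. Together with $M^\vee\in\mathrm{add}(M)$, this shows that $\mathbb{I}_{{\i}}$ maps $\mathcal{T}^{\leq t}$ into $\mathcal{T}^{\geq -t}$ and $\mathcal{T}^{\geq t+1}$ into $\mathcal{T}^{\leq -t-1}$. Hence $\mathbb{I}_{{\i}}$ of a truncation triangle is a truncation triangle, giving $\mathbb{I}_{{\i}}(\tau^{\leq t}X)\cong\tau^{\geq -t}\mathbb{I}_{{\i}}(X)$. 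Dualizing $\tau^{\leq t-1}X\to\tau^{\leq t}X\to\mathcal{H}^t_M(X)[-t]\to$ then gives $\mathbb{I}_{{\i}}(\mathcal{H}^t_M(X))[t]\cong\tau^{\leq -t}\tau^{\geq -t}\mathbb{I}_{{\i}}(X)=\mathcal{H}^{-t}_M(\mathbb{I}_{{\i}}(X))[t]$. The indices work out with the paper's conventions $\mathcal{T}^{\leq n}=\mathcal{P}^{\perp_{>n}}$ and $\mathcal{T}^{\geq n+1}=\mathcal{P}^{\perp_{\leq n}}$.

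\textbf{Comparison.} Your route is self-contained and gives a canonical isomorphism, natural in $X$. It also yields anti-$t$-exactness of $\mathbb{I}_{{\i}}$ directly, which is the form in which the lemma is used later (e.g.\ $\mathbb{I}_{{\i}}(K[1])\in\mathcal{T}^{\geq1}$ in Lemma \ref{lem1.03}). It also makes your separate Step 2 redundant. The paper's route is shorter but depends on the external heart equivalence.

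\textbf{Two small remarks.} First, your opening formula $\mathrm{Hom}_\mathcal{T}(P[t],\mathbb{I}_{{\i}}(X))\cong\mathrm{Hom}_\mathcal{T}(P^\vee[-t],X)^+$ holds only for compact $P$. For an infinite coproduct $P$, $P^\vee$ becomes a product. This is harmless, since you only use $P=M$. Second, your argument that $\mathbb{I}_{{\i}}(H)=0$ forces $H=0$ correctly fills in a detail that the paper's sup/inf statement leaves implicit.
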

\begin{proof} For $t\in\mathbb{Z}$,  it follows from Lemma \ref{lem0.2} and the definition of $\mathbb{I}_{{\i}}$ that
\begin{center}$\begin{aligned}\mathrm{Hom}_\mathcal{T}(M,\mathbb{I}_{{\i}}(\mathcal{H}^t_M(X)))
&\cong
\mathrm{Hom}_\mathcal{T}({\i},M\otimes\mathcal{H}^t_M(X))^+\\
&\cong\mathrm{Hom}_\mathcal{T}(M^\vee,\mathcal{H}^{t}_M(X))^+\\
&\cong\mathrm{Hom}_\mathcal{T}(M^\vee,X[t])^+\\
&\cong\mathrm{Hom}_\mathcal{T}({\i},M[t]\otimes X)^+\\
&\cong\mathrm{Hom}_\mathcal{T}(M[t],\mathbb{I}_{{\i}}(X))\\
&\cong\mathrm{Hom}_\mathcal{T}(M,\mathcal{H}^{-t}_M(\mathbb{I}_{{\i}}(X))),\end{aligned}$\end{center}
Hence \cite[Theorem 1.3]{HKM} implies the desired isomorphism.
\end{proof}

\begin{lem}\label{lem1.14} Let $\mathcal{T}$ be a tensor triangulated category with a compact silting generator $M$.
 If $\mathrm{add}(M)=\mathrm{add}({\i})$, then for $X,Y\in\mathcal{T}$, one has $$\mathrm{inf}\mathcal{H}\mathrm{om}(X,Y)\geq\mathrm{inf}Y-\mathrm{sup}X,\ \mathrm{sup}(X\otimes Y)\leq\mathrm{sup}X+\mathrm{sup}Y.$$
\end{lem}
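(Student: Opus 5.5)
We first prove the inequality for $\mathrm{Hom}$ and then obtain the tensor inequality from it by duality through $\mathbb{I}_{{\i}}$ (Lemma \ref{lem1.4}).

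\textbf{Step 1: $\mathrm{inf}\,\mathcal{H}\mathrm{om}(X,Y)\geq\mathrm{inf}Y-\mathrm{sup}X$.} We may assume $s=\mathrm{sup}X<\infty$ and $t=\mathrm{inf}Y>-\infty$, since otherwise there is nothing to prove. By Lemma \ref{lem0.2}, $\mathcal{H}^\ell_M(Z)=0$ exactly when $\mathrm{Hom}_\mathcal{T}(M[-\ell],Z)=0$, because $\mathcal{H}^0_M(M)$ is a projective generator of $\mathcal{H}_M$. It therefore suffices to show that $\mathrm{Hom}_\mathcal{T}(M[-\ell],\mathcal{H}\mathrm{om}(X,Y))=0$ for $\ell<t-s$. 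By adjunction $(\ast)$ this group is $\mathrm{Hom}_\mathcal{T}(X\otimes M[-\ell],Y)$. Since $\mathrm{add}(M)=\mathrm{add}({\i})$, $M$ is a summand of ${\i}^{n}$, so $X\otimes M$ is a summand of $X^{n}$, and $X\otimes M[-\ell]\in\mathrm{add}(X)[-\ell]$. Moreover $\mathrm{sup}(X[-\ell])=s+\ell<t$. We then use the $t$-structure orthogonality: an object $A$ with $\mathrm{sup}A<t$ lies in $\mathcal{T}^{\leq t-1}$, and an object $B$ with $\mathrm{inf}B\geq t$ lies in $\mathcal{T}^{\geq t}$, so $\mathrm{Hom}_\mathcal{T}(A,B)=0$. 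This gives the vanishing.

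\textbf{The expected obstacle} is the identification of $\mathrm{sup}$ and $\mathrm{inf}$ with membership in the aisle and coaisle. This requires $X\in\mathcal{T}^-$ and $Y\in\mathcal{T}^+$, that is, nondegeneracy of the $t$-structure. For a compact silting generator this holds: the $t$-structure is nondegenerate because $M$ generates, so $\bigcap_n\mathcal{T}^{\leq n}$ and $\bigcap_n\mathcal{T}^{\geq n}$ are both zero. What is still missing is the bounded-above step. If $\mathrm{sup}X=s$, one must show $X\cong\tau^{\leq s}X$, which follows if $\tau^{\geq s+1}X$, having all cohomology zero, vanishes. For this one uses that $\mathcal{H}^\ell_M(Z)=0$ for all $\ell$ gives $\mathrm{Hom}(M[i],Z)=0$ for all $i$, hence $Z=0$ since $M$ is a generator. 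By the same argument the case $\mathrm{inf}Y=t$ gives $\tau^{\leq t-1}Y=0$. To bypass unbounded issues entirely, one can instead argue directly. Write $X\cong\tau^{\leq s}X$ and $Y\cong\tau^{\geq t}Y$; then $\mathrm{Hom}(\tau^{\leq s}X[-\ell],\tau^{\geq t}Y)=0$ because $\tau^{\leq s}X[-\ell]\in\mathcal{T}^{\leq s+\ell}\subseteq\mathcal{T}^{\leq t-1}$.

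\textbf{Step 2: $\mathrm{sup}(X\otimes Y)\leq\mathrm{sup}X+\mathrm{sup}Y$.} We apply $\mathbb{I}_{{\i}}=\mathcal{H}\mathrm{om}(-,\mathbb{I})$ (Remark \ref{lem:3.2}(2)). By $(\ast\ast)$,
$$\mathbb{I}_{{\i}}(X\otimes Y)\cong\mathcal{H}\mathrm{om}(X,\mathbb{I}_{{\i}}(Y)).$$
Since $M^\vee\in\mathrm{add}(M)$, Lemma \ref{lem1.4} gives
$$\mathrm{sup}(X\otimes Y)=-\mathrm{inf}\,\mathbb{I}_{{\i}}(X\otimes Y)=-\mathrm{inf}\,\mathcal{H}\mathrm{om}(X,\mathbb{I}_{{\i}}(Y)).$$
Step 1 bounds this by $-(\mathrm{inf}\,\mathbb{I}_{{\i}}(Y)-\mathrm{sup}X)$, and applying Lemma \ref{lem1.4} again, this equals $\mathrm{sup}Y+\mathrm{sup}X$.
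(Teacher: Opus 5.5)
Your proof is correct and follows the paper's route. The $\mathcal{H}\mathrm{om}$ bound comes from the adjunction $\mathrm{Hom}_\mathcal{T}(M[-\ell],\mathcal{H}\mathrm{om}(X,Y))\cong\mathrm{Hom}_\mathcal{T}(X\otimes M[-\ell],Y)$ together with $M\in\mathrm{add}({\i})$, and the tensor bound comes from $\mathbb{I}_{{\i}}(X\otimes Y)\cong\mathcal{H}\mathrm{om}(X,\mathbb{I}_{{\i}}(Y))$ and Lemma \ref{lem1.4}; your argument that $\mathrm{sup}X=s$ forces $X\cong\tau^{\leq s}(X)$ (since $M$ generates) just makes explicit a point the paper uses tacitly.
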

\begin{proof} As $M\in\mathrm{add}({\i})$, $\mathrm{sup}(M\otimes X)\leq\mathrm{sup}X$, it implies that
$$\operatorname{Hom}_{\mathcal{T}}(M[i],\mathcal{H}\mathrm{om}(X,Y))\cong
\operatorname{Hom}_{\mathcal{T}}((M\otimes X)[i],Y)=0$$ for $i>\mathrm{sup}X-\mathrm{inf}Y$. Hence $\mathrm{inf}\mathcal{H}\mathrm{om}(X,Y)\geq\mathrm{inf}Y-\mathrm{sup}X$ by Lemma \ref{lem0.2}. As $\mathbb{I}_{{\i}}(X\otimes Y)\cong\mathcal{H}\mathrm{om}(X,\mathbb{I}_{{\i}}(Y))$, it follows from Lemma \ref{lem1.4} that
$\mathrm{sup}(X\otimes Y)=-\mathrm{inf}\mathbb{I}_{{\i}}(X\otimes Y)\leq-\mathrm{inf}\mathbb{I}_{{\i}}(Y)+\mathrm{sup}X=\mathrm{sup}X+\mathrm{sup}Y$.
\end{proof}

In what follows, $\mathcal{T}$ is a compactly generated tensor triangulated category with compact tensor unit ${\i}$ and $M$ is a compact silting generator with heart $\mathcal{H}_M$.

\bigskip
\section {\bf Flat dimension in tensor triangulated categories}
This section introduces a triangulated analogue of flat dimension $\mathrm{flatdim}_\mathcal{T}X$, and provides several criteria for
$\mathrm{flatdim}_\mathcal{T}X$ in terms of vanishing conditions and resolutions.

Let \(\mathcal{F}=\operatorname{PGen}(M)\) be the class of objects $F\in\mathcal{T}$ for which there is a pure exact triangle $K\rightarrow P\stackrel{f}\rightarrow F\stackrel{h}\rightarrow K[1]$ with $P\in\mathcal{P}$. Equivalently,
$\mathcal{F}$ is a pure quotient of an object of $\mathcal{P}$. We begin by studying the class
 \(\mathcal{F}\),  which serves as the triangulated analogue of flat objects.

\begin{lem}\label{lem1.03} Assume that $M^\vee\in\mathrm{add}(M)$.

$(1)$ An object $F\in\mathcal{F}$ if and only if $\mathbb{I}_{{\i}}(F)\in\mathcal{I}$. Moreover, $\mathcal{F}$ is closed under pure subobjects, pure quotients and pure extensions.

$(2)$ An object $F\in\mathcal{F}$ if and only if every exact triangle  $K\rightarrow P\stackrel{f}\rightarrow F\stackrel{h}\rightarrow K[1]$ with $f$ a sppj morphism is pure.
\end{lem}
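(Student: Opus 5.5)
The engine throughout will be the duality $\mathbb{I}_{{\i}}$, combined with three earlier facts. First, Lemma \ref{lem1.22}: a triangle is pure if and only if its third morphism $\Phi$ satisfies $\mathbb{I}_{{\i}}(\Phi)=0$. Second, Remark \ref{lem:3.2}(3): since $M^\vee\in\mathrm{add}(M)$, we have $\mathbb{I}_{{\i}}(\mathcal{P})\subseteq\mathcal{I}$. Third, the duality triple property (Definition \ref{lem:0.1}): the natural map $i_X\colon X\to\mathbb{I}^2X$ is a pure monomorphism and $\mathbb{I}(i_X)\circ i_{\mathbb{I}X}=\mathrm{id}$.

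\emph{Part (1), forward direction.} Suppose $F\in\mathcal{F}$ with pure triangle $K\to P\to F\stackrel{h}\to K[1]$. Applying $\mathbb{I}_{{\i}}$ gives a triangle in which $\mathbb{I}_{{\i}}(h)=0$. So it splits, and $\mathbb{I}_{{\i}}(F)$ is a summand of $\mathbb{I}_{{\i}}(P)\in\mathcal{I}$. Since $\mathcal{I}$ is closed under summands, $\mathbb{I}_{{\i}}(F)\in\mathcal{I}$.

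\emph{Part (1), converse.} Suppose $\mathbb{I}_{{\i}}(F)\in\mathcal{I}$. Take a sppj-type morphism, or better the $M$-ghost triangle $Y\to P\stackrel{f}\to F\to Y[1]$ of Lemma \ref{lem1.00}(1) with $P\in\mathcal{P}$ (a coproduct of shifts; I would first reduce to the unshifted case, using that $F$ should be concentrated in degree $0$ because $\mathbb{I}_{{\i}}(F)\in\mathcal{I}$ lies in the heart by Lemma \ref{lem1.4}). I then want $\mathbb{I}_{{\i}}(f)$ to be a split monomorphism, so that the connecting map dualizes to zero and the triangle is pure by Lemma \ref{lem1.22}. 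By Lemma \ref{lem1.2}, $\mathbb{I}_{{\i}}$ of the triangle is again $M$-ghost, hence short exact on $\mathcal{H}^*_M$ by Lemma \ref{lem2.0}. Its first map is $\mathbb{I}_{{\i}}(f)\colon\mathbb{I}_{{\i}}(F)\to\mathbb{I}_{{\i}}(P)$, which is therefore a monomorphism in the heart. Its source lies in $\mathcal{I}$, and objects of $\mathcal{I}$ are injective in $\mathcal{H}_M$ by Lemma \ref{lem0.2}, since $\mathrm{Hom}(-,I)$ is exact on the heart. Hence $\mathbb{I}_{{\i}}(f)$ splits in the heart, and therefore in $\mathcal{T}$ because both objects lie in the heart. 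Thus the third map of the dual triangle, which is $\mathbb{I}_{{\i}}$ of the connecting morphism, vanishes, and purity follows.

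\emph{Closure properties.} Given a pure triangle $A\to B\to C\to$, its $\mathbb{I}_{{\i}}$-dual is split, so $\mathbb{I}_{{\i}}(B)\cong\mathbb{I}_{{\i}}(A)\oplus\mathbb{I}_{{\i}}(C)$. Since $\mathcal{I}$ is closed under summands and finite sums, membership of $B$ in $\mathcal{F}$ is equivalent to membership of both $A$ and $C$. This yields closure under pure subobjects, pure quotients and pure extensions simultaneously.

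\emph{Part (2).} The direction ``every such triangle is pure implies $F\in\mathcal{F}$'' is immediate once a sppj morphism $P\to F$ exists, i.e.\ once $F\in\mathcal{T}^-$ with suitable sup. For the converse, take $F\in\mathcal{F}$ and an arbitrary triangle $K\to P\stackrel{f}\to F$ with $f$ sppj. Part (1) and Lemma \ref{lem1.4} place $F$ in the heart, so $\mathcal{H}^0_M(f)$ is an epimorphism. Dually, $\mathbb{I}_{{\i}}(f)$ is then a monomorphism in the heart from the injective object $\mathbb{I}_{{\i}}(F)$, as before. Hence it splits, the dual connecting map vanishes, and the triangle is pure by Lemma \ref{lem1.22}.

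\emph{Main obstacle.} The delicate step is justifying that $\mathcal{H}^0_M\mathbb{I}_{{\i}}(f)$ being a split monomorphism forces $\mathbb{I}_{{\i}}$ of the connecting morphism to vanish in $\mathcal{T}$. This requires the heart objects to be genuinely in degree $0$, i.e.\ that $P$ and $F$ are concentrated, via Lemma \ref{lem1.4} and the $t$-structure sup/inf bookkeeping. I expect the degree considerations, especially for $K$, to need the most care.
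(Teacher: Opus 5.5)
Your forward direction in (1) is the paper's argument. Your derivation of the closure properties from (1) is also correct: the $\mathbb{I}_{{\i}}$-dual of a pure triangle splits, so $\mathbb{I}_{{\i}}(B)\cong\mathbb{I}_{{\i}}(A)\oplus\mathbb{I}_{{\i}}(C)$. It is more self-contained than the paper's appeal to \cite[Theorem 4.3]{BW}.

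\textbf{The gap.} The converse of (1) and the forward direction of (2) both rest on the claim that $\mathcal{I}$, hence $\mathbb{I}_{{\i}}(F)$, lies in $\mathcal{H}_M$, so that $F$ and $P$ are concentrated in degree $0$. This holds only when $M$ is tilting. The lemma, however, is stated for a compact \emph{silting} $M$ with $M^\vee\in\mathrm{add}(M)$. Indeed $\mathrm{Hom}_{\mathcal{T}}(M,M^*[i])\cong\mathrm{Hom}_{\mathcal{T}}(M,M[-i])^+$, which may be nonzero for $i>0$. Concretely, $F=M$ lies in $\mathcal{P}\subseteq\mathcal{F}$ and $\mathbb{I}_{{\i}}(M)\in\mathcal{I}$, yet $F\notin\mathcal{H}_M$ whenever $M$ is not tilting; for example $M=A$ in Example \ref{lem:2.2}(2) with $\mathrm{H}^{-1}(A)\neq0$. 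Two steps then fail:
\begin{enumerate}
\item Your reduction of the $M$-ghost cover of Lemma \ref{lem1.00}(1) to the unshifted case. Such a cover must be onto every $\mathcal{H}^\ell_M(F)$, including $\ell<0$, so $P$ acquires summands $M_\ell[-\ell]\notin\mathcal{P}$. Purity of that triangle would then not exhibit $F$ as a pure quotient of an object of $\mathcal{P}$.
\item The step ``$\mathbb{I}_{{\i}}(f)$ is a monomorphism in the heart with injective source, hence split in $\mathcal{T}$''. Neither $\mathbb{I}_{{\i}}(F)$ nor $\mathbb{I}_{{\i}}(P)$ lies in the heart.
\end{enumerate}
As written, your argument proves the lemma only for tilting $M$. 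That excludes, for instance, the dg case of Remark \ref{lem:2.0}(2).

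\textbf{The fix.} Use only what Lemma \ref{lem1.4} really gives, namely $\mathrm{sup}F=-\mathrm{inf}\mathbb{I}_{{\i}}(F)\leq0$ (since $\mathcal{I}\subseteq\mathcal{T}^{\geq0}$). Also use a sppj triangle $K\to P\stackrel{f}\to F\stackrel{h}\to K[1]$ with $P\in\mathcal{P}$, not an $M$-ghost one.
\begin{enumerate}
\item Since $\mathcal{H}^0_M(f)$ is onto and $P\in\mathcal{T}^{\leq0}$, you get $\mathrm{sup}K\leq0$.
\item By Lemma \ref{lem1.4}, $\mathbb{I}_{{\i}}(K[1])\cong\mathbb{I}_{{\i}}(K)[-1]\in\mathcal{T}^{\geq1}$.
\item By Lemma \ref{lem0.2}, $\mathrm{Hom}_{\mathcal{T}}(X,I)\cong\mathrm{Hom}_{\mathcal{T}}(\mathcal{H}^0_M(X),I)=0$ for $X\in\mathcal{T}^{\geq1}$ and $I\in\mathcal{I}$.
\item Hence $\mathbb{I}_{{\i}}(h)=0$, and $h$ is phantom by Lemma \ref{lem1.22}.
\end{enumerate}
This degree argument is exactly the paper's, and it settles the converse of (1) and the forward direction of (2) at once.

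If you want to keep your splitting idea, the correct form runs as follows. Lemma \ref{lem0.2} identifies $\mathrm{Hom}_{\mathcal{T}}(-,I)$ with $\mathrm{Hom}_{\mathcal{H}_M}(\mathcal{H}^0_M(-),\mathcal{H}^0_M(I))$, and $\mathcal{H}^0_M(I)$ is injective in $\mathcal{H}_M$. Since $\mathcal{H}^0_M(\mathbb{I}_{{\i}}(f))\cong\mathbb{I}_{{\i}}(\mathcal{H}^0_M(f))$ is a monomorphism, $\mathbb{I}_{{\i}}(f)$ has a retraction in $\mathcal{T}$. This again forces $\mathbb{I}_{{\i}}(h)=0$, with no heart membership of $F$, $P$ or $\mathbb{I}_{{\i}}(F)$ required.
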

\begin{proof} (1) By Remark \ref{lem:3.2}(3), $\mathbb{I}_{{\i}}(F)\in\mathcal{I}$. Conversely, let $K\rightarrow P\stackrel{f}\rightarrow F\stackrel{h}\rightarrow K[1]$ be an exact triangle with $f$ a sppj morphism. Then $\mathbb{I}_{{\i}}(K)[-1]\rightarrow \mathbb{I}_{{\i}}(F)\rightarrow \mathbb{I}_{{\i}}(P)\rightarrow \mathbb{I}_{{\i}}(K)$ is an exact triangle. As  $\mathrm{inf}\mathbb{I}_{{\i}}(K)=-\mathrm{sup}K\geq-\mathrm{sup}F\geq0$ by Lemma  \ref{lem1.4}, $\operatorname{Hom}_{\mathcal{T}}(\mathbb{I}_{{\i}}(K)[-1],\mathbb{I}_{{\i}}(F))\cong
\operatorname{Hom}_{\mathcal{T}}(\mathcal{H}^{-1}_M(\mathbb{I}_{{\i}}(K)),\mathbb{I}_{{\i}}(F))=0$ by Lemma  \ref{lem0.2}, it implies that $h$ is phantom by Lemma  \ref{lem1.22}. Thus $F\in\mathcal{F}$. The last statement follows from \cite[Theorem 4.3]{BW}.

 (2) Let $F\in\mathcal{F}$. As $f$ is a sppj morphism, $K[1]\in\mathcal{T}^{\leq-1}$, it follows from Lemma  \ref{lem1.4} that $\mathbb{I}_{{\i}}(K[1])\in\mathcal{T}^{\geq1}$, so $\operatorname{Hom}_{\mathcal{T}}(\mathbb{I}_{{\i}}(K),\mathbb{I}_{{\i}}(F[-1]))\cong
 \operatorname{Hom}_{\mathcal{T}}(\mathbb{I}_{{\i}}(K[1]),\mathbb{I}_{{\i}}(F))=0$ since $\mathbb{I}_{{\i}}(F)\in\mathcal{I}$ by (1). Hence the triangle $\mathbb{I}_{{\i}}(F)\rightarrow \mathbb{I}_{{\i}}(P)\rightarrow \mathbb{I}_{{\i}}(K)\rightarrow \mathbb{I}_{{\i}}(F[-1])$ is split, and then $h$ is phantom by Lemma  \ref{lem1.22}. The converse is trivial.
\end{proof}

\begin{lem}\label{lem2.20} Let $n\in\mathbb{Z}$, $F\in\mathcal{F}$ and $X\in\mathcal{T}$. If $\mathrm{add}(M)=\mathrm{add}({\i})$, then

 $(1)$ $\mathcal{H}^n_M(F\otimes X)\stackrel{\cong}\rightarrow \mathcal{H}^n_M(F\otimes \tau^{\geq n}(X)),\
\mathcal{H}^n_M(F\otimes \tau^{\leq n}(X))\stackrel{\cong}\rightarrow\mathcal{H}^n_M(F\otimes X)$.

$(2)$ $\mathcal{H}^0_M(F\otimes H)\stackrel{\cong}\rightarrow \mathcal{H}^0_M(F)\otimes H$ for any $H\in\mathcal{H}_M$.

$(3)$ $\mathcal{H}^n_M(F\otimes X)\stackrel{\cong}\rightarrow \mathcal{H}^0_M(F)\otimes\mathcal{H}^n_M(X)$.
\end{lem}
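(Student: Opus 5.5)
The approach is to show that tensoring with $F\in\mathcal{F}$ is $t$-exact for the $t$-structure induced by $M$. All three statements then follow formally from the truncation triangles.

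First I would locate $F$ and $\mathbb{I}_{{\i}}(F)$. For $\mathrm{sup}F$: from a pure exact triangle $K\rightarrow P\rightarrow F\rightarrow K[1]$ with $P\in\mathcal{P}$, each $\mathrm{Hom}_\mathcal{T}(M[-i],F)$ is a quotient of $\mathrm{Hom}_\mathcal{T}(M[-i],P)$, since $M$ is compact. The latter vanishes for $i\neq 0$ because $M$ is tilting, so $\mathrm{sup}F\leq 0$. For $\mathrm{inf}F$: Lemma \ref{lem1.03}(1) gives $\mathbb{I}_{{\i}}(F)\in\mathcal{I}$, and objects of $\mathcal{I}$ lie in the heart (dual argument via Lemma \ref{lem0.2}). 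Lemma \ref{lem1.4} then gives $\mathrm{inf}F=-\mathrm{sup}\,\mathbb{I}_{{\i}}(F)\geq 0$. Hence $F\in\mathcal{H}_M$, and in particular $\mathcal{H}^0_M(F)=F$.

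Next, the right exactness half. Lemma \ref{lem1.14} gives $\mathrm{sup}(F\otimes Y)\leq\mathrm{sup}Y$, so $F\otimes\mathcal{T}^{\leq n}\subseteq\mathcal{T}^{\leq n}$.

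For the left half I would dualize. Using $\mathbb{I}_{{\i}}(F\otimes Y)\cong\mathcal{H}\mathrm{om}(Y,\mathbb{I}_{{\i}}(F))$, I compute
$$\mathrm{Hom}_\mathcal{T}(M[i],\mathcal{H}\mathrm{om}(Y,\mathbb{I}_{{\i}}(F)))\cong\mathrm{Hom}_\mathcal{T}((M\otimes Y)[i],\mathbb{I}_{{\i}}(F)).$$
Since $M\in\mathrm{add}({\i})$, we have $\mathrm{inf}(M\otimes Y)\geq\mathrm{inf}Y$. Lemma \ref{lem0.2} says $\mathrm{Hom}_\mathcal{T}(Z,I)$ only sees $\mathcal{H}^0_M(Z)$ for $I\in\mathcal{I}$. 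So this Hom vanishes whenever $-i<\mathrm{inf}Y$, giving $\mathrm{sup}\,\mathbb{I}_{{\i}}(F\otimes Y)\leq-\mathrm{inf}Y$. Lemma \ref{lem1.4} then yields $\mathrm{inf}(F\otimes Y)\geq\mathrm{inf}Y$, i.e. $F\otimes\mathcal{T}^{\geq n}\subseteq\mathcal{T}^{\geq n}$. This step is where the real work lies: getting the index bookkeeping right, and checking that $\mathcal{I}$ really sits in the heart so that Lemma \ref{lem0.2} applies as claimed.

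With $t$-exactness, the three parts follow.

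For (1), I tensor the triangle $\tau^{\leq n-1}X\rightarrow X\rightarrow\tau^{\geq n}X\rightarrow$ with $F$. The first term stays in $\mathcal{T}^{\leq n-1}$, so the long exact cohomology sequence makes $\mathcal{H}^n_M(F\otimes X)\rightarrow\mathcal{H}^n_M(F\otimes\tau^{\geq n}X)$ an isomorphism. Similarly, $\tau^{\leq n}X\rightarrow X\rightarrow\tau^{\geq n+1}X$ has third term going to $\mathcal{T}^{\geq n+1}$, which gives the second isomorphism.

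For (2), $H\in\mathcal{H}_M$ gives $F\otimes H\in\mathcal{T}^{\leq0}\cap\mathcal{T}^{\geq0}=\mathcal{H}_M$. Hence $\mathcal{H}^0_M(F\otimes H)=F\otimes H=\mathcal{H}^0_M(F)\otimes H$, reading the tensor product on the heart as $\mathcal{H}^0_M(-\otimes-)$.

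For (3), I combine the two isomorphisms of (1) applied to $\tau^{\leq n}X$ and use $\tau^{\geq n}\tau^{\leq n}X\cong\mathcal{H}^n_M(X)[-n]$. This gives
$$\mathcal{H}^n_M(F\otimes X)\cong\mathcal{H}^n_M(F\otimes\mathcal{H}^n_M(X)[-n])\cong\mathcal{H}^0_M(F\otimes\mathcal{H}^n_M(X)),$$
and (2) finishes.
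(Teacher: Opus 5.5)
Your proof covers only the case where $M$ is tilting, and tilting is not a hypothesis of this lemma. Throughout Section 2 the standing assumption is that $M$ is a compact \emph{silting} generator. Tilting is imposed explicitly only in Section 3, and Remark \ref{lem:2.0}(2) applies Section 2 to non-positive dg rings.

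You invoke tilting twice, and the second use fails for silting $M$. Objects of $\mathcal{I}$ need not lie in $\mathcal{H}_M$: $\mathrm{Hom}_{\mathcal{T}}(M[-i],M^{*})$ is the character dual of $\mathrm{Hom}_{\mathcal{T}}(M,M[-i])$, which can be nonzero for $i>0$, so $\mathcal{H}^{i}_M(M^{*})\neq0$ for some $i>0$. Accordingly the conclusion ``$F\in\mathcal{H}_M$, so $\mathcal{H}^0_M(F)=F$'' is false in general. Already $F=M\in\mathcal{P}\subseteq\mathcal{F}$ has $\mathcal{H}^{i}_M(M)\neq0$ for some $i<0$ when $M$ is not tilting. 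That identification is exactly what your proof of (2), and hence of (3), rests on.

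The repair is short, and the core of your argument survives.
\begin{itemize}
\item For $\mathrm{sup}F\leq0$ you only need $\mathrm{Hom}_{\mathcal{T}}(M[-i],P)=0$ for $i>0$, i.e.\ $\mathcal{P}\subseteq\mathcal{T}^{\leq0}$, which holds for silting $M$.
\item Your left-exactness argument never actually uses that $\mathcal{I}$ or $F$ lies in the heart. It only needs $\mathbb{I}_{{\i}}(F)\in\mathcal{I}$ and Lemma \ref{lem0.2} in the form $\mathrm{Hom}_{\mathcal{T}}((M\otimes Y)[i],I)\cong\mathrm{Hom}_{\mathcal{T}}(\mathcal{H}^{i}_M(M\otimes Y),I)$. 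This vanishes for $i<\mathrm{inf}Y$, not for $-i<\mathrm{inf}Y$ as you wrote; your conclusion $\mathrm{sup}\,\mathbb{I}_{{\i}}(F\otimes Y)\leq-\mathrm{inf}Y$ is nonetheless the correct one.
\item So $F\otimes-$ is $t$-exact for silting $M$ as well, your proof of (1) goes through, and $F\otimes H\in\mathcal{H}_M$.
\item For (2), instead of using $F=\mathcal{H}^0_M(F)$, tensor the triangle $\tau^{\leq-1}(F)\to F\to\mathcal{H}^0_M(F)\to\tau^{\leq-1}(F)[1]$ with $H$. Since $\mathrm{sup}(H\otimes\tau^{\leq-1}(F))\leq-1$ by Lemma \ref{lem1.14}, you get $\mathcal{H}^0_M(F\otimes H)\cong\mathcal{H}^0_M(\mathcal{H}^0_M(F)\otimes H)$. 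This is (2) with the tensor on the heart read as $\mathcal{H}^0_M(-\otimes-)$, as you already do.
\item Part (3) is then unchanged.
\end{itemize}

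With this fix, your route differs from the paper's in a useful way. The paper writes out only the right-exact half, $F\otimes\mathcal{T}^{\leq n-1}\subseteq\mathcal{T}^{\leq n-1}$ via Lemma \ref{lem1.14}, and calls the second isomorphism of (1) ``similar''. That isomorphism actually needs $\mathcal{H}^{n-1}_M$ and $\mathcal{H}^{n}_M$ of $F\otimes\tau^{\geq n+1}(X)$ to vanish, i.e.\ the left-exact half. Your duality argument through $\mathbb{I}_{{\i}}(F\otimes Y)\cong\mathcal{H}\mathrm{om}(Y,\mathbb{I}_{{\i}}(F))$ supplies exactly that.

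For (2), the paper truncates $F$ as above and then controls $\mathcal{H}^0_M(F)\otimes H$ through the purity of $0\to\mathcal{H}^0_M(K)\to\mathcal{H}^0_M(P)\to\mathcal{H}^0_M(F)\to0$ in $\mathcal{H}_M$. For you, (2) is a formal consequence of $t$-exactness. Part (3) is essentially the same chain of isomorphisms in both.

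Incidentally, in the tilting case your first step already gives $F\in\mathcal{H}_M$ outright. There $\mathrm{Hom}_{\mathcal{T}}(M[-i],F)$ is a quotient of $\mathrm{Hom}_{\mathcal{T}}(M[-i],P)=0$ for all $i\neq0$, so the detour through $\mathcal{I}$ is unnecessary.
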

\begin{proof} (1) Consider the exact triangle $\tau^{\leq n-1}(X)\rightarrow X\rightarrow\tau^{\geq n}(X)\rightarrow\tau^{\leq n-1}(X)[1]$, it induces an exact triangle $F\otimes\tau^{\leq n-1}(X)\rightarrow F\otimes X\rightarrow F\otimes\tau^{\geq n}(X)\rightarrow F\otimes\tau^{\leq n-1}(X)[1]$. As $\mathrm{add}(M)=\mathrm{add}({\i})$,
$F\otimes\tau^{\leq n-1}(X)\in\mathcal{T}^{\leq n-1}$  by Lemma \ref{lem1.14}, it implies that $\mathcal{H}^n_M(F\otimes X)\stackrel{\cong}\rightarrow \mathcal{H}^n_M(F\otimes \tau^{\geq n}(X))$.
 The proof of another isomorphism is similar.

(2) By (1) $\mathcal{H}^0_M(F\otimes H)\stackrel{\cong}\rightarrow \mathcal{H}^0_M(\mathcal{H}^0_M(F)\otimes H)$. As  $F\in\mathcal{F}$, there is a pure exact triangle $K\rightarrow P\rightarrow F\rightarrow K[1]$ with  $P\in\mathcal{P}$, so $0\rightarrow\mathcal{H}^0_M(K)\rightarrow \mathcal{H}^0_M(P)\rightarrow \mathcal{H}^0_M(F)\rightarrow 0$ is pure exact in $\mathcal{H}_M$ by \cite[Lemma 2.8]{K1}. Hence $0\rightarrow\mathcal{H}^0_M(K)\otimes H\rightarrow \mathcal{H}^0_M(P)\otimes H\rightarrow \mathcal{H}^0_M(F)\otimes H\rightarrow 0$ is exact in $\mathcal{H}_M$, and then $\mathcal{H}^0_M(\mathcal{H}^0_M(F)\otimes H)\cong\mathcal{H}^0_M(F)\otimes H$, as required.

(3) By (1) and (2), $\mathcal{H}^n_M(F\otimes X)\cong\mathcal{H}^n_M(F\otimes \tau^{\geq n}(X))\cong\mathcal{H}^n_M(F\otimes \tau^{\leq n}(\tau^{\geq n}(X)))\cong\mathcal{H}^n_M(F\otimes \mathcal{H}^n_M(X)[-n])\cong\mathcal{H}^0_M(F\otimes \mathcal{H}^n_M(X))\cong\mathcal{H}^0_M(F)\otimes\mathcal{H}^n_M(X)$.
\end{proof}

\begin{cor}\label{lem1.32} If $\mathrm{add}(M)=\mathrm{add}({\i})$, then $F\in\mathcal{F}$ if and only if
for any $M$-ghost triangle $X\rightarrow Y\rightarrow Z\rightarrow X[1]$, the triangle $F\otimes X\rightarrow F\otimes Y\rightarrow F\otimes Z\rightarrow F\otimes X[1]$ is $M$-ghost.
\end{cor}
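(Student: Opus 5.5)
Both directions will be reduced to exactness statements in $\mathcal{H}_M$ via Lemma \ref{lem2.0}, which says a triangle is $M$-ghost iff it induces short exact sequences on all $\mathcal{H}^\ell_M$.

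\emph{Forward direction.} Let $F\in\mathcal{F}$ and let $X\to Y\to Z\to X[1]$ be $M$-ghost. By Lemma \ref{lem2.0} the sequences $0\to\mathcal{H}^n_M(X)\to\mathcal{H}^n_M(Y)\to\mathcal{H}^n_M(Z)\to0$ are exact for all $n$. Lemma \ref{lem2.20}(3) gives natural isomorphisms $\mathcal{H}^n_M(F\otimes -)\cong\mathcal{H}^0_M(F)\otimes\mathcal{H}^n_M(-)$. Hence the sequence $\mathcal{H}^n_M(F\otimes X)\to\mathcal{H}^n_M(F\otimes Y)\to\mathcal{H}^n_M(F\otimes Z)$ is identified with $\mathcal{H}^0_M(F)\otimes(-)$ applied to that short exact sequence.

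So it suffices to know that $\mathcal{H}^0_M(F)\otimes-$ is exact on $\mathcal{H}_M$. This is essentially the argument in the proof of Lemma \ref{lem2.20}(2): $\mathcal{H}^0_M(F)$ is a pure quotient of $\mathcal{H}^0_M(P)$ with $P\in\mathcal{P}=\mathrm{Add}({\i})$, so it is a flat object of the heart.

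To check exactness concretely, I would take a short exact sequence $0\to A\to B\to C\to 0$ in $\mathcal{H}_M$. It comes from a triangle $A\to B\to C\to A[1]$ in $\mathcal{T}$, which is $M$-ghost by Lemma \ref{lem2.0}. Tensoring with $F$ gives a triangle whose long exact sequence in $\mathcal{H}^\ast_M$ reads, by Lemma \ref{lem2.20}(3) and the vanishing $\mathcal{H}^{\neq0}_M(A)=0$,
\[
0=\mathcal{H}^{-1}_M(F\otimes C)\to\mathcal{H}^0_M(F)\otimes A\to\mathcal{H}^0_M(F)\otimes B\to\mathcal{H}^0_M(F)\otimes C\to\mathcal{H}^1_M(F\otimes A)=0 .
\]
This gives exactness. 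Applying Lemma \ref{lem2.0} once more, $F\otimes X\to F\otimes Y\to F\otimes Z$ is $M$-ghost.

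\emph{Converse.} Suppose tensoring with $F$ preserves $M$-ghost triangles. By Lemma \ref{lem1.03}(1) it suffices to show $\mathbb{I}_{{\i}}(F)\in\mathcal{I}$. Alternatively, by Lemma \ref{lem1.03}(2), it suffices that a sppj triangle $K\to P\to F\xrightarrow{h}K[1]$ is pure, i.e.\ that $h$ is phantom (Lemma \ref{lem1.22}).

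I would use duality. Take any $M$-ghost triangle $X\to Y\to Z\xrightarrow{g}X[1]$. Then $F\otimes g$ is an $M$-ghost by hypothesis, so by Lemma \ref{lem1.2} $\mathbb{I}_{{\i}}(F\otimes g)\cong\mathcal{H}\mathrm{om}(F,\mathbb{I}_{{\i}}(g))$ is an $M$-ghost. Now apply $\mathrm{Hom}_\mathcal{T}(M[i],-)$ and adjunction: this shows that $\mathrm{Hom}_\mathcal{T}(M\otimes F[i],-)$ kills $\mathbb{I}_{{\i}}(g)$ for every $M$-ghost $g$.

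By Lemma \ref{lem1.2}, every $M$-ghost of the form $\mathbb{I}_{{\i}}(g)$ arises this way. To reach an arbitrary target I would use that $\mathbb{I}$-duals suffice. Concretely, take the $M$-ghost triangle $E\to I\to \mathbb{I}_{{\i}}(F)[1]\cdots$ from Lemma \ref{lem1.00}(2), in the form $\mathbb{I}_{{\i}}(F)\to I\to W\to \mathbb{I}_{{\i}}(F)[1]$ with $I\in\langle\mathcal{I}\rangle_1$.

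Since ${\i}\in\mathrm{add}(M)$, the vanishing shows that every $M$-ghost is killed by maps from $F$. I would then argue that $\mathbb{I}_{{\i}}(F)$ is $M$-ghost-injective: for an $M$-ghost $W\to \mathbb{I}_{{\i}}(F)[1]$, dualize back, using $\mathrm{Hom}(-,\mathbb{I}_{{\i}}(F))\cong\mathrm{Hom}_\mathbb{Z}(\mathrm{Hom}({\i},F\otimes-),\mathbb{Q}/\mathbb{Z})$. Under this identification the map is zero exactly when $\mathrm{Hom}({\i},F\otimes g')=0$ for the relevant ghost $g'$, which the hypothesis provides. So the triangle splits and $\mathbb{I}_{{\i}}(F)$ is a summand of $I$.

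It remains to see that $\mathbb{I}_{{\i}}(F)$ is concentrated in degree $0$. Lemma \ref{lem1.4} reduces this to $\mathcal{H}^{\ne0}_M(F)=0$, which I would extract by testing the hypothesis on the ghost triangles of Lemma \ref{lem1.00}(1) for ${\i}$ shifted.

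The main obstacle is this converse. Making precise that ghost-injectivity of $\mathbb{I}_{{\i}}(F)$ via the Brown–Comenetz adjunction forces $\mathbb{I}_{{\i}}(F)\in\mathcal{I}$ (rather than $\langle\mathcal{I}\rangle_1$ with several shifts) is the delicate step. If it fails, I would instead show directly that $\mathcal{H}^0_M(F)$ is flat in $\mathcal{H}_M$ and that $F\cong\mathcal{H}^0_M(F)$. Then I would invoke \cite[Lemma 2.8]{K1} backwards to lift a pure presentation from $\mathcal{H}_M$.
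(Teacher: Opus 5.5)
\textbf{Forward direction and duality step.} Your forward direction is correct and is essentially the paper's. The paper gets exactness of $\mathcal{H}^0_M(F)\otimes-$ on $\mathcal{H}_M$ from purity of $0\to\mathcal{H}^0_M(K)\to\mathcal{H}^0_M(P)\to\mathcal{H}^0_M(F)\to0$ and then concludes with Lemmas \ref{lem2.20} and \ref{lem2.0}. You instead read that exactness off the long exact sequence of $F\otimes(A\to B\to C)$ using Lemma \ref{lem2.20}(3); both work.

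In the converse, your duality step is also the paper's argument. Combine $\mathrm{Hom}_{\mathcal{T}}(V[-j],\mathbb{I}_{{\i}}(F))\cong\mathrm{Hom}_{\mathcal{T}}({\i},F\otimes V[-j])^+$ with the hypothesis and ${\i}\in\mathrm{add}(M)$. Then every $M$-ghost $g'$ satisfies $\mathrm{Hom}_{\mathcal{T}}(g',\mathbb{I}_{{\i}}(F)[j])=0$. Hence the $M$-ghost triangle $\mathbb{I}_{{\i}}(F)\to I\to W\to\mathbb{I}_{{\i}}(F)[1]$ of Lemma \ref{lem1.00}(2) splits, and $\mathbb{I}_{{\i}}(F)$ is a direct summand of some $I\in\langle\mathcal{I}\rangle_1$.

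\textbf{The gap.} The step you flag as delicate cannot be carried out, because the converse is false as stated. The hypothesis is invariant under $F\mapsto F[k]$: indeed $F[k]\otimes-\cong(F\otimes-)[k]$, and shifts of $M$-ghost triangles are $M$-ghost. The class $\mathcal{F}$ is not shift-invariant.

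Concretely, $F={\i}[1]$ satisfies the hypothesis, but ${\i}[1]\notin\mathcal{F}$:
\begin{itemize}
\item For $\mathcal{T}=\mathrm{D}(\mathbb{Z})$ and $M=\mathbb{Z}$, this just says $\mathbb{Z}[1]$ is not a flat module (Remark \ref{lem:2.0}(1)).
\item In general, Lemma \ref{lem1.4} gives $\mathcal{H}^0_M(\mathbb{I}_{{\i}}({\i}[1]))\cong\mathbb{I}_{{\i}}(\mathcal{H}^1_M({\i}))=0$. Lemma \ref{lem0.2} with $X=I$, $\ell=0$ shows that every nonzero $I\in\mathcal{I}$ has $\mathcal{H}^0_M(I)\neq0$. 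So the nonzero object $\mathbb{I}_{{\i}}({\i}[1])$ is not in $\mathcal{I}$, and Lemma \ref{lem1.03}(1) applies.
\end{itemize}
Consequently no test on the triangles of Lemma \ref{lem1.00}(1) can force $\mathcal{H}^{\neq0}_M(F)=0$. Your fallback $F\cong\mathcal{H}^0_M(F)$ fails on the same example.

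The paper's own proof has exactly this hole. After showing that $\mathrm{Hom}_{\mathcal{T}}(-[i],\mathbb{I}_{{\i}}(F))$ is exact on $M$-ghost triangles, it simply asserts $\mathbb{I}_{{\i}}(F)\in\mathcal{I}$. The argument only yields a summand of an object of $\langle\mathcal{I}\rangle_1$.

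\textbf{A correct version.} The converse becomes true with a normalization, for example $M$ tilting and $F\in\mathcal{H}_M$. Then $\mathbb{I}_{{\i}}(F)\in\mathcal{H}_M$ by Lemma \ref{lem1.4}, so you can replace Lemma \ref{lem1.00}(2) by an inf-injective morphism $\mathbb{I}_{{\i}}(F)\to I_0$ with $I_0\in\mathcal{I}\subseteq\mathcal{H}_M$. Its triangle is $M$-ghost by Lemma \ref{lem2.0}, so it splits by the ghost-injectivity above. This gives $\mathbb{I}_{{\i}}(F)\in\mathcal{I}$, and hence $F\in\mathcal{F}$ by Lemma \ref{lem1.03}(1).
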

\begin{proof} Let $F\in\mathcal{F}$. There is a pure exact triangle $K\rightarrow P\rightarrow F\rightarrow K[1]$ with  $P\in\mathcal{P}$, so $0\rightarrow\mathcal{H}^0_M(K)\rightarrow \mathcal{H}^0_M(P)\rightarrow \mathcal{H}^0_M(F)\rightarrow 0$ is pure exact in $\mathcal{H}_M$ by \cite[Lemma 2.8]{K1}. Hence $0\rightarrow\mathcal{H}^0_M(F)\otimes\mathcal{H}^n_M(X)\rightarrow
\mathcal{H}^0_M(F)\otimes\mathcal{H}^n_M(Y)\rightarrow\mathcal{H}^0_M(F)\otimes\mathcal{H}^n_M(Z)
\rightarrow0$ is exact for all $n\in\mathbb{Z}$, it follows from Lemmas \ref{lem2.20} and \ref{lem2.0} that $F\otimes X\rightarrow F\otimes Y\rightarrow F\otimes Z\rightarrow F\otimes X[1]$ is $M$-ghost. Conversely, $\mathcal{H}\mathrm{om}(Z,\mathbb{I}_{{\i}}(F))\rightarrow \mathcal{H}\mathrm{om}(Y,\mathbb{I}_{{\i}}(F))\rightarrow \mathcal{H}\mathrm{om}(X,\mathbb{I}_{{\i}}(F))\rightarrow \mathcal{H}\mathrm{om}(Z,\mathbb{I}_{{\i}}(F))[1]$ is $M$-ghost by Lemma \ref{lem1.2}. For $i\in\mathbb{Z}$, one has the following commutative diagram:
\begin{center} $\xymatrix@C=15pt@R=20pt{
 0\ar[r]& \operatorname{Hom}_{\mathcal{T}}({\i}[i],\mathbb{I}_{{\i}}(F\otimes Z))\ar[d]^\cong\ar[r]& \operatorname{Hom}_{\mathcal{T}}({\i}[i],\mathbb{I}_{{\i}}(F\otimes Y))\ar[d]^\cong\ar[r] &\operatorname{Hom}_{\mathcal{T}}({\i}[i],\mathbb{I}_{{\i}}(F\otimes X))\ar[d]^\cong\ar[r]&0 \\
0\ar[r]& \operatorname{Hom}_{\mathcal{T}}(Z[i],\mathbb{I}_{{\i}}(F))\ar[r]& \operatorname{Hom}_{\mathcal{T}}(Y[i],\mathbb{I}_{{\i}}(F))\ar[r] &\operatorname{Hom}_{\mathcal{T}}(X[i],\mathbb{I}_{{\i}}(F))\ar[r]&0 }$
\end{center}As ${\i}\in\mathrm{add}(M)$, the upper row is exact by Lemma \ref{lem1.2}, so is the lower row. Thus
  $\mathbb{I}_{{\i}}(F)\in\mathcal{I}$, and then $F\in\mathcal{F}$ by Lemma \ref{lem1.03}.
\end{proof}

Following the classical characterization of flat dimension for complexes, we define the flat dimension of objects as follows.

\begin{df}\label{lem:2.12} {\rm The \emph{flat dimension} $\mathrm{flatdim}_\mathcal{T}X$ of $X\in\mathcal{T}$ is defined by
\begin{center}$\mathrm{flatdim}_\mathcal{T}X=\mathrm{inf}\{n\in\mathbb{Z}\hspace{0.03cm}|\hspace{0.03cm}
\mathcal{H}^{-i}_M(U\otimes X)=0\ \textrm{for\ any}\ U\in\mathcal{T}^+\ \textrm{and}\ i>n-\mathrm{inf}U\}$.\end{center}}
\end{df}

The next result provides several vanishing criteria for flat dimension.

\begin{thm}\label{lem2.1} Let $X\in\mathcal{T}$ and $n\in\mathbb{Z}$. Then the following are equivalent:

$(1)$ $\mathrm{flatdim}_\mathcal{T}X\leq n$;

$(2)$ $\mathcal{H}^{-i}_M(U\otimes X)=0$ for any $U\in\mathcal{T}^b$ and $i>n-\mathrm{inf}U$;

$(3)$ $\mathcal{H}^{-i}_M(H\otimes X)=0$ for any $H\in\mathcal{H}_M$ and $i>n$.\\
In particular, $\mathrm{flatdim}_\mathcal{T}X=\mathrm{sup}\{n\in\mathbb{Z}\hspace{0.03cm}|\hspace{0.03cm}
\mathcal{H}^{-n}_M(H\otimes X)\neq0\ \textrm{for}\ H\in\mathcal{H}_M\}$.
\end{thm}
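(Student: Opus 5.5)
The plan is to prove the cycle $(1)\Rightarrow(2)\Rightarrow(3)\Rightarrow(1)$. The first two implications are formal. For $(1)\Rightarrow(2)$, note that $\mathcal{T}^b\subseteq\mathcal{T}^+$. For $(2)\Rightarrow(3)$, take $U=H\in\mathcal{H}_M$; then $\mathrm{inf}H=0$, or $H=0$, in which case $H\otimes X=0$ and there is nothing to prove. The formula for $\mathrm{flatdim}_\mathcal{T}X$ then follows by taking the least $n$ satisfying (3).

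The real content is $(3)\Rightarrow(1)$. Fix $U\in\mathcal{T}^+$ with $s=\mathrm{inf}U$; if $U=0$ there is nothing to show, so assume $s$ is finite. We must show $\mathcal{H}^{-i}_M(U\otimes X)=0$ for $i>n-s$.

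\emph{Step 1: bounded $U$.} First treat $U\in\mathcal{T}^b$ by induction on the amplitude, using the truncation triangle
$$\tau^{\leq s}(U)\to U\to\tau^{\geq s+1}(U)\to\tau^{\leq s}(U)[1],\qquad \tau^{\leq s}(U)\cong\mathcal{H}^s_M(U)[-s].$$
Tensoring with $X$ gives a triangle. Hypothesis (3) applied to $\mathcal{H}^s_M(U)[-s]\otimes X$ gives $\mathcal{H}^{-i}_M=0$ for $i>n-s$. The induction hypothesis applied to $\tau^{\geq s+1}(U)$, whose infimum is at least $s+1$, gives vanishing for $i>n-s-1$, and in particular for $i>n-s$. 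The long exact sequence of $\mathcal{H}_M$ then yields vanishing for $U\otimes X$.

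\emph{Step 2: general $U\in\mathcal{T}^+$.} This is where I expect the main obstacle. Fix $i>n-s$. I want to show that $\mathcal{H}^{-i}_M(U\otimes X)$ only sees a bounded truncation of $U$. Consider the triangle
$$\tau^{\leq m}(U)\otimes X\to U\otimes X\to\tau^{\geq m+1}(U)\otimes X\to.$$
For this to help, the term $\tau^{\geq m+1}(U)\otimes X$ must have vanishing $\mathcal{H}^{-i}_M$ and $\mathcal{H}^{-i-1}_M$ once $m$ is large. Lemma~\ref{lem1.14} only bounds $\mathrm{sup}$ of a tensor product, not $\mathrm{inf}$, so I would instead argue by duality.

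Apply $\mathbb{I}_{{\i}}$. By Lemma~\ref{lem1.4} and the fact that $\mathbb{Q}/\mathbb{Z}$ is a cogenerator,
$$\mathcal{H}^{-i}_M(U\otimes X)=0\iff \mathcal{H}^{i}_M\bigl(\mathbb{I}_{{\i}}(U\otimes X)\bigr)=0,$$
and $\mathbb{I}_{{\i}}(U\otimes X)\cong\mathcal{H}\mathrm{om}(U,\mathbb{I}_{{\i}}(X))$. Since ${\i}\in\mathrm{add}(M)$, Lemma~\ref{lem0.2} reduces this to showing $\mathrm{Hom}_\mathcal{T}(U[-i],\mathbb{I}_{{\i}}(X))=0$ for all $i>n-s$. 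In this form (3) reads
$$\mathrm{Hom}_\mathcal{T}(H[-i],\mathbb{I}_{{\i}}(X))=0\quad\text{for } H\in\mathcal{H}_M,\ i>n.$$
The object $U\in\mathcal{T}^{\geq s}$ can be written as a homotopy limit of its bounded truncations $\tau^{\leq m}(U)$, but Hom into $\mathbb{I}_{{\i}}(X)$ from a homotopy limit is awkward. Instead I would write $U\otimes X$ directly as a homotopy colimit of $\tau^{\leq m}(U)\otimes X$. Tensoring preserves coproducts, so it preserves homotopy colimits, and $\mathcal{H}^{-i}_M$ commutes with homotopy colimits because $M$ is compact. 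Hence
$$\mathcal{H}^{-i}_M(U\otimes X)\cong\mathrm{colim}_m\,\mathcal{H}^{-i}_M\bigl(\tau^{\leq m}(U)\otimes X\bigr),$$
and each term vanishes by Step~1, since $\mathrm{inf}\,\tau^{\leq m}(U)\geq s$. This also settles the difficulty with $\mathrm{inf}$ noted above. The point I would check carefully is that $U\cong\mathrm{hocolim}_m\,\tau^{\leq m}(U)$. This holds because the cone of the comparison map has $\mathcal{H}^\ell_M$ vanishing for every $\ell$, again by compactness of $M$, and $M$ is a generator.
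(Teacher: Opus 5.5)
Your proof is correct and has the same skeleton as the paper's: induction on amplitude for bounded $U$ (the paper's (3)$\Rightarrow$(2)), then passage to $U\in\mathcal{T}^+$ through the telescope of truncations $\tau^{\leq m}(U)$ (the paper's (2)$\Rightarrow$(1)); you were right to drop the duality detour, since it would need $M^\vee\in\mathrm{add}(M)$ and the unit in $\mathrm{add}(M)$, and this theorem assumes neither. The one difference is in the limit step: you use compactness of $M$ to get $\mathrm{Hom}_\mathcal{T}(M[i],U\otimes X)\cong\mathrm{colim}_m\mathrm{Hom}_\mathcal{T}(M[i],\tau^{\leq m}(U)\otimes X)$, which gives vanishing for all $i>n-\mathrm{inf}U$ at once, whereas the paper reads off the long exact sequence of the triangle $\bigoplus_j\tau^{\leq\mathrm{inf}U+j}(U)\to\bigoplus_j\tau^{\leq\mathrm{inf}U+j}(U)\to U\to$, obtains only $i>n+1-\mathrm{inf}U$, and recovers the boundary degree via $\tau^{\geq\mathrm{inf}U+1}(U)[-1]\to\mathcal{H}^{\mathrm{inf}U}_M(U)[-\mathrm{inf}U]\to U\to\tau^{\geq\mathrm{inf}U+1}(U)$; your version is slightly cleaner, and it also justifies $U\cong\mathrm{hocolim}_m\tau^{\leq m}(U)$, which the paper asserts without comment.
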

\begin{proof} (1) $\Rightarrow$ (3) is trivial. (3) $\Rightarrow$ (2) follows by the induction on $\mathrm{amp}U$.

(2) $\Rightarrow$ (1) Let $U\in\mathcal{T}^{+}$. There exists a sequence in  $\mathcal{T}$
$$\tau^{\leq\mathrm{inf}U}(U)\stackrel{\nu_0}\longrightarrow
\tau^{\leq\mathrm{inf}U+1}(U)\stackrel{\nu_1}\longrightarrow
\tau^{\leq\mathrm{inf}U+2}(U)\stackrel{\nu_2}\longrightarrow\cdots,$$ where $\tau^{\leq\mathrm{inf}U+j}(U)\in\mathcal{T}^\mathrm{b}$ and $\mathrm{inf}\tau^{\leq\mathrm{inf}U+j}(U)=\mathrm{inf}U$ for $j\geq 0$, it yields an exact triangle $$\bigoplus_{j\geq0}\tau^{\leq\mathrm{inf}U+j}(U)\longrightarrow
\bigoplus_{j\geq0}\tau^{\leq\mathrm{inf}U+j}(U)\longrightarrow U\longrightarrow\bigoplus_{j\geq0}\tau^{\leq\mathrm{inf}U+j}(U)[1].$$ Hence  the following exact sequence $$\bigoplus\mathcal{H}^{-i}_M(\tau^{\leq\mathrm{inf}U+j}(U)\otimes X)\rightarrow\mathcal{H}^{-i}_M(U\otimes X)
\rightarrow\bigoplus\mathcal{H}^{-i+1}_M(\tau^{\leq\mathrm{inf}U+j}(U)\otimes X)$$
 implies that $\mathcal{H}^{-i}_M(U\otimes X)=0$ for $i>n+1-\mathrm{inf}U$. Consider the exact triangle $$\tau^{\geq\mathrm{inf}U+1}(U)[-1]\rightarrow\mathcal{H}^{\mathrm{inf}U}_M(U)[-\mathrm{inf}U]
 \rightarrow U\rightarrow\tau^{\geq\mathrm{inf}U+1}(U).$$ For $i'=n+1-\mathrm{inf}U$, it follows from the exact sequence $$0=\bigoplus\mathcal{H}^{-i'}_M(\mathcal{H}^{\mathrm{inf}U}_M(U)[-\mathrm{inf}U]\otimes X)\rightarrow\mathcal{H}^{-i'}_M(U\otimes X)
\rightarrow\bigoplus\mathcal{H}^{-i'}_M(\tau^{\geq\mathrm{inf}U+1}(U)\otimes X)=0$$ that $\mathcal{H}^{-i}_M(U\otimes X)=0$ for $i>n-\mathrm{inf}U$.
\end{proof}

Let $X\in\mathcal{T}^{-}$ and set $s=\mathrm{sup}X$. As $\mathcal{P}\subseteq\mathcal{F}$, there is $f:F\rightarrow X$ with $F\in\mathcal{F}[-s]$, such that $\mathcal{H}^s_M(f)$ is surjective. We call $f:F\rightarrow X$ a \emph{spft morphism}.  A \emph{spft resolution} $F_\bullet$ of $X=X_0$ is a sequence of exact triangles
\begin{align}
X_{n+1}\stackrel{g_{n+1}}\longrightarrow F_n\stackrel{f_n}\longrightarrow X_n\longrightarrow X_{n+1}[1]
\label{exact03}
\tag{$\dag_n$}\end{align}
such that $f_n$ is a spft morphism for $n\geq 0$. Set $d_n=g_{n}f_n:F_n\rightarrow F_{n-1}$, and write $$\cdots\longrightarrow F_n\stackrel{d_n}\longrightarrow F_{n-1}\longrightarrow\cdots\longrightarrow F_1\stackrel{d_1}\longrightarrow F_0\longrightarrow X.$$

\begin{lem}\label{lem2.20'} Assume that $\mathrm{add}(M)=\mathrm{add}({\i})$, and let $X\in\mathcal{T}^-$. Then $\mathrm{flatdim}_\mathcal{T}X\geq-\mathrm{sup}X$ and $X\in\mathcal{F}[-\mathrm{sup}X]$ if and only if $\mathrm{flatdim}_\mathcal{T}X=-\mathrm{sup}X$.
\end{lem}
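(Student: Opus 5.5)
We will work throughout with the characterization in Theorem \ref{lem2.1}: $\mathrm{flatdim}_\mathcal{T}X$ is the largest $n$ with $\mathcal{H}^{-n}_M(H\otimes X)\neq0$ for some $H\in\mathcal{H}_M$. Set $s=\mathrm{sup}X$.

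\emph{The inequality.} Take $H=\mathcal{H}^0_M(\i)$. By Lemma \ref{lem2.20}(1) applied with $F=\i\in\mathcal{P}\subseteq\mathcal{F}$ (legitimate since $\mathrm{add}(M)=\mathrm{add}(\i)$), we get
\[\mathcal{H}^s_M(H\otimes X)\cong\mathcal{H}^0_M(\mathcal{H}^0_M(\i)\otimes\mathcal{H}^s_M(X)).\]
More simply, $\i$ is the tensor unit, so it suffices to show $\mathcal{H}^s_M(\mathcal{H}^0_M(\i)\otimes X)\neq0$. The triangle $\tau^{\leq -1}(\i)\to\i\to\mathcal{H}^0_M(\i)\to$ tensored with $X$ gives $\tau^{\leq-1}(\i)\otimes X\in\mathcal{T}^{\leq s-1}$ by Lemma \ref{lem1.14}. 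Hence $\mathcal{H}^s_M(X)\cong\mathcal{H}^s_M(\mathcal{H}^0_M(\i)\otimes X)\neq0$. This shows $\mathrm{flatdim}_\mathcal{T}X\geq -s$.

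\emph{Forward direction.} Suppose $X\in\mathcal{F}[-s]$, so $X=F[-s]$ with $F\in\mathcal{F}$. For $H\in\mathcal{H}_M$, Lemma \ref{lem2.20}(3) gives
\[\mathcal{H}^{j}_M(H\otimes F)\cong\mathcal{H}^0_M(F)\otimes\mathcal{H}^j_M(H),\]
which vanishes for $j\neq0$. Hence $H\otimes X$ is concentrated in degree $s$, so $\mathcal{H}^{-i}_M(H\otimes X)=0$ for $i>-s$. Thus $\mathrm{flatdim}_\mathcal{T}X\leq -s$, and equality follows from the inequality above.

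\emph{Converse.} Assume $\mathrm{flatdim}_\mathcal{T}X=-s$; after shifting we may take $s=0$, so $X\in\mathcal{T}^{\leq0}$ and $\mathcal{H}^{j}_M(U\otimes X)=0$ for $j>\mathrm{sup}$-type bounds given by Theorem \ref{lem2.1}(2). By Lemma \ref{lem1.03}(1) it suffices to show $\mathbb{I}_{\i}(X)\in\mathcal{I}$. By Lemma \ref{lem1.4}, $\mathbb{I}_{\i}(X)\in\mathcal{T}^{\geq0}$, and an object of $\mathcal{T}^{\geq0}$ lies in $\mathcal{I}$ exactly when $\mathrm{Hom}_\mathcal{T}(H,\mathbb{I}_\i(X)[j])=0$ for all $H\in\mathcal{H}_M$ and $j>0$. (Injectivity in the heart together with the vanishing of higher $\mathrm{Hom}$ from the heart forces $\mathbb{I}_\i(X)\cong\mathcal{H}^0_M(\cdots)$ injective, i.e. in $\mathrm{Prod}(M^\ast)$, using Lemma \ref{lem0.2} and the dual of the sppj argument.) By the defining property of $\mathbb{I}_\i$,
\[\mathrm{Hom}_\mathcal{T}(H,\mathbb{I}_\i(X)[j])\cong\mathrm{Hom}_\mathcal{T}(\i,H[-j]\otimes X)^+ .\]
Since $\i\in\mathrm{add}(M)$, this is a summand of $\mathcal{H}^{-j}_M(H\otimes X)^+$, which vanishes for $j>0$ by hypothesis. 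Hence $\mathbb{I}_\i(X)\in\mathcal{I}$ and $X\in\mathcal{F}$.

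The main obstacle is the claim that an object $Y\in\mathcal{T}^{\geq0}$ with $\mathrm{Hom}(H,Y[j])=0$ for all $H\in\mathcal{H}_M$ and $j>0$ lies in $\mathcal{I}$. I would prove it by taking an ifj morphism $Y\to I$ with $I\in\mathcal{I}$ and cone $Z$. The vanishing conditions should force $Z\in\mathcal{T}^{\geq1}$ with $\mathrm{Hom}(Z,Y[1])=0$, so the triangle splits and $Y$ is a summand of $I$. Alternatively, one could cite the corresponding characterization from \cite{Y1}.
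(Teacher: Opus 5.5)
Your proof of $\mathrm{flatdim}_\mathcal{T}X\geq-\mathrm{sup}X$ is correct, and more direct than the paper's, which goes through $\mathbb{I}_{{\i}}(X)$ and Lemma~\ref{lem1.4}. Tensoring $\tau^{\leq-1}({\i})\to{\i}\to\mathcal{H}^0_M({\i})\to$ with $X$ and using Lemma~\ref{lem1.14} gives $\mathcal{H}^s_M(\mathcal{H}^0_M({\i})\otimes X)\cong\mathcal{H}^s_M(X)\neq0$. Your forward implication is the paper's argument. The gap is in the converse, at the step you flag yourself. The claim that $Y\in\mathcal{T}^{\geq0}$ with $\mathrm{Hom}_\mathcal{T}(H,Y[j])=0$ for all $H\in\mathcal{H}_M$ and $j>0$ lies in $\mathcal{I}$ is true, but you have not proved it.

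Your parenthetical reason treats $\mathbb{I}_{{\i}}(X)$ as an injective object of the heart. That only works when $M$ is tilting: then $M\in\mathcal{H}_M$, and the hypothesis kills $\mathcal{H}^j_M(Y)$ for $j>0$. Here $M$ is merely silting, and objects of $\mathcal{I}$ need not lie in $\mathcal{H}_M$. For example, take $M={\i}=A$ a non-positive dg ring and $X=A$. Then $X$ satisfies your hypotheses, but $\mathcal{H}^1_M(\mathbb{I}_{{\i}}(A))\cong H^{-1}(A)^+$ by Lemma~\ref{lem1.4}.

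The splitting sketch also fails as written. For an ifj morphism $Y\to I$, the cone $Z$ is a priori only in $\mathcal{T}^{\geq0}$, not $\mathcal{T}^{\geq1}$. Vanishing of $\mathrm{Hom}_\mathcal{T}(H[-k],Y[1])$ for heart objects $H$ and $k\geq0$ gives $\mathrm{Hom}_\mathcal{T}(Z,Y[1])=0$ by d\'evissage only for bounded $Z$. Your $Z$ can be unbounded above, since $X$ need not be bounded below and $I$ need not be bounded above. Passing to a homotopy colimit of truncations then leaves a $\lim^1$ term that you have not controlled.

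The fix is to keep the full strength of the definition of $\mathrm{flatdim}_\mathcal{T}$ rather than reducing to heart objects via Theorem~\ref{lem2.1}. With $s=0$, $\mathrm{flatdim}_\mathcal{T}X=0$ gives $\mathcal{H}^{-1}_M(U\otimes X)=0$ for every $U\in\mathcal{T}^{\geq0}$, bounded or not. Since ${\i}\in\mathrm{add}(M)$, it follows that $\mathrm{Hom}_\mathcal{T}(U,\mathbb{I}_{{\i}}(X)[1])\cong\mathrm{Hom}_\mathcal{T}({\i}[1],U\otimes X)^+=0$. Take $U=Z$, the cone of an ifj morphism $\mathbb{I}_{{\i}}(X)\to I$: the triangle splits and $\mathbb{I}_{{\i}}(X)\in\mathcal{I}$.

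This is the paper's argument in dual form. The paper takes a spft triangle $Y\to F\to X[s]\xrightarrow{g}Y[1]$ and uses the same vanishing with $U=\mathbb{I}_{{\i}}(Y)\in\mathcal{T}^{\geq0}$ to get $\mathbb{I}_{{\i}}(g)=0$. So $g$ is phantom, and $X[s]$ is a pure quotient of $F$, hence lies in $\mathcal{F}$.

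Alternatively, your heart-only claim can be proved directly. First, $\mathcal{H}^0_M(Y)$ is injective in $\mathcal{H}_M$. So you can choose $Y\to I$ with $I\in\mathcal{I}$ inducing an isomorphism on $\mathcal{H}^0_M$. Its cone $Z$ then satisfies $\mathrm{Hom}_\mathcal{T}(H,Z[j])=0$ for all $H\in\mathcal{H}_M$ and all $j\in\mathbb{Z}$. Testing against $H=\mathcal{H}^{\mathrm{inf}Z}_M(Z)$ forces $Z=0$.
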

\begin{proof} For $i<\mathrm{inf}\mathbb{I}_{{\i}}(X)=-\mathrm{sup}X$, as $\mathrm{Hom}_\mathcal{T}(\mathcal{H}_M[-i],\mathbb{I}_{{\i}}(X))=0$, $\mathrm{Hom}_\mathcal{T}({\i},\mathcal{H}_M[-i]\otimes X)^+=0$. Since
 $M\in\mathrm{add}({\i})$, it implies that $\mathrm{Hom}_\mathcal{T}(M[i],\mathcal{H}_M\otimes X)=0$ and so $\mathcal{H}^{-i}_M(\mathcal{H}_M\otimes X)=0$ by Lemma \ref{lem0.2}. Thus $\mathrm{flatdim}_\mathcal{T}X\geq-\mathrm{sup}X$ by Theorem \ref{lem2.1}.
Let $X\in\mathcal{F}[-\mathrm{sup}X]$. As $\mathrm{flatdim}_\mathcal{T}X[\mathrm{sup}X]=\mathrm{flatdim}_\mathcal{T}X+\mathrm{sup}X$, it follows from Lemma \ref{lem2.20} that $\mathrm{flatdim}_\mathcal{T}X=-\mathrm{sup}X$. Conversely, consider the exact triangle $Y\rightarrow F\stackrel{f}\rightarrow X[\mathrm{sup}X]\stackrel{g}\rightarrow Y[1]$ with $f$ a spft morphism. Then $\mathrm{sup}Y\leq0$ and the triangle $\mathbb{I}_{{\i}}(Y)[-1]\rightarrow \mathbb{I}_{{\i}}(X[\mathrm{sup}X])\rightarrow\mathbb{I}_{{\i}}(F)\rightarrow \mathbb{I}_{{\i}}(Y)$ is exact.  As $\mathrm{flatdim}_\mathcal{T}X=-\mathrm{sup}X$, $\mathrm{Hom}_\mathcal{T}(M[i],\mathbb{I}_{{\i}}(Y)\otimes X)=0$ for $i>-\mathrm{sup}X$, it implies that
$\mathrm{Hom}_\mathcal{T}(\mathbb{I}_{{\i}}(Y)[-1], \mathbb{I}_{{\i}}(X[\mathrm{sup}X]))\cong
\mathrm{Hom}_\mathcal{T}({\i}[1-\mathrm{sup}X],\mathbb{I}_{{\i}}(Y)\otimes X)^+=0$ as ${\i}\in\mathrm{add}(M)$. Hence $g$ is phantom  and $X[\mathrm{sup}X]\in\mathcal{F}$ by Lemma \ref{lem1.03}.
\end{proof}

\begin{lem}\label{lem3.4} Assume $\mathrm{add}(M)=\mathrm{add}({\i})$. Let $X\in\mathcal{T}^{-}$ and $X'\rightarrow F\stackrel{f}\rightarrow X\rightarrow X'[1]$ be an exact triangle with $f$ a spft morphism. If $\mathrm{flatdim}_\mathcal{T}X+\mathrm{sup}X\geq1$, then $\mathrm{flatdim}_\mathcal{T}X'=\mathrm{flatdim}_\mathcal{T}X-1$.
\end{lem}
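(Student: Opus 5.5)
Write $s=\mathrm{sup}X$ and $m=\mathrm{flatdim}_\mathcal{T}X$, so the hypothesis is $m\geq 1-s$. The plan is to tensor the triangle $X'\rightarrow F\rightarrow X\rightarrow X'[1]$ with an arbitrary $H\in\mathcal{H}_M$ and compare negative cohomology through the long exact sequence
$$\mathcal{H}^{-i-1}_M(H\otimes F)\rightarrow\mathcal{H}^{-i-1}_M(H\otimes X)\rightarrow\mathcal{H}^{-i}_M(H\otimes X')\rightarrow\mathcal{H}^{-i}_M(H\otimes F)\rightarrow\mathcal{H}^{-i}_M(H\otimes X).$$
By Theorem \ref{lem2.1}(3), both flat dimensions can be read off from $\mathcal{H}^{-i}_M(H\otimes -)$ for $H\in\mathcal{H}_M$.

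First, control the $F$-terms. Since $F\in\mathcal{F}[-s]$, Lemma \ref{lem2.20}(3), applied to $F[s]$ and with a shift, gives $\mathcal{H}^{j}_M(H\otimes F)\cong \mathcal{H}^0_M(F[s])\otimes \mathcal{H}^{j+s}_M(H)$. This vanishes unless $j=-s$, where it equals $\mathcal{H}^0_M(F[s])\otimes H$. So $\mathcal{H}^{-i}_M(H\otimes F)=0$ for $i\neq s$. Consequently $\mathcal{H}^{-i}_M(H\otimes X')\cong\mathcal{H}^{-i-1}_M(H\otimes X)$ for every $i$ with $i\neq s$ and $i+1\neq s$.

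Next, get the upper bound $\mathrm{flatdim}_\mathcal{T}X'\leq m-1$. For $i>m-1$ we have $i\geq m\geq 1-s$; in particular $i> -s$. For $i>m-1$ we also have $\mathcal{H}^{-i-1}_M(H\otimes X)=0$, since $i+1>m$. The problematic index is $i=s$, where $\mathcal{H}^{-s}_M(H\otimes F)\neq0$ could contribute. Here I would use that $f$ is a spft morphism. By Lemma \ref{lem2.20}(1)(3) and the right exactness of $\otimes$ on the heart, the map $\mathcal{H}^{-s}_M(H\otimes F)\rightarrow\mathcal{H}^{-s}_M(H\otimes X)$ is identified with $\mathcal{H}^{s}_M(F)\otimes H\rightarrow \mathcal{H}^s_M(X)\otimes H$. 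I would get this identification by applying the analogue of Lemma \ref{lem2.20}(1) to $X$, using $\mathrm{sup}(H\otimes X)\leq s$ from Lemma \ref{lem1.14}, which reduces the top cohomology to $\mathcal{H}^0_M(H\otimes\mathcal{H}^s_M(X))$. Because $\mathcal{H}^s_M(F)\rightarrow\mathcal{H}^s_M(X)$ is surjective, this map is surjective. Hence the careful index bookkeeping should give the following: for $i\geq m$ with $i\neq s-1$, the sequence directly yields $\mathcal{H}^{-i}_M(H\otimes X')=0$. In the worrying case $i=s-1$ (top degree in $X'$), one has $\mathcal{H}^{-(s-1)}_M(H\otimes X')$ sitting between $\mathcal{H}^{-s}_M(H\otimes F)\to \mathcal{H}^{-s}_M(H\otimes X)$ (surjective, so no contribution from the cokernel side) and $\mathcal{H}^{-s+1}_M(H\otimes F)=0$. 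I need to check the orientation of the sequence carefully here: the surjectivity kills the connecting contribution.

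Then the lower bound. Take $i=m-1$. I want some $H$ with $\mathcal{H}^{-m+1}_M(H\otimes X')\neq 0$. There is an $H$ with $\mathcal{H}^{-m}_M(H\otimes X)\neq0$, and the sequence gives $\mathcal{H}^{-m+1}_M(H\otimes X')\rightarrow \mathcal{H}^{-m}_M(H\otimes X)\rightarrow\mathcal{H}^{-m}_M(H\otimes F)$. The last term vanishes as long as $m\neq s$. The main obstacle is the degenerate case $m=s$, where $X$ is close to flat. If $m\geq s+1$, or $m\neq s$ in general, the surjection settles it; the hypothesis $m\geq1-s$ should be exactly what rules out the index collision when $s\leq0$. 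If the collision does occur, I would instead fall back on Lemma \ref{lem2.20'}: if $X'$ had flat dimension $\leq m-2$, then rerunning the upper-bound argument backwards through the triangle would give $\mathrm{flatdim}_\mathcal{T}X\leq m-1$, a contradiction. The bound $\mathrm{flatdim}_\mathcal{T}X'\geq-\mathrm{sup}X'\geq 1-s$ from Lemma \ref{lem2.20'}, together with the hypothesis, guarantees the degrees stay in range. Combining the two bounds gives $\mathrm{flatdim}_\mathcal{T}X'=m-1$.
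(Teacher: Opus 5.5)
\textbf{Verdict: right strategy, but a sign error breaks the execution.} Your strategy is the paper's: tensor the triangle with $H\in\mathcal{H}_M$, use Lemma \ref{lem2.20} to see that $H\otimes F$ has a single nonzero cohomology object, and read both bounds off Theorem \ref{lem2.1}(3). The error is in locating that cohomology. Write $F=F_0[-s]$ with $F_0\in\mathcal{F}$. Then $\mathcal{H}^{j}_M(H\otimes F)\cong\mathcal{H}^{j-s}_M(H\otimes F_0)\cong\mathcal{H}^0_M(F_0)\otimes\mathcal{H}^{j-s}_M(H)$, so $H\otimes F$ is concentrated in degree $j=s$. That is, $\mathcal{H}^{-i}_M(H\otimes F)=0$ for $i\neq -s$, not for $i\neq s$.

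\textbf{The patches you add for the resulting cases do not hold.} Your troublesome index $i=s$ and degenerate case $m=s$ are artifacts of the sign error, and the fixes you offer for them fail even on their own terms.
\begin{itemize}
\item In the upper bound, when $s\geq m$, killing $\mathcal{H}^{-s}_M(H\otimes X')$ needs the kernel of $\mathcal{H}^{-s}_M(H\otimes F)\to\mathcal{H}^{-s}_M(H\otimes X)$ to vanish. Surjectivity of that map says nothing about its kernel. Your identification of this map with $\mathcal{H}^s_M(F)\otimes H\to\mathcal{H}^s_M(X)\otimes H$ also conflates degrees $-s$ and $s$.
\item In the lower bound, the case $m=s$ is not excluded by $m\geq 1-s$ once $s\geq 1$, and your fallback is circular. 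Running the sequence backwards from $\mathcal{H}^{-m+1}_M(H\otimes X')=0$ only shows that $\mathcal{H}^{-m}_M(H\otimes F)\to\mathcal{H}^{-m}_M(H\otimes X)$ is onto. Concluding $\mathcal{H}^{-m}_M(H\otimes X)=0$ then requires exactly the vanishing of $\mathcal{H}^{-m}_M(H\otimes F)$ that fails in that case.
\item The triangle gives only $\mathrm{sup}X'\leq s$. So Lemma \ref{lem2.20'} yields $\mathrm{flatdim}_\mathcal{T}X'\geq -s$, not $\geq 1-s$.
\end{itemize}
As written, your lower bound has no valid argument in the case you yourself flag.

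\textbf{With the correct degree, the proof is immediate and is the paper's.} Since $m\geq 1-s>-s$, for every $i\geq m$ both outer terms of $\mathcal{H}^{-i-1}_M(H\otimes X)\to\mathcal{H}^{-i}_M(H\otimes X')\to\mathcal{H}^{-i}_M(H\otimes F)$ vanish. Hence $\mathrm{flatdim}_\mathcal{T}X'\leq m-1$. For the lower bound, pick $H$ with $\mathcal{H}^{-m}_M(H\otimes X)\neq 0$. Since $m\neq -s$, $\mathcal{H}^{-m}_M(H\otimes F)=0$, so $\mathcal{H}^{-m}_M(H\otimes X)$ injects into $\mathcal{H}^{-m+1}_M(H\otimes X')$, which is therefore nonzero. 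The hypothesis $\mathrm{flatdim}_\mathcal{T}X+\mathrm{sup}X\geq 1$ is exactly what keeps the single nonzero $F$-term (at $i=-s$) out of both computations. Neither the surjectivity of $\mathcal{H}^s_M(f)$ nor Lemma \ref{lem2.20'} is needed.
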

\begin{proof} Set $s=\mathrm{sup}X$ and $\mathrm{flatdim}_\mathcal{T}X+s=d\geq1$, then $\mathcal{H}^{-i}_M(\mathcal{H}_M\otimes X')\cong\mathcal{H}^{-i-1}_M(\mathcal{H}_M\otimes X)=0$ for $i>-s+d-1$ by Lemma \ref{lem2.20} and Theorem \ref{lem2.1}, it implies that $\mathrm{flatdim}_\mathcal{T}X'\leq d-s-1$. Also there is $H\in\mathcal{H}_M$, such that $\mathcal{H}^{-d+s}_M(H\otimes X)\neq0$. Then the exact sequence
$$0=\mathcal{H}^{-d+s}_M(H\otimes F)\rightarrow\mathcal{H}^{-d+s}_M(H\otimes X)\rightarrow\mathcal{H}^{-d+s+1}_M(H\otimes X')$$ yields that
$\mathcal{H}^{-d+s+1}_M(H\otimes X')\neq0$, so $\mathrm{flatdim}_\mathcal{T}X'=d-s-1$.
\end{proof}

The next theorem gives a resolution-theoretic characterization of $\mathrm{flatdim}_\mathcal{T}X$ for $X\in\mathcal{T}^-$, analogous to the classical complex case and essential in Section 3.

\begin{thm}\label{lem2.21} Let $X\in\mathcal{T}^-$ and $d\geq0$. If $\mathrm{add}(M)=\mathrm{add}({\i})$, then the next are equivalent:

$(1)$ $\mathrm{flatdim}_\mathcal{T}X+\mathrm{sup}X=d$;

$(2)$  For any spft resolution $F_\bullet$ of $X$, there exists $e\geq0$ such that $X_e\in\mathcal{F}[-\mathrm{sup}X_e]$, $e+\mathrm{sup}X-\mathrm{sup}X_e=d$ and
$g_e:X_e\rightarrow F_{e-1}$
 is not a pure monomorphism;

$(3)$  $X$ has a spft resolution $X_{e}\rightarrow F_{e-1}\rightarrow\cdots\rightarrow F_0\rightarrow X$ such that $X_e\in\mathcal{F}[-\mathrm{sup}X_e]$, $e+\mathrm{sup}X-\mathrm{sup}X_e=d$ and
$g_e:X_e\rightarrow F_{e-1}$ is not a pure monomorphism;

$(4)$ $d$ is the smallest number which satisfies $X\in\mathcal{F}[-\mathrm{sup}X]\ast\cdots\ast\mathcal{F}[-\mathrm{sup}X+d]$.
\end{thm}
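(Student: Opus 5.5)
We prove the cycle $(1)\Rightarrow(2)\Rightarrow(3)\Rightarrow(4)\Rightarrow(1)$. The tools are Lemmas \ref{lem2.20'} and \ref{lem3.4}, which let us run an induction along a spft resolution. Throughout, write $\phi(Y)=\mathrm{flatdim}_\mathcal{T}Y+\mathrm{sup}Y$. By Lemma \ref{lem2.20'}, $\phi(Y)\geq0$, with equality exactly when $Y\in\mathcal{F}[-\mathrm{sup}Y]$.

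For $(1)\Rightarrow(2)$, fix a spft resolution $F_\bullet$ of $X$. In a triangle $(\dag_j)$ the map $f_j$ is a spft morphism, so $\mathrm{sup}X_{j+1}\leq\mathrm{sup}X_j$. Lemma \ref{lem3.4} gives $\mathrm{flatdim}_\mathcal{T}X_{j+1}=\mathrm{flatdim}_\mathcal{T}X_j-1$ as long as $\phi(X_j)\geq1$. Summing, we get
$$\phi(X_j)=d-j+\mathrm{sup}X_j-\mathrm{sup}X+\dots$$
More precisely,
$$\mathrm{flatdim}_\mathcal{T}X_j=\mathrm{flatdim}_\mathcal{T}X-j,\qquad\text{hence}\qquad \phi(X_j)=d-j-\mathrm{sup}X+\mathrm{sup}X_j .$$
Since $\mathrm{sup}X_j\leq\mathrm{sup}X$, $\phi(X_j)$ strictly decreases with $j$ and stays $\geq0$. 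So there is a first $e$ with $\phi(X_e)=0$, i.e. $X_e\in\mathcal{F}[-\mathrm{sup}X_e]$, and then $e+\mathrm{sup}X-\mathrm{sup}X_e=d$. It remains to see that $g_e$ is not a pure monomorphism; here we need $e\geq1$. Suppose $(\dag_{e-1})$ were pure. Then its third map $X_{e-1}\to X_e[1]$ is phantom, and Lemma \ref{lem1.03}(1) (closure under pure quotients and pure extensions), together with the sup estimates, would put $X_{e-1}$ in $\mathcal{F}[-\mathrm{sup}X_{e-1}]$. This contradicts the minimality of $e$. The case $d=0$, where $e=0$, needs the convention that the condition on $g_e$ is vacuous.

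The step $(2)\Rightarrow(3)$ is immediate, since spft resolutions exist. For $(3)\Rightarrow(4)$, unwind the triangles $(\dag_j)$ for $j<e$: we have $F_j\in\mathcal{F}[-\mathrm{sup}X_j]$ and $X_e\in\mathcal{F}[-\mathrm{sup}X_e]$. By induction, $X\in \mathcal{F}[-s_0]\ast\mathcal{F}[-s_1+1]\ast\cdots\ast\mathcal{F}[-s_e+e]$, where $s_j=\mathrm{sup}X_j$. Using $s_j\leq s_0$ and the fact that $\mathcal{F}[-s]\ast$ can be padded with zeros, these shifts fit inside $\mathcal{F}[-\mathrm{sup}X]\ast\cdots\ast\mathcal{F}[-\mathrm{sup}X+d]$, because $e+s_0-s_e=d$.

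For $(4)\Rightarrow(1)$, first show that any $Y\in\mathcal{F}[-\mathrm{sup}X]\ast\cdots\ast\mathcal{F}[-\mathrm{sup}X+k]$ has $\mathcal{H}^{-i}_M(H\otimes Y)=0$ for $i>k-\mathrm{sup}X$ and all $H\in\mathcal{H}_M$. This follows from Lemma \ref{lem2.20}(3), since $H\otimes F$ is concentrated in degree $0$, combined with the long exact sequence on triangles. By Theorem \ref{lem2.1}, $\phi(X)\leq k$, so $\phi(X)\leq d$. Conversely, $(1)\Rightarrow(3)\Rightarrow(4)$ applied to $d'=\phi(X)$ shows that $X$ lies in the length-$d'$ filtration. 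Minimality of $d$ then forces $d'=d$.

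The main obstacle is the sup bookkeeping. It enters in two places: matching the shifted pieces $\mathcal{F}[-s_j+j]$ into the prescribed window, and proving that $g_e$ is not a pure monomorphism. If the closure argument for the second point fails, the fallback is to dualize with $\mathbb{I}_{\i}$ and show that a split $\mathbb{I}_{\i}$-triangle makes $\phi(X_{e-1})=0$ directly.
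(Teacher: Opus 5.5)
Your steps $(1)\Rightarrow(2)\Rightarrow(3)$ and $(4)\Rightarrow(1)$ are sound. The gap is in $(3)\Rightarrow(4)$: it proves only the membership $X\in\mathcal{F}[-\mathrm{sup}X]\ast\cdots\ast\mathcal{F}[-\mathrm{sup}X+d]$, not that $d$ is the \emph{smallest} such number. Nothing else in your cycle supplies this, so from (3) you only obtain $\phi(X)\leq d$.

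The symptom is that the clause ``$g_e$ is not a pure monomorphism'' is never used in this step, and without it minimality is false. Take $X=M$ and the spft morphism $[1,0]\colon M\oplus M\to M$. Then $X_1\cong M\in\mathcal{F}$, so $e=1$ and $e+\mathrm{sup}X-\mathrm{sup}X_1=1$, while the minimal number in (4) is $0$. The only clause of (3) that fails here is non-purity, since $g_1$ is split. (The paper's own $(3)\Rightarrow(4)$ gets minimality from $\mathrm{flatdim}_\mathcal{T}X+s=d$ and $\mathcal{H}^{-d+s}_M(H\otimes X)\neq0$, i.e.\ from (1). So this is exactly the point where an argument starting from (3) is needed.)

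The missing idea is to use non-purity to show $\phi(X_j)\geq1$ for all $j<e$.

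First, the argument of Lemma \ref{lem1.03}(2) works verbatim for spft morphisms. Suppose $Y\in\mathcal{F}[-s]$ with $s=\mathrm{sup}Y$, and $Y'\to F\to Y\xrightarrow{h}Y'[1]$ is a triangle with $F\to Y$ spft.
\begin{itemize}
\item Then $\mathrm{sup}Y'\leq s$, so $\mathrm{inf}\,\mathbb{I}_{{\i}}(Y'[1])\geq1-s$ by Lemma \ref{lem1.4}.
\item Also $\mathbb{I}_{{\i}}(Y)\in\mathcal{I}[s]$, so Lemma \ref{lem0.2} gives $\mathrm{Hom}_\mathcal{T}(\mathbb{I}_{{\i}}(Y'[1]),\mathbb{I}_{{\i}}(Y))=0$.
\item Hence $\mathbb{I}_{{\i}}(h)=0$, and the triangle is pure by Lemma \ref{lem1.22}.
\item Thus $Y'$ is a pure subobject of $F\in\mathcal{F}[-s]$, so $Y'\in\mathcal{F}[-s]$ by Lemma \ref{lem1.03}(1).
\item For $Y'\neq0$, your vanishing estimate gives $\mathrm{flatdim}_\mathcal{T}Y'\leq-s$, and Lemma \ref{lem2.20'} gives $\mathrm{flatdim}_\mathcal{T}Y'\geq-\mathrm{sup}Y'\geq-s$. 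So $\mathrm{sup}Y'=s$ and $\phi(Y')=0$.
\end{itemize}

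Consequently, if $\phi(X_j)=0$ for some $j<e$, then all of $(\dag_j),\dots,(\dag_{e-1})$ are pure, so $g_e$ is a pure monomorphism, contradicting (3). Hence $\phi(X_j)\geq1$ for every $j<e$. Lemma \ref{lem3.4} then applies at every step, exactly as in your $(1)\Rightarrow(2)$, and gives $\phi(X)=\phi(X_e)+e+\mathrm{sup}X-\mathrm{sup}X_e=d$. This proves $(3)\Rightarrow(1)$, and minimality in (4) then follows from your own vanishing estimate.
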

\begin{proof} (1) $\Rightarrow$ (2) Set $s=\mathrm{sup}X$. We proceed by induction on $d$. The case of $d=0$ is by Lemma  \ref{lem2.20'}. Assume $d\geq1$. For any spft resolution $F_\bullet$ of $X$, consider the exact triangle $$X_1\rightarrow F_0\rightarrow X\rightarrow X_1[1].$$ Then $\mathrm{flatdim}_\mathcal{T}X_1=d-s-1=-\mathrm{sup}X_1+(d-s-1+\mathrm{sup}F_1)$ by Lemma \ref{lem3.4}.
 As $\cdots\rightarrow F_n\rightarrow\cdots\rightarrow F_1\rightarrow X_1$ is a spft resolution of $X_1$ and $\mathrm{sup}F_1\leq\mathrm{sup}F_0$, it follows by the induction that there is $e\geq0$ such that $X_e\in\mathcal{F}[-\mathrm{sup}X_e]$, $e-1+\mathrm{sup}X_1-\mathrm{sup}F_e=d-1-s+\mathrm{sup}F_1$ and $g_e:X_e\rightarrow F_{e-1}$
 is not a pure monomorphism.

(2) $\Rightarrow$ (3) is trivial.

(3) $\Rightarrow$ (4) We proceed by induction on $d$. If $d=0$, then $X\in\mathcal{F}[-\mathrm{sup}X]$ by Lemma \ref{lem2.20'}.
  Assume that $d\geq1$ and $X'\stackrel{b}\rightarrow F'\stackrel{a}\rightarrow X\rightarrow X'[1]$ is an exact triangle with $a$ a spft morphism. Then the following triangle $$X'\oplus F'\stackrel{\left[\begin{smallmatrix}b &0\\ 0 &1\end{smallmatrix}\right]}\longrightarrow F'\oplus F'\stackrel{[a,0]}\longrightarrow X\longrightarrow (X'\oplus F')[1]$$is exact. Set $F=F'\oplus F'$ and  $X_1=X'\oplus F'$. Then $\mathrm{sup}X_1=s$ and $f=[a,0]$ is a spft morphism, and so $\mathrm{flatdim}_\mathcal{T}X_1+s=d-1$ by Lemma \ref{lem3.4}. Thus $X_1\in\mathcal{F}[-s]\ast\cdots\ast\mathcal{F}[-s+d-1]$ by the induction, so $X\in\mathcal{F}[-s]\ast\cdots\ast\mathcal{F}[-s+d]$. If $X\in\mathcal{F}[-s]\ast\cdots\ast\mathcal{F}[-s+d-1]$, then  $\mathcal{H}^{-d+s}_M(H\otimes X)=0$ by Lemma \ref{lem2.20},  a contradiction.

(4) $\Rightarrow$ (1) We proceed by induction on $d$. The case of $d=0$ is by Lemma \ref{lem2.20'}. Assume $d\geq1$. There is an exact triangle $Y\rightarrow F\stackrel{f}\rightarrow X\rightarrow Y[1]$, such that $d-1$ is the smallest number that satisfies $Y\in\mathcal{F}[-s]\ast\cdots\ast\mathcal{F}[-s+d-1]$ and $F\in\mathcal{F}[-s]$. Then $\mathrm{flatdim}_\mathcal{T}Y+s=d-1$ by the induction. Note that $f$ is a spft morphism, it follows from Lemma \ref{lem3.4} that $\mathrm{flatdim}_\mathcal{T}X+s=\mathrm{flatdim}_\mathcal{T}Y+s+1=d$.
\end{proof}

\begin{rem}\label{lem:2.0}{\rm (1) For a ring $R$, $\mathcal{F}$ is precisely the class of flat $R$-module, and $\mathrm{flatdim}_RX$ is the classical flat dimension of a complex $X\in\mathrm{D}(R)$.

 (2) For a non-positive dg ring $A$, $\mathcal{F}$ is the class defined in \cite[Definition 4.5]{Mi18} by Lemma \ref{lem2.20'}. So $\mathrm{flatdim}_AX$ is the flat dimension of a dg module $X\in\mathrm{D}(A)$, see \cite[Definition 2.1]{BSSW}.

 (3) If $\mathrm{add}(M)=\mathrm{add}({\i})$, then $\mathcal{F}$ agrees with the class introduced in \cite{Y1} by Lemma  \ref{lem1.03}. It then follows from Theorem \ref{lem2.21} that for $X\in\mathcal{T}^{-}$, the number $\mathrm{flatdim}_\mathcal{T}X+\mathrm{sup}X$ coincides with the flat dimension defined in \cite[Definition 3.10]{Y1}.

(4)  Let $M\in\mathcal{T}^b$ and $\mathrm{add}(M)=\mathrm{add}({\i})$. Following \cite{Y1}, $\mathcal{F}'$ denotes the class of objects in $\mathcal{T}$ satisfying $F=0$, or $\mathrm{sup}F=0$ and $\mathrm{Hom}_\mathcal{T}(F,C^\ast[i])=0$ for all $C\in\mathcal{T}^c$ and $i>\mathrm{sup}C^\ast$. Let $F\in\mathcal{F}$ and $C\in\mathcal{T}^c$. Then $C^*\cong\mathbb{I}_{{\i}}(C^\vee)$ by Remark \ref{lem:3.2}(2). So  by Lemma  \ref{lem0.2}, $$\mathrm{Hom}_\mathcal{T}(F,C^*[i])\cong
 \mathrm{Hom}_\mathcal{T}(F,\mathbb{I}_{{\i}}(C^\vee)[i])\cong
 \mathrm{Hom}_\mathcal{T}(C^\vee\otimes F,{\i}[i])\cong
 \mathrm{Hom}_\mathcal{T}(\mathcal{H}^{-i}_M(C^\vee\otimes F),{\i})=0$$  for $i>-\mathrm{inf}C^\vee=\mathrm{sup}C^*$, it implies that
 $\mathcal{F}=\mathcal{F}'$ by \cite[Remark 3.9(2)]{Y1}.}
\end{rem}

\bigskip
\section {\bf $M$-ghost dimension of $\mathcal{T}$ and (weak) global dimension of $\mathcal{H}_M$}
In this section, we introduce the $M$-ghost length and $M$-ghost dimension, with the central aim of establishing the relationship between the $M$-ghost dimension of $\mathcal{T}$, the (weak) global dimension of $\mathcal{H}_M$, and the generation of $\mathcal{T}^c$ by
$\mathrm{add}(M)$, respectively.

\begin{df}\label{lem:3.0'}{\rm (1) An object $X$ in $\mathcal{T}$ is said to \emph{have
$M$-ghost length $n$}, $\mathrm{gh.len}_MX=n$, if every composite
$X\xrightarrow{f_1} Y_1\xrightarrow{f_2}\cdots\xrightarrow{f_{n+1}}Y_{n+1}$
of $n+1$ $M$-ghosts is $0$, and there exists a composite of $n$ $M$-ghosts starting from $X$ that is nonzero.

 (2) $\mathrm{gh.dim}\mathcal{T}=\mathrm{sup}\{\mathrm{gh.len}_MX\hspace{0.02cm}|\hspace{0.02cm}X\in\mathcal{T}^c\}$, called the \emph{$M$-ghost dimension} of $\mathcal{T}$.}
\end{df}

\begin{lem}\label{lem5.2'} If $M$ is tilting and $X\in\mathcal{T}$ with each $\mathcal{H}^{\ell}_M(X)\in\mathcal{P}$, then $X\cong\coprod_{\ell\in\mathbb{Z}}\mathcal{H}^{\ell}_M(X)[-\ell]$.
\end{lem}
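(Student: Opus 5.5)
The plan is to build a morphism $\varphi:\coprod_{\ell\in\mathbb{Z}}\mathcal{H}^{\ell}_M(X)[-\ell]\to X$ that induces an isomorphism on every $\mathcal{H}^m_M$. Then we show that an object with vanishing $M$-cohomology in all degrees is zero.

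\emph{Step 1: construct $\varphi$.} Write $P_\ell=\mathcal{H}^\ell_M(X)$; by hypothesis $P_\ell\in\mathcal{P}$. By Lemma \ref{lem0.2},
$$\mathrm{Hom}_\mathcal{T}(P_\ell[-\ell],X)\cong\mathrm{Hom}_{\mathcal{H}_M}(\mathcal{H}^0_M(P_\ell),\mathcal{H}^\ell_M(X)).$$
Since $M$ is tilting, $P_\ell\in\mathcal{H}_M$ and so $\mathcal{H}^0_M(P_\ell)=P_\ell$. Take $\varphi_\ell:P_\ell[-\ell]\to X$ corresponding to $\mathrm{id}_{P_\ell}$. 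This isomorphism is induced by applying $\mathcal{H}^\ell_M$ (it comes from the truncation triangles, because $P_\ell[-\ell]\in\mathcal{T}^{\leq \ell}$ has $\mathcal{H}^\ell_M=P_\ell$). Hence $\mathcal{H}^\ell_M(\varphi_\ell)=\mathrm{id}_{P_\ell}$. Let $\varphi=(\varphi_\ell)_\ell$ be the induced map out of the coproduct.

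\emph{Step 2: compute the cohomology of the coproduct.} For $m\in\mathbb{Z}$, compactness of $M$ gives
$$\mathrm{Hom}_\mathcal{T}(M[-m],\coprod_\ell P_\ell[-\ell])\cong\bigoplus_\ell\mathrm{Hom}_\mathcal{T}(M,P_\ell[m-\ell]).$$
Since $M$ is tilting and $M$ is compact, $\mathrm{Hom}_\mathcal{T}(M,\mathcal{P}[j])=0$ for $j\neq0$. Only the summand $\ell=m$ survives, so $\mathcal{H}^m_M(\coprod_\ell P_\ell[-\ell])\cong P_m$ via the canonical inclusion of the $m$-th summand. Composing with $\varphi$, we get that $\mathcal{H}^m_M(\varphi)$ is identified with $\mathcal{H}^m_M(\varphi_m)=\mathrm{id}_{P_m}$. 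So $\mathcal{H}^m_M(\varphi)$ is an isomorphism for every $m$.

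\emph{Step 3: conclude.} Complete $\varphi$ to a triangle $\coprod_\ell P_\ell[-\ell]\to X\to Z\to$. The long exact cohomology sequence in $\mathcal{H}_M$ ($\mathcal{H}^0_M$ is cohomological) together with Step 2 gives $\mathcal{H}^m_M(Z)=0$ for all $m$. By Lemma \ref{lem0.2}, $\mathrm{Hom}_\mathcal{T}(M[-m],Z)\cong\mathrm{Hom}_\mathcal{T}(M,\mathcal{H}^m_M(Z))=0$ for all $m$. Since $M$ is a compact silting generator (so $M^{\perp_{\mathbb{Z}}}=0$), $Z=0$ and $\varphi$ is an isomorphism.

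The main point requiring care is Step 1/2: checking that the isomorphism of Lemma \ref{lem0.2} is really given by applying $\mathcal{H}^\ell_M$, so that $\varphi_\ell$ induces the identity. Equally, the identification of $\mathcal{H}^m_M$ of the coproduct must be compatible with $\varphi$. If needed, I would verify the first directly from the triangle $\tau^{\leq\ell-1}X\to\tau^{\leq\ell}X\to P_\ell[-\ell]$, using that $\mathrm{Hom}(P_\ell[-\ell],-)$ kills $\mathcal{T}^{\geq\ell+1}$ and $(\tau^{\leq\ell-1}X)[1]$.
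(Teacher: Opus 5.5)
Your proposal is correct and follows the paper's proof. You build $\coprod_\ell\mathcal{H}^\ell_M(X)[-\ell]\to X$ from the identity maps via Lemma \ref{lem0.2}, check that it induces the identity on every $\mathcal{H}^\ell_M$, and conclude because the cone has vanishing $M$-cohomology. You also spell out two points the paper leaves implicit or cites: the identification $\mathcal{H}^m_M(\coprod_\ell P_\ell[-\ell])\cong P_m$, which is exactly where compactness and the tilting hypothesis enter, and the vanishing of the cone via $M^{\perp_{\mathbb{Z}}}=0$ in place of \cite[Lemma 2.8]{Y1}.
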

\begin{proof} As $\mathcal{H}^{\ell}_M(X)\in\mathcal{P}$ for $\ell\in\mathbb{Z}$, it follows from Lemma \ref{lem0.2} that $$\mathrm{Hom}_\mathcal{T}(\coprod_{\ell\in\mathbb{Z}}\mathcal{H}^{\ell}_M(X)[-\ell],X)
\cong\prod_{\ell\in\mathbb{Z}}\mathrm{Hom}_\mathcal{T}(\mathcal{H}^{\ell}_M(X),\mathcal{H}^{\ell}_M(X)).$$
So there is $u:\coprod_{\ell\in\mathbb{Z}}\mathcal{H}^{\ell}_M(X)[-\ell]\rightarrow X$ such that $\mathcal{H}^{\ell}_M(u)=\mathrm{id}_{\mathcal{H}^{\ell}_M(X)}$ for $\ell\in\mathbb{Z}$. Extending $u$ to an exact triangle $Y[-1]\rightarrow\coprod_{\ell\in\mathbb{Z}}\mathcal{H}^{\ell}_M(X)[-\ell]\stackrel{u}\rightarrow X\rightarrow Y$. Since  $\mathcal{H}^{\ell}_M(Y)=0$ for all $\ell\in\mathbb{Z}$, it follows from \cite[Lemma 2.8]{Y1} that $Y=0$, and then $X\cong\coprod_{\ell\in\mathbb{Z}}\mathcal{H}^{\ell}_M(X)[-\ell]$.
\end{proof}

Let $M$ be tilting. Denote $\mathrm{gldim}\mathcal{H}_M=
\mathrm{sup}\{\mathrm{pd}_{\mathcal{H}_M}H\hspace{0.02cm}|\hspace{0.02cm}H\in\mathcal{H}_M\}$, where
 $$\mathrm{pd}_{\mathcal{H}_M}H=\mathrm{inf}\{n\geq0\hspace{0.02cm}|\hspace{0.02cm}\exists\ \textrm{an\ exact\ sequence}\ 0\rightarrow P_n\rightarrow\cdots\rightarrow P_0\rightarrow H\rightarrow0\ \textrm{with\ each}\ P_i\in\mathcal{P}\}.$$
 The  next theorem proves that $\mathrm{gldim}\mathcal{H}_M$ is exactly the bound controlling
$M$-ghost length, it is a triangulated analogue of \cite[Theorem 8.14]{EFHO} and \cite[Theorem 1.5]{HL}.

\begin{thm}\label{lem5.2} Let $n\geq0$. If $M$ is tilting, then $\mathrm{gldim}\mathcal{H}_M\leq n$ if and only if $\mathrm{gh.len}_MX\leq n$ for all $X\in\mathcal{T}$. In particular, $\mathrm{gldim}\mathcal{H}_M=
\mathrm{sup}\{\mathrm{gh.len}_MX\hspace{0.02cm}|\hspace{0.02cm}X\in\mathcal{T}\}$.
\end{thm}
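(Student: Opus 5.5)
For the direction ``$\mathrm{gldim}\mathcal{H}_M\leq n$ implies $\mathrm{gh.len}_MX\leq n$'', I would use the $M$-ghost projective resolution of Lemma \ref{lem1.00}(1) together with a standard ``ghost lemma'' argument. Iterating Lemma \ref{lem1.00}(1) produces $M$-ghost triangles
\[
X_{k+1}\rightarrow P_k\rightarrow X_k\xrightarrow{\ \phi_k\ } X_{k+1}[1], \qquad X_0=X,\ P_k\in\langle\mathcal{P}\rangle_1 .
\]
By Lemma \ref{lem2.0}, each $\mathcal{H}^\ell_M$ of these triangles is a short exact sequence in $\mathcal{H}_M$. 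Splicing these gives, for each $\ell$, a projective resolution of $\mathcal{H}^\ell_M(X)$ in $\mathcal{H}_M$: note that $\mathcal{H}^\ell_M(P_k)\in\mathcal{P}$ because $M$ is tilting and $P_k$ is a coproduct of shifts of objects of $\mathcal{P}$.

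Since $\mathrm{gldim}\mathcal{H}_M\leq n$, the syzygy $\mathcal{H}^\ell_M(X_n)$ is projective for every $\ell$. By Lemma \ref{lem5.2'}, $X_n\cong\coprod_\ell\mathcal{H}^\ell_M(X_n)[-\ell]\in\langle\mathcal{P}\rangle_1$. Hence $X\in\langle\mathcal{P}\rangle_{n+1}$, by building back through the triangles. Finally, I would show by induction on $m$ that any composite of $m$ $M$-ghosts out of an object of $\langle\mathcal{P}\rangle_m$ vanishes. The base case holds because an $M$-ghost out of $\coprod M_i[-i]$ is zero, by compactness of $M$ and the vanishing of $\mathrm{Hom}(M[j],-)$ on the ghost. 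The inductive step is the usual triangle argument: given $A\rightarrow B\rightarrow C$ with $A\in\langle\mathcal{P}\rangle_1$ and $C\in\langle\mathcal{P}\rangle_{m-1}$, the first ghost restricted to $A$ vanishes, so it factors through $C$, and the remaining $m-1$ ghosts kill $C$. Closure under summands is immediate.

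For the converse, assume $\mathrm{gh.len}_MX\leq n$ for all $X$, and take $H\in\mathcal{H}_M$ viewed as an object of $\mathcal{T}$. Building the same resolution, the composite $\phi=\phi_n[n]\circ\cdots\circ\phi_0[0]$ (shifted suitably) is a composite of $n+1$ $M$-ghosts $H\rightarrow X_{n+1}[n+1]$ and therefore vanishes. For $X=H$ I would choose the resolution concentrated in degree $0$, using $P_k\in\mathcal{P}$ from sppj morphisms, so that the $X_k$ are syzygies $\Omega^kH$ in $\mathcal{H}_M$. The ghost $\phi_k:\Omega^kH\rightarrow\Omega^{k+1}H[1]$ then represents the extension class of $0\rightarrow\Omega^{k+1}H\rightarrow P_k\rightarrow\Omega^kH\rightarrow0$, and the composite represents the Yoneda product in $\mathrm{Ext}^{n+1}_{\mathcal{H}_M}(H,\Omega^{n+1}H)\cong\mathrm{Hom}_\mathcal{T}(H,\Omega^{n+1}H[n+1])$. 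This identification of $\mathrm{Hom}_\mathcal{T}$ with $\mathrm{Ext}$ in the heart is available for tilting hearts via Lemma \ref{lem0.2}, since objects of $\mathcal{P}$ are projective for the heart and resolve it.

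The vanishing of this composite, i.e. of the Yoneda class, forces the sequence $0\rightarrow\Omega^{n+1}H\rightarrow P_n\rightarrow\Omega^nH\rightarrow0$ to split. To see this, use dimension shifting: the class vanishes if and only if $\Omega^nH\rightarrow\Omega^{n+1}H[1]$ factors through the earlier syzygies in a way that implies splitting. It is cleaner to argue in $\mathcal{T}$ directly: $\phi=0$ means the last ghost $\phi_n$ composed with $\psi=\phi_{n-1}\cdots\phi_0$ is zero. I would then exploit that $\mathrm{Hom}(-,\Omega^{n+1}H[n+1])$ applied to the triangles is exact enough, using $\mathrm{Ext}^{>0}(P,-)=0$, to show that $\psi^*:\mathrm{Hom}(\Omega^nH[n],Y)\rightarrow\mathrm{Hom}(H,Y)$ is injective for $Y=\Omega^{n+1}H[n+1]$. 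Hence $\phi_n=0$, so $\Omega^nH$ is a summand of $P_n$ and $\mathrm{pd}H\leq n$.

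This injectivity, i.e. the dimension-shifting isomorphism $\mathrm{Ext}^1(\Omega^nH,-)\cong\mathrm{Ext}^{n+1}(H,-)$, is the main obstacle. It needs the $t$-structure and the tilting hypothesis to ensure that $\mathrm{Hom}_\mathcal{T}(P_k[j],Y)=0$ in the relevant degrees. The equality $\mathrm{gldim}\mathcal{H}_M=\sup\mathrm{gh.len}_MX$ then follows by applying both directions for every $n$.
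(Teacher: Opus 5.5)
Your proposal is correct and follows the paper's proof. The \emph{only if} direction iterates the $M$-ghost triangles of Lemma \ref{lem1.00}(1), applies Lemma \ref{lem5.2'} at the $n$-th stage to get $X\in\langle\mathcal{P}\rangle_{n+1}$, and finishes with the ghost lemma, which the paper cites as \cite[Lemma 4.5]{Y1} and you prove. The \emph{if} direction kills the $(n+1)$-fold composite of connecting ghosts of a projective resolution of $H$ in $\mathcal{H}_M$.

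The only difference is the last step. The paper builds $T_{n+1}$ by octahedra, makes $H$ a summand of an object of $\mathcal{P}\ast\mathcal{P}[1]\ast\cdots\ast\mathcal{P}[n]$, and obtains $\mathrm{Hom}_\mathcal{T}(H_n,\mathcal{H}_M[1])\cong\mathrm{Hom}_\mathcal{T}(H,\mathcal{H}_M[n+1])=0$ via \cite[Lemma 4.7]{Y1}. You instead dimension-shift directly on the single class $\phi_n$. The step you call the main obstacle is in fact immediate: for $0\leq k\leq n-1$,
$\mathrm{Hom}_\mathcal{T}(P_k[k+1],\Omega^{n+1}H[n+1])\cong\mathrm{Hom}_\mathcal{T}(P_k,\Omega^{n+1}H[n-k])=0$, because $n-k\geq1$ and $\Omega^{n+1}H\in\mathcal{H}_M=M^{\perp_{\neq0}}$. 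This makes each $\phi_k[k]^*$ injective, hence $\psi^*$ injective.
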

\begin{proof} `Only if' part. Let $X\in\mathcal{T}$. Then $\mathrm{pd}_{\mathcal{H}_M}\mathcal{H}^{\ell}_M(X)\leq n$ for all $\ell\in\mathbb{Z}$. We proceed by induction on $n$. If $n=0$, then $\mathcal{H}^{\ell}_M(X)\in\mathcal{P}$ for $\ell\in\mathbb{Z}$, and so $X\cong\coprod_{\ell\in\mathbb{Z}}\mathcal{H}^{\ell}_M(X)[-\ell]\in\langle\mathcal{P}\rangle_1$ by Lemma \ref{lem5.2'}. Assume that $n\geq1$.  By Lemma \ref{lem1.00}, there is an $M$-ghost triangle $X'\rightarrow P\rightarrow X\rightarrow X'[1]$ such that $P\in\langle\mathcal{P}\rangle_{1}$. Then  $0\rightarrow\mathcal{H}^{\ell}_M(X')\rightarrow\mathcal{H}^{\ell}_M(P)\rightarrow \mathcal{H}^{\ell}_M(X)\rightarrow0$ is exact, it implies that $\mathrm{pd}_{\mathcal{H}_M}\mathcal{H}^{\ell}_M(X')\leq n-1$ for all $\ell\in\mathbb{Z}$ as $\mathcal{H}^{\ell}_M(P)\in\mathcal{P}$. By the induction, $X'\in\langle\mathcal{P}\rangle_{n}$, so $X\in\langle\mathcal{P}\rangle_{n+1}$, and hence $\mathrm{gh.len}_MX\leq n$ by \cite[Lemma 4.5]{Y1}.

`If' part. Let $H\in\mathcal{H}_M$ and set $H_0=H$. Fix a projective resolution
$$\cdots\rightarrow P_n\rightarrow P_{n-1}\rightarrow\cdots\rightarrow P_0\rightarrow H_0\rightarrow0$$ in $\mathcal{H}_M$ with the corresponding exact sequence $0\rightarrow H_{s+1}\rightarrow P_s\rightarrow H_s\rightarrow 0$ in $\mathcal{H}_M$. For $s\geq0$, one has a commutative diagram of exact triangles
\begin{center} $\xymatrix@C=30pt@R=20pt{
P_{s-1}[s-2] \ar@{=}[d]\ar[r]& H_{s-1}[s-2]\ar[d]\ar[r]^{\ f_{s}[s-2]} &H_{s}[s-1]\ar[d]\ar[r]&P_{s-1}[s-1]\ar@{=}[d] \\
P_{s-1}[s-2] \ar[r]& T_{s-1}\ar[d]\ar[r] &T_s\ar[d]\ar[r]&P_{s-1}[s-1] \\
     & H_0\ar[d]^{g_{s-1}}\ar@{=}[r]& H_0\ar[d]^{g_s}  \\
     & H_{s-1}[s-1]\ar[r]^{\ \ \ f_{s}[s-1]}& H_{s}[s]  }$
\end{center}where $T_1=P_0$, $g_1=f_1$ and $f_s[s-1]\in\mathcal{G}_M$ for $s\geq1$ by Lemma \ref{lem2.0}. As $\mathrm{gh.len}_MH\leq n$ and $g_{n+1}=f_{n+1}[n]\cdots f_2[1]f_1\in\mathcal{G}^{n+1}_M$, it follows that the triangle $$H_{n+1}[n]\longrightarrow T_{n+1}\longrightarrow H\stackrel{g_{n+1}}\longrightarrow H_{n+1}[n+1]$$ splits, and then $H\in\mathcal{P}\ast\mathcal{P}[1]\ast\cdots\ast\mathcal{P}[n]$, so $\mathrm{Hom}_\mathcal{T}(H_{n},\mathcal{H}_M[1])\cong
\mathrm{Hom}_\mathcal{T}(H,\mathcal{H}_M[n+1])=0$ by \cite[Lemma 4.7]{Y1}. Thus the triangle $H_{n+1}\rightarrow P_{n}\rightarrow H_n\rightarrow H_{n+1}[1]$ splits and $H_n\in\mathcal{P}$, it yields that $\mathrm{pd}_{\mathcal{H}_M}H\leq n$, as required.
\end{proof}

Let $R$ be a ring and $0\not\simeq X\in\mathrm{D}(R)$. The \emph{projective dimension} of $X$ is defined as
\begin{center}$\mathrm{pd}_RX=\mathrm{inf}\{n\in\mathbb{Z}\hspace{0.03cm}|\hspace{0.03cm}
\mathrm{H}^i(\mathrm{RHom}_R(X,N))=0\ \textrm{for\ any}\ R\textrm{-module}\ N\ \textrm{and}\ i>n\}$.\end{center}
By  \cite[Lemma 4.5]{Y1}, $\mathrm{gh.len}_RX$ is precisely the projective level, which was introduced by Avramov, Buchweitz,
Iyengar and Miller in \cite{ABIM}. Upper bounds for projective level are relatively easy to obtain by a projective resolution. We now give a lower bound.

\begin{cor}\label{lem6.6} {\rm (\cite[Theorem 2.1]{AGMSV})} Let $R$ be a ring.
For $0\not\simeq X\in\mathrm{D}^b(R)$, one has
$$\mathrm{pd}_RX+\mathrm{inf}X\leq\mathrm{gh.len}_RX.$$
\end{cor}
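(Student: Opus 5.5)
We aim to show that if $\mathrm{gh.len}_RX=n$ is finite, then $\mathrm{pd}_RX\leq n-\mathrm{inf}X$. If $\mathrm{gh.len}_RX=\infty$ there is nothing to prove. In $\mathrm{D}(R)$ the tensor unit $R$ is a compact tilting generator with heart $\mathrm{Mod}R$, so $\mathcal{P}$ is the class of projective modules and $\mathcal{H}^\ell_R=\mathrm{H}^\ell$. Shifting $X$ changes $\mathrm{pd}_RX$ by the shift in the opposite direction to $\mathrm{inf}X$, and leaves $\mathrm{gh.len}_RX$ unchanged. So we may normalize to $\mathrm{inf}X=0$ and must show $\mathrm{pd}_RX\leq n$, that is, $\mathrm{Ext}^{i}_R(X,N)=\mathrm{Hom}_{\mathrm{D}(R)}(X,N[i])=0$ for every module $N$ and every $i>n$.

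The key step is a ghost-lowering argument modelled on the `if' part of Theorem \ref{lem5.2}. Fix a module $N$ and a nonzero morphism $\phi:X\to N[i]$ with $i>n$; we want a contradiction. Choose an injective resolution $0\to N\to I_0\to I_1\to\cdots$ with cosyzygies $N=N_0,N_1,\dots$, giving short exact sequences $0\to N_{s}\to I_s\to N_{s+1}\to0$. By Lemma \ref{lem2.0}, the connecting maps $\delta_s:N_{s+1}[-1]\to N_s$, equivalently $N_{s+1}[j-1]\to N_s[j]$, are $R$-ghosts. Here the dual of Lemma \ref{lem1.00}(2) is used: the triangle $N_s\to I_s\to N_{s+1}\to N_s[1]$ is ghost because $\mathrm{H}^0$ gives a short exact sequence.

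Since $\mathrm{Hom}_{\mathrm{D}(R)}(X,I_s[j])\cong\mathrm{Hom}_R(\mathrm{H}^{-j}X,I_s)$ by Lemma \ref{lem0.2}, this group vanishes for $j>0$, because $\mathrm{inf}X=0$ forces $\mathrm{H}^{-j}X=0$ for $j>0$. Hence $\mathrm{Hom}(X,N_{s}[j])\to\mathrm{Hom}(X,N_{s-1}[j+1])$ is surjective for $j\geq1$. Iterating, $\phi$ factors as
\[X\xrightarrow{\psi}N_{i-1}[1]\to N_{i-2}[2]\to\cdots\to N_0[i],\]
a composite of $\psi$ with $i-1$ ghosts. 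Next, we need $\psi$ itself to be a ghost, or at least to be replaceable by one. Consider the composite $X\to N_{i-1}[1]$. On cohomology, $\mathrm{H}^{\ell}(\psi)$ maps $\mathrm{H}^\ell X\to\mathrm{H}^{\ell+1}(N_{i-1}[0])$, which is nonzero only for $\ell=-1$, where $\mathrm{H}^{-1}X=0$. So $\psi$ is a ghost. Thus $\phi$ is a composite of $i\geq n+1$ ghosts starting at $X$, and therefore $\phi=0$. This shows $\mathrm{Ext}^{i}_R(X,N)=0$ for $i>n$, so $\mathrm{pd}_RX\leq n$.

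The main obstacle is the surjectivity bookkeeping. We must check the degree ranges carefully so that each lift exists, using exactly $\mathrm{H}^{<0}X=0$. We must also confirm that the definition of $\mathrm{pd}_R$ via $\mathrm{RHom}$ vanishing matches the bound, which is where boundedness of $X$ enters (for finiteness of $\mathrm{inf}X$). If the lifting fails at the last step, I would instead apply a ghost triangle $X\to I\to Z$ from Lemma \ref{lem1.00}(2) directly, and induct on $n$ using $\mathrm{gh.len}_RZ$-type estimates.
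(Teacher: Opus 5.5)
Your argument is correct, and it takes a genuinely different route from the paper's.

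You prove the inequality directly. After normalizing to $\mathrm{inf}X=0$, you pull an arbitrary class $\phi\in\mathrm{Hom}_{\mathrm{D}(R)}(X,N[i])$, $i>n$, back along the connecting maps $N_s[j]\to N_{s-1}[j+1]$ of an injective resolution of $N$.
\begin{itemize}
\item These connecting maps are $R$-ghosts by Lemma \ref{lem2.0}. That lemma, not Lemma \ref{lem1.00}(2), is what you are actually using.
\item The lifts exist because the obstruction group $\mathrm{Hom}_{\mathrm{D}(R)}(X,I_{s-1}[j+1])\cong\mathrm{Hom}_R(\mathrm{H}^{-j-1}X,I_{s-1})$ vanishes.
\item The first factor $\psi:X\to N_{i-1}[1]$ is a ghost because its target has cohomology only in degree $-1$, while $\mathrm{H}^{-1}X=0$.
\end{itemize}
The degree bookkeeping works, including the case $i=1$ where $\psi=\phi$. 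So your fallback plan is not needed.

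The paper instead quotes Minamoto's description of $\mathrm{pd}_RX+\mathrm{sup}X$ as the least $n$ with $X\in\mathrm{Add}(R)[-\mathrm{sup}X]\ast\cdots\ast\mathrm{Add}(R)[-\mathrm{sup}X+n]$. It then appeals to Theorem \ref{lem5.2} together with $\mathrm{pd}_RX+\mathrm{inf}X\leq\mathrm{pd}_RX+\mathrm{sup}X$. As written, this does not give the stated bound:
\begin{itemize}
\item Such a filtration places $X$ in $\langle\mathcal{P}\rangle_{n+1}$, so it yields the \emph{upper} bound $\mathrm{gh.len}_RX\leq\mathrm{pd}_RX+\mathrm{sup}X$.
\item Theorem \ref{lem5.2} is a statement about all objects simultaneously, so it gives no lower bound for an individual $X$.
\item The intermediate inequality $\mathrm{pd}_RX+\mathrm{sup}X\leq\mathrm{gh.len}_RX$ that the paper's chain would need is false. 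For $X=R\oplus R[-5]$ one has $\mathrm{pd}_RX+\mathrm{sup}X=5$ but $\mathrm{gh.len}_RX=0$.
\end{itemize}

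Your converse-ghost-lemma argument is what actually establishes the stated lower bound. It is self-contained, using only Lemmas \ref{lem0.2} and \ref{lem2.0} and the definition of ghost length. It also never uses that $X$ is bounded above, so it holds verbatim for every $0\not\simeq X\in\mathrm{D}^+(R)$.
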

\begin{proof} By \cite[Remark 2.2 and Theorem 2.22]{Mi18}, $\mathrm{pd}_RX+\mathrm{sup}X=n$ if and only if $n$ is the smallest number which satisfies $X\in\mathrm{Add}(R)[-\mathrm{sup}X]\ast\cdots\ast\mathrm{Add}(R)[-\mathrm{sup}X+n]$. As $\mathrm{pd}_RX+\mathrm{inf}X\leq\mathrm{pd}_RX+\mathrm{sup}X$, the desired inequality follows from Theorem \ref{lem5.2}.
\end{proof}

\begin{cor}\label{lem5.6} {\rm (\cite[Theorem 1.5]{HL})} Let $R$ be a  ring and denote by $\mathrm{gldim} R$ the global dimension of $R$. Then
 the maximum $R$-ghost length of an object in $\mathrm{D}(R)$ is $\mathrm{gldim}R$.
\end{cor}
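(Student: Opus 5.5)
This corollary will be obtained by specializing Theorem \ref{lem5.2} to $\mathcal{T}=\mathrm{D}(R)$ with $M=R$. By Example \ref{lem:2.2}(1), $R$ is a compact tilting generator of $\mathrm{D}(R)$, so the hypotheses of Theorem \ref{lem5.2} are satisfied. That theorem then gives
\[
\mathrm{gldim}\mathcal{H}_R=\mathrm{sup}\{\mathrm{gh.len}_RX\hspace{0.02cm}|\hspace{0.02cm}X\in\mathrm{D}(R)\}.
\]
It remains to identify the left side with $\mathrm{gldim}R$.

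For this, recall that the $t$-structure induced by $R$ is the standard one. First, $\mathcal{T}^{\leq 0}=\mathrm{Add}(R)^{\perp_{>0}}$ consists of the complexes $X$ with $\mathrm{H}^i(X)\cong\mathrm{Hom}_{\mathrm{D}(R)}(R,X[i])=0$ for $i>0$, and dually for $\mathcal{T}^{\geq 1}$. Hence the heart $\mathcal{H}_R=R^{\perp_{\neq0}}$ consists of complexes with cohomology concentrated in degree $0$, and $\mathcal{H}^0_R(-)=\mathrm{H}^0(-)$ gives an equivalence $\mathcal{H}_R\simeq\mathrm{Mod}R$. Under this equivalence $\mathcal{P}=\mathrm{Add}(R)$ corresponds to the projective modules. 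Therefore $\mathrm{pd}_{\mathcal{H}_R}H$, as defined before Theorem \ref{lem5.2} via resolutions by objects of $\mathcal{P}$, is the classical projective dimension of the module $\mathrm{H}^0(H)$. Exactness in $\mathcal{H}_R$ is exactness of modules, so this identification is legitimate, and taking suprema gives $\mathrm{gldim}\mathcal{H}_R=\mathrm{gldim}R$.

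Combining the two steps, the supremum of $R$-ghost lengths of objects of $\mathrm{D}(R)$ equals $\mathrm{gldim}R$. The statement says ``maximum'', so one small point remains. If $\mathrm{gldim}R=n<\infty$, there is a module $N$ with $\mathrm{pd}_RN=n$, and I must check that $\mathrm{gh.len}_RN=n$, so that the supremum is attained. This follows from the ``if'' direction of Theorem \ref{lem5.2} applied with $n-1$: if every object had ghost length at most $n-1$, then $\mathrm{gldim}\mathcal{H}_R\leq n-1$, a contradiction. Hence some object attains $n$, and since the sup equals $n$, it is a maximum. When $\mathrm{gldim}R=\infty$ the equality of suprema is the content, with both sides infinite. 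The only step needing care is the identification of the heart with $\mathrm{Mod}R$ together with $\mathcal{P}$ with the projectives; it is standard and I do not expect any real obstacle.
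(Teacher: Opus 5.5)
Your proposal is correct and matches how the paper obtains this corollary: it is stated without a separate proof, as the specialization of Theorem \ref{lem5.2} to $\mathrm{D}(R)$ with $M=R$, using $\mathcal{H}_R\simeq\mathrm{Mod}R$ and the fact that $\mathrm{Add}(R)$ corresponds to the projective modules. Your check that the supremum is attained when $\mathrm{gldim}R<\infty$ is a valid step the paper leaves implicit. The case $n=0$ needs separate handling, since Theorem \ref{lem5.2} is only stated for $n\geq0$ and you apply it with $n-1$; but it is trivial, because any nonzero object has ghost length $0$.
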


\begin{cor}\label{lem0.3} The following are equivalent:

 $(1)$ $\mathcal{T}=\langle\mathcal{P}\rangle_1$;

 $(2)$ $\mathcal{T}=\langle\mathcal{I}\rangle_1$;

 $(3)$ Any $M$-ghost triangle $X\rightarrow Y\rightarrow Z\stackrel{f}\rightarrow X[1]$ splits;

 $(4)$ $M$ is tilting and $\mathrm{gldim}\mathcal{H}_M=0$.
\end{cor}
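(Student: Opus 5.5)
I will prove the cycle of implications $(1)\Rightarrow(3)\Rightarrow(4)\Rightarrow(1)$, and then attach $(2)$ through the dual argument together with the equivalence $(2)\Leftrightarrow(3)$.

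\textbf{Step $(1)\Rightarrow(3)$.} Let $X\rightarrow Y\rightarrow Z\stackrel{f}\rightarrow X[1]$ be an $M$-ghost triangle. By $(1)$, $Z$ is a summand of a coproduct $\coprod P_j[i_j]$ with $P_j\in\mathcal{P}$. By definition $f$ is an $M$-ghost, so $\mathrm{Hom}_\mathcal{T}(M[i],f)=0$ for all $i$. Since $M$ is compact, the same vanishing holds for every object of $\mathcal{P}[i]$ and then for every object of $\langle\mathcal{P}\rangle_1$. Hence $\mathrm{Hom}_\mathcal{T}(Z,f)=0$, so $f=f\circ\mathrm{id}_Z=0$ and the triangle splits.

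\textbf{Step $(3)\Rightarrow(1)$ and $(3)\Rightarrow(2)$.} For any $X$, Lemma \ref{lem1.00}(1) gives an $M$-ghost triangle $Y\rightarrow P\rightarrow X\rightarrow Y[1]$ with $P\in\langle\mathcal{P}\rangle_1$. By $(3)$ it splits, so $X$ is a summand of $P$ and $X\in\langle\mathcal{P}\rangle_1$. Dually, Lemma \ref{lem1.00}(2) gives $(2)$. The proof of $(2)\Rightarrow(3)$ is the dual of $(1)\Rightarrow(3)$. Here we use that $f$ is an $M$-ghost if and only if $\mathrm{Hom}_\mathcal{T}(f,M^\ast[i])=0$ for all $i$, a vanishing which passes to products and summands, so that $\mathrm{Hom}_\mathcal{T}(f,X[1])=0$ for $X[1]\in\langle\mathcal{I}\rangle_1$. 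This establishes $(1)\Leftrightarrow(2)\Leftrightarrow(3)$.

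\textbf{Step $(4)\Rightarrow(1)$.} This is the case $n=0$ of the `only if' part of the proof of Theorem \ref{lem5.2}. If $\mathrm{gldim}\mathcal{H}_M=0$, every $\mathcal{H}^\ell_M(X)$ lies in $\mathcal{P}$, and Lemma \ref{lem5.2'} (which needs $M$ tilting) gives $X\cong\coprod\mathcal{H}^\ell_M(X)[-\ell]\in\langle\mathcal{P}\rangle_1$.

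\textbf{Step $(3)\Rightarrow(4)$.} Once $M$ is known to be tilting, $(3)$ says every $M$-ghost is zero, so $\mathrm{gh.len}_MX\leq0$ for all $X$, and Theorem \ref{lem5.2} with $n=0$ yields $\mathrm{gldim}\mathcal{H}_M=0$. The main obstacle is therefore to prove that $M$ is tilting, i.e. $\mathrm{Hom}_\mathcal{T}(M,M[k])=0$ for $k\neq0$; for $k>0$ this already holds since $M$ is silting.

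For $k<0$, I would first try to exhibit a nonzero $M$-ghost. Take $X=M$ and its truncation triangle $\tau^{\leq -1}(M)\rightarrow M\rightarrow\tau^{\geq0}(M)\rightarrow\tau^{\leq-1}(M)[1]$. By $(1)$, $M\cong\coprod_\ell\mathcal{H}^\ell_M(M)[-\ell]$ up to summands, and a component $\mathcal{H}^\ell_M(M)[-\ell]$ with $\ell<0$ is nonzero exactly when $\mathrm{Hom}(M,M[\ell])\neq0$.

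The second idea is to compare with the splitting in the heart. For any $H\in\mathcal{H}_M$, Lemma \ref{lem1.00}(1) gives an $M$-ghost triangle with $P=\coprod M_i[-i]$ that splits, so $H$ is a summand of $P$. Then $\mathcal{H}^0_M(P)\rightarrow H$ is split epi in $\mathcal{H}_M$, and the summand property should force the components $M_i[-i]$ with $i\neq0$ to contribute trivially. Applying this to $H=\mathcal{H}^0_M(M)$, I would show that $M$ is a summand of $\mathcal{H}^0_M(M)$ together with shifts of objects of $\mathcal{P}$. Comparing $\mathrm{Hom}(M,M[k])$ for $k<0$ via Lemma \ref{lem0.2} then gives $\mathrm{Hom}(M,\mathcal{H}^k_M(M))=0$.

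This last comparison is the delicate point and may require an inductive argument on the nonzero negative cohomologies of $M$, using that $M$ is compact so that only finitely many components appear.
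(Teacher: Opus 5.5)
Your implications $(1)\Rightarrow(3)$, $(3)\Rightarrow(1)$, $(2)\Leftrightarrow(3)$ and $(4)\Rightarrow(1)$ are correct and match the paper. The gap is in $(3)\Rightarrow(4)$. You rightly identify that the whole difficulty is proving $M$ is tilting, but you never prove it.

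Your first idea uses ``$M\cong\coprod_\ell\mathcal{H}^\ell_M(M)[-\ell]$ by (1)'', which does not follow. The membership $M\in\langle\mathcal{P}\rangle_1$ is automatic. The decomposition of Lemma \ref{lem5.2'} presupposes that $M$ is tilting and that each $\mathcal{H}^\ell_M(M)\in\mathcal{P}$, so the argument is circular. Your second idea ends in an unproved ``comparison''. Its appeal to compactness, to get only finitely many nonzero $\mathcal{H}^\ell_M(M)$, is false in general: take $A$ in $\mathrm{D}(A)$ for a non-positive dg ring $A$ with unbounded cohomology.

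The missing observation is that the truncation triangle of $M$ is itself an $M$-ghost triangle, so (3) applies to it directly.
\begin{enumerate}
\item Since $M\in\mathcal{T}^{\leq0}$, there is an exact triangle $\tau^{\leq-1}(M)\rightarrow M\rightarrow\mathcal{H}^0_M(M)\stackrel{f}\rightarrow\tau^{\leq-1}(M)[1]$.
\item It induces short exact sequences on every $\mathcal{H}^\ell_M$: an isomorphism for $\ell\leq-1$, the identity of $\mathcal{H}^0_M(M)$ for $\ell=0$, and zeros for $\ell>0$. Hence it is $M$-ghost by Lemma \ref{lem2.0}.
\item By (3), $f=0$, so $M\cong\tau^{\leq-1}(M)\oplus\mathcal{H}^0_M(M)$ and $\tau^{\leq-1}(M)\in\mathcal{P}$.
\item But $\mathrm{Hom}_\mathcal{T}(P,Y)=0$ for $P\in\mathcal{P}$ and $Y\in\mathcal{T}^{\leq-1}=\mathcal{P}^{\perp_{\geq0}}$. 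This is the paper's use of Lemma \ref{lem0.2}: $\mathrm{Hom}_\mathcal{T}(\tau^{\leq-1}(M),M)\cong\mathrm{Hom}_\mathcal{T}(\tau^{\leq-1}(M),\mathcal{H}^0_M(M))=0$.
\item So the identity of $\tau^{\leq-1}(M)$ is zero and $\tau^{\leq-1}(M)=0$. Thus $M\cong\mathcal{H}^0_M(M)\in\mathcal{H}_M$, i.e.\ $\mathrm{Hom}_\mathcal{T}(M,M[k])=0$ for $k<0$, and $M$ is tilting.
\end{enumerate}

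After that, your appeal to Theorem \ref{lem5.2} with $n=0$ is valid. The paper instead argues directly: for $H\in\mathcal{H}_M$, the sppj triangle $K\rightarrow P\rightarrow H\stackrel{h}\rightarrow K[1]$ has $P\in\mathcal{H}_M$, so $h$ is an $M$-ghost, hence $h=0$ and $H\in\mathcal{P}$.

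Your second idea could also be completed. Apply Lemma \ref{lem1.00}(1) to $H\in\mathcal{H}_M$ with $M_i=0$ for $i\neq0$, so $P=M_0\in\mathcal{P}$; then (3) gives $\mathcal{H}^0_M(M)\in\mathcal{P}$. Lemma \ref{lem0.2} then forces $f=0$, since $\mathcal{H}^1_M(\tau^{\leq-1}(M))=0$.
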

\begin{proof} (1) $\Rightarrow$ (3) As $Z\in\langle\mathcal{P}\rangle_1$ and $f\in\mathcal{G}_M$, one has $f=0$ and the triangle splits.

(3) $\Rightarrow$ (1) For any $X\in\mathcal{T}$, there is an $M$-ghost triangle $Y[-1]\rightarrow P\rightarrow X\stackrel{f}\rightarrow Y$ with $f\in\mathcal{G}_M$ and $P\in\langle\mathcal{P}\rangle_1$ by Lemma \ref{lem1.00}(1). Then $X$ is a retract of $P$, it implies that $X\in\langle\mathcal{P}\rangle_1$.

(2) $\Leftrightarrow$ (3) The proof is dual to that of (1) $\Leftrightarrow$ (3), by applying Lemma \ref{lem1.00}(2).

(3) $\Rightarrow$ (4) As the triangle $\tau^{\leq-1}(M)\rightarrow M\rightarrow \mathcal{H}^0_M(M)\stackrel{f}\rightarrow\tau^{\leq-1}(M)[1]$ is $M$-ghost, $f=0$, so $\tau^{\leq-1}(M)\in\mathcal{P}$. Then $\mathrm{Hom}_\mathcal{T}(\tau^{\leq-1}(M),M)\cong
\mathrm{Hom}_\mathcal{T}(\tau^{\leq-1}(M),\mathcal{H}^0_M(M))=0$ by Lemma \ref{lem0.2}. Thus $\tau^{\leq-1}(M)=0$ and $M$ is tilting. For $H\in\mathcal{H}_M$, let $K\rightarrow P\stackrel{g}\rightarrow H\stackrel{h}\rightarrow K[1]$ be a triangle with $g$ a sppj morphism. As $P\in\mathcal{H}_M$, $h\in\mathcal{G}_M$ and $H\in\mathcal{P}$. Hence $\mathrm{gldim}\mathcal{H}_M=0$.

(4) $\Rightarrow$ (1) This follows from Theorem \ref{lem5.2} and \cite[Lemma 4.5]{Y1}.
\end{proof}

We now move from the global dimension of the heart to the weak global dimension. Let $M$ be tilting. Denote $\mathrm{wgldim}\mathcal{H}_M=
\mathrm{sup}\{\mathrm{fd}_{\mathcal{H}_M}H\hspace{0.02cm}|\hspace{0.02cm}H\in\mathcal{H}_M\}$, where
 $$\mathrm{fd}_{\mathcal{H}_M}H=\mathrm{inf}\{n\geq0\hspace{0.02cm}|\hspace{0.02cm}\exists\ \textrm{an\ exact\ sequence}\ 0\rightarrow F_n\rightarrow\cdots\rightarrow F_0\rightarrow H\rightarrow0\ \textrm{with\ each}\ F_i\in\mathcal{F}\}.$$

 \begin{lem}\label{lem2.11} If $M$ is tilting and $\mathrm{add}(M)=\mathrm{add}({\i})$, then
$\mathrm{flatdim}_{\mathcal{T}}H=\mathrm{fd}_{\mathcal{H}_M}H$ for $H\in\mathcal{H}_M$.
\end{lem}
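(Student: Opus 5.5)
We prove $\mathrm{flatdim}_{\mathcal{T}}H=\mathrm{fd}_{\mathcal{H}_M}H$ by showing that the two sides are controlled by the same resolutions. Since $H\in\mathcal{H}_M$, we have $\mathrm{sup}H=0$ whenever $H\neq0$; the case $H=0$ is trivial. So Theorem \ref{lem2.21} applies with $\mathrm{sup}X=0$, and $\mathrm{flatdim}_{\mathcal{T}}H=d$ holds exactly when $d$ is the smallest integer with $H\in\mathcal{F}\ast\mathcal{F}[1]\ast\cdots\ast\mathcal{F}[d]$. It therefore suffices to show that $H\in\mathcal{F}\ast\cdots\ast\mathcal{F}[d]$ if and only if $H$ admits an exact sequence $0\to F_d\to\cdots\to F_0\to H\to0$ in $\mathcal{H}_M$ with $F_i\in\mathcal{F}$. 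Note that $\mathcal{F}\subseteq\mathcal{H}_M$ once $M$ is tilting: a pure quotient $F$ of $P\in\mathcal{P}\subseteq\mathcal{H}_M$ has $\mathrm{sup}F\leq0$, and $\mathbb{I}_{{\i}}(F)\in\mathcal{I}$ (Lemma \ref{lem1.03}) combined with Lemma \ref{lem1.4} forces $\mathrm{inf}F\geq0$.

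\emph{From a flat resolution to triangles.} Suppose $0\to F_d\to\cdots\to F_0\to H\to0$ is exact in $\mathcal{H}_M$. Split it into short exact sequences $0\to K_{s+1}\to F_s\to K_s\to0$ with $K_0=H$ and $K_d=F_d$. Each short exact sequence in the heart gives an exact triangle $K_{s+1}\to F_s\to K_s\to K_{s+1}[1]$. Shifting and splicing these triangles, exactly as in the diagram in the proof of Theorem \ref{lem5.2}, places $H$ in $\mathcal{F}\ast\mathcal{F}[1]\ast\cdots\ast\mathcal{F}[d]$. Hence $\mathrm{flatdim}_{\mathcal{T}}H\leq\mathrm{fd}_{\mathcal{H}_M}H$.

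\emph{From the flat dimension to a flat resolution.} Let $d=\mathrm{flatdim}_{\mathcal{T}}H$, and choose a spft resolution of $H$ in which every $F_s$ is taken in $\mathcal{P}\subseteq\mathcal{F}$, each $f_s$ being surjective on $\mathcal{H}^0_M$. By induction on $s$, each $X_s$ lies in $\mathcal{H}_M$ and each triangle $(\dag_s)$ is a short exact sequence $0\to X_{s+1}\to F_s\to X_s\to0$ in the heart. For the inductive step, if $X_s\in\mathcal{H}_M$ and $F_s\to X_s$ is an epimorphism in the heart, the cone triangle has $X_{s+1}$ equal to the kernel, which lies in $\mathcal{H}_M$. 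In particular $\mathrm{sup}X_s=0$ throughout. Lemma \ref{lem3.4} then gives $\mathrm{flatdim}_{\mathcal{T}}X_s=d-s$ for $s\leq d$. At $s=d$ we get $\mathrm{flatdim}_{\mathcal{T}}X_d=0=-\mathrm{sup}X_d$, so Lemma \ref{lem2.20'} yields $X_d\in\mathcal{F}$. This produces an $\mathcal{F}$-resolution of length $d$ in $\mathcal{H}_M$, so $\mathrm{fd}_{\mathcal{H}_M}H\leq d$.

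The step I expect to require the most care is the bookkeeping in the last paragraph. One must check that the cocone of a heart epimorphism between heart objects stays in the heart, which uses that $\mathcal{H}^{-1}_M$ of the cone vanishes. One must also handle the degenerate case where $X_s=0$ before stage $d$, which would contradict $\mathrm{flatdim}_{\mathcal{T}}X_s=d-s>0$ and so cannot occur. Finally, the hypothesis $\mathrm{flatdim}_{\mathcal{T}}X_s+\mathrm{sup}X_s\geq1$ of Lemma \ref{lem3.4} holds at every stage $s<d$. Combining the two inequalities gives the equality.
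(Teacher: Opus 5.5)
Your proposal is correct and follows essentially the same route as the paper. For $\mathrm{fd}_{\mathcal{H}_M}H\leq\mathrm{flatdim}_{\mathcal{T}}H$, the paper also takes a projective (sppj) resolution of $H$ inside $\mathcal{H}_M$, dimension-shifts to the $d$-th syzygy, and shows that syzygy lies in $\mathcal{F}$ by the duality/phantom argument, which you invoke in packaged form through Lemmas \ref{lem3.4} and \ref{lem2.20'}. For the reverse inequality, both arguments turn the $\mathcal{F}$-resolution into triangles and apply Theorem \ref{lem2.21}; you use form (4), the paper the resolution form. Your explicit checks that $\mathcal{F}\subseteq\mathcal{H}_M$ and that the syzygies stay in the heart supply details the paper only asserts.
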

\begin{proof} Let $\mathrm{flatdim}_{\mathcal{T}}H=t$.
Consider the spft resolution
$$\cdots\rightarrow P_n\rightarrow P_{n-1}\rightarrow\cdots\rightarrow P_1\rightarrow P_0\rightarrow H$$ with the corresponding exact triangle $H_{j+1}\rightarrow P_j\rightarrow H_j\stackrel{h_{j+1}}\rightarrow H_{j+1}[1]$ for $j\geq0$, where $H_0=H$. Then $\mathcal{H}^{-i}_M(\mathcal{H}_M\otimes H_{t})\cong\mathcal{H}^{-t-i}_M(\mathcal{H}_M\otimes H)=0$ for $i>0$ by Lemma \ref{lem2.20}, and so $\mathrm{Hom}_\mathcal{T}(\mathbb{I}_{{\i}}(H_{t+1})[-1], \mathbb{I}_{{\i}}(H_{t}))\cong
\mathrm{Hom}_\mathcal{T}({\i}[1],\mathbb{I}_{{\i}}(H_{t+1})\otimes H_{t})^+=0$, it implies that $h_{t+1}$ is phantom by Lemma \ref{lem1.22} and
$H_t\in\mathcal{F}$. Note that $h_{i+1}\in\mathcal{G}_M$, so $H_i\in\mathcal{H}_M$ for $i\geq0$.
Hence  $$0\rightarrow H_t\rightarrow P_{t-1}\rightarrow\cdots\rightarrow P_1\rightarrow P_0\rightarrow H\rightarrow0$$ is exact in $\mathcal{H}_M$ by Lemma \ref{lem2.0}, it means that $\mathrm{fd}_{\mathcal{H}_M}H\leq t$. Conversely, let $0\rightarrow Q_r\rightarrow Q_{r-1}\rightarrow\cdots\rightarrow Q_0\rightarrow H\rightarrow0$ be an exact sequence in $\mathcal{H}_M$ with each $Q_{i}\in\mathcal{F}$. Set $K_i=\mathrm{coker}(Q_i\rightarrow Q_{i-1})$. Then $K_{i+1}\rightarrow Q_i\stackrel{f_{i}}\rightarrow K_i\rightarrow K_{i+1}[1]$ is an exact triangle such that $f_i$ is a spft morphism for $0\leq i\leq r-1$ with $K_0=H$ and $K_r=Q_r$, so $\mathrm{flatdim}_{\mathcal{T}}H\leq r$ by Theorem \ref{lem2.21}.
\end{proof}

 The following is the main theorem of this section.

\begin{thm}\label{lem5.5} Let $M$ be tilting and $\mathrm{add}(M)=\mathrm{add}({\i})$. One has equalities $$\mathrm{sup}\{\mathrm{flatdim}_{\mathcal{T}}X+\mathrm{inf}X\hspace{0.03cm}|\hspace{0.03cm}X\in\mathcal{T}^b\}
=\mathrm{wgldim}\mathcal{H}_M=\mathrm{gh.dim}\mathcal{T}
=\mathrm{add}(M)\textrm{-}\mathrm{tri.dim}\mathcal{T}^c.$$
\end{thm}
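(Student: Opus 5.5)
I would prove the four equalities as a cycle of inequalities. Throughout, write $w=\mathrm{wgldim}\mathcal{H}_M$, $g=\mathrm{gh.dim}\mathcal{T}$, $t=\mathrm{add}(M)\textrm{-tri.dim}\mathcal{T}^c$, and let $\phi$ denote the supremum on the left.

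\textbf{Step 1: $\phi=w$.} For $H\in\mathcal{H}_M$ we have $\mathrm{inf}H=0$, and Lemma \ref{lem2.11} gives $\mathrm{flatdim}_{\mathcal{T}}H=\mathrm{fd}_{\mathcal{H}_M}H$. Hence $w\leq\phi$.

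For the reverse, let $X\in\mathcal{T}^b$ and suppose $\mathrm{fd}_{\mathcal{H}_M}\leq w$ on $\mathcal{H}_M$. By Theorem \ref{lem2.1}(3), $\mathrm{flatdim}_{\mathcal{T}}X=\sup\{n\mid\mathcal{H}^{-n}_M(H\otimes X)\neq0\}$. I would induct on $\mathrm{amp}X$ using the triangles $\tau^{\leq\ell-1}X\to\tau^{\leq\ell}X\to\mathcal{H}^\ell_M(X)[-\ell]$. Since $\mathrm{flatdim}(H'[-\ell])=\mathrm{flatdim}H'-\ell\leq w-\ell\leq w-\mathrm{inf}X$ for each $\ell\geq\mathrm{inf}X$, the long exact sequence of $\mathcal{H}^{-i}_M(H\otimes-)$ propagates vanishing for $i>w-\mathrm{inf}X$. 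This yields $\mathrm{flatdim}_{\mathcal{T}}X+\mathrm{inf}X\leq w$.

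\textbf{Step 2: $g\leq t$ and $t\leq w$.} The bound $g\leq t$ is standard: $\mathcal{G}_M$ kills $\mathrm{add}(M)$ and its shifts, so by the ghost lemma an object of $\mathrm{thick}^{n+1}(\mathrm{add}(M))$ has $M$-ghost length at most $n$.

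For $t\leq w$, let $C\in\mathcal{T}^c$. Since $M$ is compact and tilting, $C$ is bounded and each $\mathcal{H}^\ell_M(C)$ is finitely presented in $\mathcal{H}_M\simeq\mathrm{Mod}\,\mathrm{End}(M)$; it is finitely presented because $\mathrm{Hom}(M,-[\ell])$ of a compact object lands in finitely presented modules. I would run a spft resolution of $C$ using finite sums $P_j\in\mathrm{add}(M)$, chosen as sppj morphisms, with syzygies $C_j$. By Step 1, $\mathrm{flatdim}C+\mathrm{sup}C\leq w+\mathrm{amp}C$. Better, I would work with ghost (not sppj) triangles built from finite free presentations of every cohomology simultaneously, in the manner of Lemma \ref{lem1.00} with finitely many degrees, so that the syzygies have cohomology with flat dimension decreased by one. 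After $w$ steps every cohomology of $C_w$ is finitely presented and flat, hence projective in $\mathrm{add}$; then $C_w\in\mathrm{add}$ of shifts of $\mathrm{add}(M)$ by Lemma \ref{lem5.2'}. Consequently $C\in\mathrm{thick}^{w+1}(\mathrm{add}(M))$.

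\textbf{Step 3: $w\leq g$, the main obstacle.} I would imitate the `if' part of Theorem \ref{lem5.2}, but only for finitely presented $H$, since $w=\sup$ of $\mathrm{pd}$ over finitely presented objects, as for rings. Take such $H$ with a finite projective presentation $P_1\to P_0\to H$ in $\mathrm{add}(M)$, and let $C$ be the cone of $P_1\to P_0$. Then $C$ is compact and $\mathcal{H}^0_M(C)=H$. I would build the composite of ghosts $g_{n+1}:C\to H_{n+1}[n+1]$ with $n=g$; it vanishes, so its truncation to $H$ splits in degree $0$. This gives $\mathrm{Ext}^{n+1}_{\mathcal{H}_M}(H,-)=0$, i.e. $\mathrm{pd}H\leq g$, hence $\mathrm{fd}H\leq g$.

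The delicate part is that $C$ has an extra cohomology $\mathcal{H}^{-1}_M(C)=\ker(P_1\to P_0)$. One must check that the ghost composite out of $C$ restricts correctly to $H[0]$, which I would do via Lemma \ref{lem0.2}. If that fails, the fallback is to test $g$-vanishing on $\mathrm{Tor}$ via the duality $\mathbb{I}_{{\i}}$ and Lemma \ref{lem1.2}, showing $\mathcal{H}^{-n-1}_M(H'\otimes C)=0$ directly.

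Chaining the inequalities gives $\phi=w$ and $g\leq t\leq w\leq g$, which proves all four equalities.
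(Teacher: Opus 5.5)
Your Step 1 is essentially the paper's induction on $\mathrm{amp}X$, and $g\le t$ is just the ghost lemma. You run the cycle in the opposite direction from the paper, which proves $t\le g\le w\le t$. The real problem is Step 3, which does not work.

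\textbf{Step 3.} The only input it uses is that the single compact object $C=\mathrm{cone}(P_1\to P_0)$ has ghost length $\le n$. The composite you build is really $g_{n+1}\circ\pi$ with $\pi\colon C\to H$. Its vanishing says nothing about $g_{n+1}$ itself, because there is no map $H\to C$ to restrict along. This cannot be fixed locally. Take $R=k[x]/(x^2)$, $M=R$ and $C=\mathrm{cone}(R\xrightarrow{x}R)$. Then $C$ lies in $\mathrm{thick}^2(\mathrm{add}(R))$, so $\mathrm{gh.len}_RC\le 1$. Yet $\mathcal{H}^0(C)=k$ has infinite projective and flat dimension. Your Tor fallback fails in the same way, since $\mathcal{H}^{-i}(H'\otimes C)=0$ for all $i>1$. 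Separately, $w$ is the supremum of \emph{flat} dimensions of finitely presented objects; it agrees with the supremum of projective dimensions only when $\mathrm{End}(M)$ is coherent.

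The missing idea is the paper's: use the bound on \emph{all} compact objects and transport it to an arbitrary object. Every $X$ is a homology colimit of compacts $X_\lambda$. If $\Phi$ is a composite of $n+1$ $M$-ghosts out of $X$, then each $\Phi\circ(X_\lambda\to X)$ is such a composite out of a compact object, hence $0$, so $\Phi$ is phantom. Now take $X=H$ and the ghost tower $g_{n+1}\colon H\to H_{n+1}[n+1]$ of a projective resolution, as in Theorem \ref{lem5.2}. The triangle $H_{n+1}[n]\to U_{n+1}\to H\to H_{n+1}[n+1]$ is then pure. By Lemma \ref{lem1.22}, $\mathbb{I}_{{\i}}(H)$ is a direct summand of $\mathbb{I}_{{\i}}(U_{n+1})$, with $U_{n+1}\in\mathcal{P}\ast\mathcal{P}[1]\ast\cdots\ast\mathcal{P}[n]$. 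Theorem \ref{lem2.1} and Lemma \ref{lem2.11} then give $\mathrm{fd}_{\mathcal{H}_M}H=\mathrm{flatdim}_{\mathcal{T}}H\le n$. Obtaining a phantom map rather than a zero map is exactly why one lands on flat rather than projective dimension. The paper starts this argument from $t\le n$, but it only uses $g\le n$, so it yields your $w\le g$ verbatim.

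\textbf{Step 2b.} This step has a separate problem: cohomology of compact objects need not be finitely presented. For example, $\mathcal{H}^{-1}_M(\mathrm{cone}(M\xrightarrow{a}M))$ is an annihilator of $a$ in $\mathrm{End}(M)$. That annihilator need not be finitely generated unless $\mathrm{End}(M)$ is coherent. Without this you cannot take the $P_j$ in $\mathrm{add}(M)$, nor invoke ``finitely presented and flat implies projective'' at stage $w$.

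The paper keeps finiteness out of the flat direction by proving $g\le w$ through duality. It uses the (non-compact) ghost resolution of Lemma \ref{lem1.00}. The $w$-th syzygy $X_w$ then has all $\mathcal{H}^t_M(X_w)\in\mathcal{F}$. By Lemmas \ref{lem1.03} and \ref{lem1.4} and the dual of Lemma \ref{lem5.2'}, $\mathbb{I}_{{\i}}(X_w)\in\langle\mathcal{I}\rangle_1$, hence $\mathbb{I}_{{\i}}(X)\in\langle\mathcal{I}\rangle_{w+1}$. By Lemma \ref{lem1.2}, a composite of $w+1$ ghosts out of a compact $X$ dualizes to one into $\mathbb{I}_{{\i}}(X)$, which vanishes. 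So the composite is phantom, and therefore $0$ because $X$ is compact.

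The paper then reaches $t$ via $t\le g$, using a ghost tower chosen inside $\mathcal{T}^c$. That choice tacitly needs the same finite generation, so in both routes the link to $t$ is the delicate one. Your $g\le t$ is unproblematic. You still need a finiteness-free proof of $t\le g$ (or $t\le w$). One option is the fact that a compact object of $\langle\mathcal{P}\rangle_{n+1}$ already lies in $\mathrm{thick}^{n+1}(\mathrm{add}(M))$. Apply it to the compact $X$, which is a retract of $T_{n+1}$ in the non-compact ghost tower.
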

\begin{proof}  By Lemma \ref{lem2.11}, $\mathrm{sup}\{\mathrm{flatdim}_{\mathcal{T}}X+\mathrm{inf}X\hspace{0.03cm}|\hspace{0.03cm}X\in\mathcal{T}^b\}
\geq\mathrm{wgldim}\mathcal{H}_M=:d$. Conversely, let $X\in\mathcal{T}^{b}$. We show
$\mathrm{flatdim}_{\mathcal{T}}X\leq d-\mathrm{inf}X$  by the induction
on $\mathrm{amp}X$. The case of $\mathrm{amp}X=0$ is trivial as $\mathrm{flatdim}_{\mathcal{T}}H[n]=\mathrm{flatdim}_{\mathcal{T}}H+n$ for $H\in\mathcal{H}_M$ and $n\in\mathbb{Z}$. Assume that $\mathrm{amp}X>0$ and set $i=\mathrm{inf}X$. Consider the following  exact triangle $$\tau^{\geq i+1}(X)[-1]\rightarrow\mathcal{H}^{i}_M(X)[-i]\rightarrow X\rightarrow \tau^{\geq i+1}(X).$$ As $\mathrm{amp}\tau^{\geq i+1}(X)\leq\mathrm{amp}X-1$,  $\mathrm{flatdim}_{\mathcal{T}}\tau^{\geq i+1}(X)\leq d+\mathrm{inf}\tau^{\geq i+1}(X)$ by the induction. Note that  $\mathrm{flatdim}_{\mathcal{T}}X\leq\mathrm{max}\{\mathrm{flatdim}_{\mathcal{T}}\mathcal{H}^{i}_M(X)[-i],
\mathrm{flatdim}_{\mathcal{T}}\tau^{\geq i+1}(X)\}\leq d-i$ by Theorem \ref{lem2.1}, the first equality holds.

`$\mathrm{gh.dim}\mathcal{T}\leq\mathrm{wgldim}\mathcal{H}_M$'. Let $X\in\mathcal{T}^{c}$. By Lemma \ref{lem1.00}, one can choose an $M$-ghost triangle $X_{s+1}\rightarrow P_s\rightarrow X_s\rightarrow X_{s+1}[1]$ in $\mathcal{T}^c$ such that $P_s\in\mathrm{thick}^{1}(M)$ for $s\geq0$, where $X_0=X$. For $t\in\mathbb{Z}$, it follows from Lemma  \ref{lem2.0} that $$0\rightarrow\mathcal{H}^t_M(X_d)\rightarrow\mathcal{H}^t_M(P_{d-1})\rightarrow
\cdots\rightarrow\mathcal{H}^t_M(P_0)\rightarrow\mathcal{H}^t_M(X)\rightarrow0$$ is exact. As $\mathcal{H}^t_M(P_i)\in\mathcal{P}$ for $0\leq i\leq d-1$,  $\mathcal{H}^t_M(X_d)\in\mathcal{F}$, so $\mathcal{H}^{-t}_M(\mathbb{I}_{{\i}}(X_d))\cong\mathbb{I}_{{\i}}(\mathcal{H}^t_M(X_d))
\in\mathcal{I}$ for $t\in\mathbb{Z}$ by Lemma \ref{lem1.4}. Hence $\mathbb{I}_{{\i}}(X_d)\cong\prod_{t\in\mathbb{Z}}\mathcal{H}^{-t}_M(\mathbb{I}_{{\i}}(X_d))[t]
\in\langle\mathcal{I}\rangle_{1}$ by the dual of Lemma \ref{lem5.2'}, and then $\mathbb{I}_{{\i}}(X)\in\langle\mathcal{I}\rangle_{d+1}$. Let $X\xrightarrow{f_1} Y_1\xrightarrow{f_2}\cdots\xrightarrow{f_{n+1}} Y_{n+1}$ be a composite of $n+1$ $M$-ghosts. Then $\mathbb{I}_{{\i}}(f_{n+1}\cdots f_1)=\mathbb{I}_{{\i}}(f_{n+1})\cdots \mathbb{I}_{{\i}}(f_1)\in\mathcal{G}^{n+1}_M$ by Lemma \ref{lem1.2}, so $\mathbb{I}_{{\i}}(f_{n+1}\cdots f_1)=0$ by \cite[Lemma 4.5]{Y1}. Hence $f_{n+1}\cdots f_1$ is phantom. As $X$ is compact,  $f_{n+1}\cdots f_1=0$, and so $\mathrm{gh.len}_MX\leq d$, it implies that $\mathrm{gh.dim}\mathcal{T}\leq d$.

`$\mathrm{add}(M)\textrm{-}\mathrm{tri.dim}\mathcal{T}^c\leq\mathrm{gh.dim}\mathcal{T}$'. For $s\geq0$, the above exact triangle yields a commutative diagram:
\begin{center} $\xymatrix@C=30pt@R=20pt{
P_{s-1}[s-2] \ar@{=}[d]\ar[r]& X_{s-1}[s-2]\ar[d]\ar[r]^{\ f_{s}[s-2]} &X_{s}[s-1]\ar[d]\ar[r]&P_{s-1}[s-1]\ar@{=}[d] \\
P_{s-1}[s-2] \ar[r]& T_{s-1}\ar[d]\ar[r] &T_s\ar[d]\ar[r]&P_{s-1}[s-1] \\
     & X\ar[d]^{g_{s-1}}\ar@{=}[r]& X\ar[d]^{g_s}  \\
     & X_{s-1}[s-1]\ar[r]^{\ \ \ f_{s}[s-1]}& X_{s}[s]  }$
\end{center}where $T_1=P_0$ and $T_{d+1}\in\mathrm{thick}^{d+1}(M)$, $g_1=f_1$ and $f_s[s-1]\in\mathcal{G}_M$ for $s\geq1$. Since $\mathrm{gh.dim}_MX\leq d$ and $g_{d+1}=f_{d+1}[d]\cdots f_2[1]f_1\in\mathcal{G}^{d+1}_M$, it follows that $g_{d+1}$ is phantom. As $X$ is compact,
the exact triangle $$X_{n+1}[n]\longrightarrow T_{n+1}\longrightarrow X\stackrel{g_{n+1}}\longrightarrow X_{n+1}[n+1]$$ is split, so $X\in\mathrm{thick}^{d+1}(M)$. Thus $\mathrm{add}(M)\textrm{-}\mathrm{tri.dim}\mathcal{T}^c\leq d$.

`$\mathrm{wgldim}\mathcal{H}_M\leq\mathrm{add}(M)\textrm{-}\mathrm{tri.dim}\mathcal{T}^c$'. Let $\mathrm{add}(M)\textrm{-}\mathrm{tri.dim}\mathcal{T}^c=n$ and $X\in\mathcal{T}$. Then $X\simeq\mathrm{hcolim}_{\lambda\in\Lambda}X_\lambda$ with $X_\lambda\in\mathcal{T}^c$. Let $Y[-1]\rightarrow Z\rightarrow X\stackrel{\Phi}\rightarrow Y$  be an exact triangle  with $\Phi\in\mathcal{G}^{n+1}_M$, it induces the following commutative diagram of exact triangles\begin{center} $\xymatrix@C=20pt@R=20pt{
\triangle_\lambda:&Y[-1]\ar@{=}[d]\ar[r]& Z_\lambda\ar[d]\ar[r] &X_\lambda\ar[d]\ar[r]^{\Phi_\lambda}&Y\ar@{=}[d] \\
\triangle:&Y[-1]\ar[r]& Z\ar[r] &X\ar[r]^{\Phi\ }&Y}$
\end{center}Then $\Phi_\lambda\in\mathcal{G}^{n+1}_M$. As $X_\lambda\in\langle\mathcal{P}\rangle_{n+1}$, $\Phi_\lambda=0$ by \cite[Lemma 4.5]{Y1}. Hence $\Phi=\mathrm{hcolim}_{\lambda\in\Lambda}\Phi_\lambda$ is phantom by \cite[Lemma 2.8]{K1}. Fix a projective resolution
$$\cdots\rightarrow F_n\rightarrow F_{n-1}\rightarrow\cdots\rightarrow F_0\rightarrow H_0\rightarrow0$$ in $\mathcal{H}_M$ with the corresponding exact sequence $0\rightarrow H_{s+1}\rightarrow F_s\rightarrow H_s\rightarrow 0$. For $s\geq0$, one has a commutative diagram of exact triangles
\begin{center} $\xymatrix@C=30pt@R=20pt{
F_{s-1}[s-2] \ar@{=}[d]\ar[r]& H_{s-1}[s-2]\ar[d]\ar[r]^{\ h_{s}[s-2]} &H_{s}[s-1]\ar[d]\ar[r]&F_{s-1}[s-1]\ar@{=}[d] \\
F_{s-1}[s-2] \ar[r]& U_{s-1}\ar[d]\ar[r] &U_s\ar[d]\ar[r]&F_{s-1}[s-1] \\
     & H_0\ar[d]^{g_{s-1}}\ar@{=}[r]& H_0\ar[d]^{g_s}  \\
     & H_{s-1}[s-1]\ar[r]^{\ \ \ h_{s}[s-1]}& H_{s}[s]  }$
\end{center}where $U_1=F_0$, $g_1=f_1$ and $g_{n+1}=h_{n+1}[n]\cdots h_2[1]h_1\in\mathcal{G}^{n+1}_M$, it follows from the preceding proof that the triangle $$H_{n+1}[n]\longrightarrow U_{n+1}\longrightarrow H\stackrel{g_{n+1}}\longrightarrow H_{n+1}[n+1]$$ is pure. As $U\in\mathcal{F}\ast\mathcal{F}[1]\ast\cdots\ast\mathcal{F}[n]$, $\mathrm{fd}_{\mathcal{H}_M}U=\mathrm{flatdim}_{\mathcal{T}}U\leq n$ by Theorem \ref{lem2.21} and Lemma \ref{lem2.11}, it follows that
 $\mathrm{fd}_{\mathcal{H}_M}H=\mathrm{flatdim}_{\mathcal{T}}H\leq n$, as required.
\end{proof}

The theorem recovers the classical result of Hovey and Lockridge.

\begin{cor}\label{lem5.6} {\rm(\cite[Theorem 2.10]{H})} Let $R$ be a  ring and denote by $\mathrm{wgldim} R$ the weak global dimension of $R$.
Then $\mathrm{gh.dim}R=\mathrm{wgldim} R=\mathrm{add}(R)\textrm{-}\mathrm{tri.dim}\mathrm{D}^c(R)$.
\end{cor}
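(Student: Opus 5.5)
The plan is to specialize Theorem \ref{lem5.5} to $\mathcal{T}=\mathrm{D}(R)$ with $M=R$, and then to identify each abstract invariant with its classical counterpart. By Example \ref{lem:2.2}(1), $\mathrm{D}(R)$ is a compactly generated tensor triangulated category whose tensor unit $R$ is compact and is a compact tilting generator. Compact objects (perfect complexes) are strongly dualizable. Since $M={\i}=R$, the hypothesis $\mathrm{add}(M)=\mathrm{add}({\i})$ holds trivially, so all the hypotheses of Theorem \ref{lem5.5} are met.

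The second step is to identify the terms.
\begin{itemize}
\item The $t$-structure induced by $R$ is the standard one. Its heart $\mathcal{H}_R=R^{\bot_{\neq0}}$ consists of complexes with cohomology concentrated in degree $0$, so $\mathcal{H}_R\simeq\mathrm{Mod}\,R$, and $\mathcal{H}^0_R$ is ordinary cohomology $\mathrm{H}^0$.
\item Under this equivalence $\mathcal{P}=\mathrm{Add}(R)$ becomes the projective modules. By Remark \ref{lem:2.0}(1), $\mathcal{F}$ becomes the flat modules.
\item Hence $\mathrm{fd}_{\mathcal{H}_R}H$ is the classical flat dimension of the module $H$, and $\mathrm{wgldim}\mathcal{H}_R=\mathrm{wgldim}R$.
\item $R$-ghosts are exactly the morphisms $f$ with $\mathrm{Hom}(R[-i],f)=\mathrm{H}^i(f)=0$ for all $i$, i.e.\ the classical ghosts. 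So $\mathrm{gh.dim}\mathcal{T}$ is the supremum of ghost lengths over perfect complexes, which is $\mathrm{gh.dim}R$ of \cite{H}.
\item $\mathrm{add}(M)$-$\mathrm{tri.dim}\mathcal{T}^c$ is by definition $\mathrm{add}(R)$-$\mathrm{tri.dim}\mathrm{D}^c(R)$.
\end{itemize}

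Substituting these identifications into the chain of equalities of Theorem \ref{lem5.5} gives the corollary. The flat-dimension term is not needed for the statement, but it recovers the classical flat dimension by Remark \ref{lem:2.0}(1).

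The only point requiring care is $\mathcal{F}$, i.e.\ checking that pure quotients of free modules in the triangulated sense are exactly the flat modules. I would argue this via Lemma \ref{lem1.03}(1): $F\in\mathcal{F}$ if and only if $\mathbb{I}_R(F)=\mathrm{Hom}_{\mathbb{Z}}(F,\mathbb{Q}/\mathbb{Z})$ lies in $\mathcal{I}=\mathrm{Prod}(R^\ast)$, i.e.\ is an injective module concentrated in degree $0$. By Lambek's criterion, this holds exactly when $F$ is a flat module. All other identifications are immediate from the definitions.
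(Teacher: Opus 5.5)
Your proposal follows the paper's route exactly; the paper simply says Theorem \ref{lem5.5} recovers the result. You specialize to $\mathcal{T}=\mathrm{D}(R)$ with $M=R$ the tensor unit and identify $\mathcal{H}_R\simeq\mathrm{Mod}\,R$, $R$-ghosts with classical ghosts, and $\mathcal{F}$ with the flat modules. Your Lambek-criterion argument via Lemma \ref{lem1.03}(1) is a welcome justification of what Remark \ref{lem:2.0}(1) only asserts.

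One caveat, shared with the paper: a monoidal structure on $\mathrm{D}(R)$ with unit $R$ forces $R\cong\mathrm{End}_{\mathrm{D}(R)}(R)$ to be commutative (Eckmann--Hilton). So this specialization proves the corollary only for commutative $R$. For arbitrary rings, as in Hovey--Lockridge, the argument would have to use the character duality $\mathrm{Hom}_{\mathbb{Z}}(-,\mathbb{Q}/\mathbb{Z})$ between $\mathrm{D}(R)$ and $\mathrm{D}(R^{\mathrm{op}})$ instead of $\mathbb{I}_R$.
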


 This corollary is a generalization of \cite[Theorem 8.14]{EFHO} and \cite[Theorem 3.7]{L}.

\begin{cor}\label{lem4.0} If $\mathrm{add}(M)=\mathrm{add}({\i})$, then the following are equivalent:

$(1)$ $\mathrm{gh.dim}\mathcal{T}=0$;

 $(2)$ Any $M$-ghost triangle $X\rightarrow Y\rightarrow Z\stackrel{f}\rightarrow X[1]$  is pure;

 $(3)$ $M$ is tilting and $\mathcal{H}_M=\mathcal{F}$.
\end{cor}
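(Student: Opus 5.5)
The plan is to prove the cycle (1) $\Rightarrow$ (2) $\Rightarrow$ (3) $\Rightarrow$ (1), reducing everything to tensor-vanishing and duality arguments already available.

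\emph{(1) $\Rightarrow$ (2).} Let $X\rightarrow Y\rightarrow Z\stackrel{f}\rightarrow X[1]$ be an $M$-ghost triangle and $C\in\mathcal{T}^c$. Since $\mathrm{gh.len}_MC\leq 0$, every $M$-ghost out of $C$ vanishes. For any $c:C\rightarrow Z$, the composite $fc$ is an $M$-ghost, because $\mathcal{G}_M$ is an ideal. Hence $fc=0$ and $\mathrm{Hom}_\mathcal{T}(C,f)=0$, so $f$ is phantom. The triangle is then pure by Lemma \ref{lem1.22}.

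\emph{(2) $\Rightarrow$ (3).} First I show that $M$ is tilting, following the argument of Corollary \ref{lem0.3}(3)$\Rightarrow$(4). The triangle $\tau^{\leq-1}(M)\rightarrow M\rightarrow\mathcal{H}^0_M(M)\stackrel{f}\rightarrow\tau^{\leq-1}(M)[1]$ is $M$-ghost, so $f$ is phantom. Since $M$ is compact, $\mathrm{Hom}_\mathcal{T}(M,f)=0$, and $M\rightarrow\mathcal{H}^0_M(M)$ is a pure epimorphism.

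To conclude $\tau^{\leq-1}(M)=0$, I plan to apply $\mathbb{I}_{\i}$. Because $f$ is phantom, $\mathbb{I}_{\i}(f)=0$, so $\mathbb{I}_{\i}(M)\cong\mathbb{I}_{\i}(\mathcal{H}^0_M(M))\oplus\mathbb{I}_{\i}(\tau^{\leq -1}M)$. Lemma \ref{lem1.4} bounds $\mathrm{inf}$ and $\mathrm{sup}$ of the summands, and $\mathbb{I}_{\i}(M)\in\mathcal{I}$ has cohomology only in degree $0$. This forces $\mathbb{I}_{\i}(\tau^{\leq-1}M)$, which lies in degrees $\geq1$, to vanish. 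Since $\mathbb{I}_{\i}$ reflects zero objects (via the pure monomorphism $i_X$ into $\mathbb{I}^2_{\i}X$), we get $\tau^{\leq -1}M=0$, so $M\in\mathcal{H}_M$ and $M$ is tilting. I expect this step to be the main obstacle, since phantom maps need not vanish on non-compact objects. If the direct argument fails, I would instead use the split triangle of duals together with $\mathrm{add}(M)=\mathrm{add}(\i)$.

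Next take $H\in\mathcal{H}_M$ and a sppj triangle $K\rightarrow P\stackrel{g}\rightarrow H\stackrel{h}\rightarrow K[1]$ with $P\in\mathcal{P}\subseteq\mathcal{H}_M$. Then $h$ is an $M$-ghost by Lemma \ref{lem2.0}, exactly as in Corollary \ref{lem0.3}. By (2) the triangle is pure, so $H\in\mathcal{F}$. Conversely $\mathcal{F}\subseteq\mathcal{H}_M$: Lemma \ref{lem1.03}(1) gives $\mathbb{I}_{\i}(F)\in\mathcal{I}\subseteq\mathcal{H}_M$, and Lemma \ref{lem1.4} then gives $\mathrm{sup}F=\mathrm{inf}F=0$. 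Hence $\mathcal{H}_M=\mathcal{F}$.

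\emph{(3) $\Rightarrow$ (1).} If $\mathcal{H}_M=\mathcal{F}$, every object of $\mathcal{H}_M$ has flat dimension $0$, so $\mathrm{wgldim}\mathcal{H}_M=0$. Since $M$ is tilting and $\mathrm{add}(M)=\mathrm{add}(\i)$, Theorem \ref{lem5.5} yields $\mathrm{gh.dim}\mathcal{T}=\mathrm{wgldim}\mathcal{H}_M=0$.
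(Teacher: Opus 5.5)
Your (1) $\Rightarrow$ (2) and (3) $\Rightarrow$ (1) are correct. Your (1) $\Rightarrow$ (2) is in fact cleaner than the paper's. The paper writes $Z$ as a homology colimit of compact objects and factors $c$ through some $Z_\lambda$, whereas you observe directly that $fc$ is already an $M$-ghost out of the compact object $C$.

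The gap is in (2) $\Rightarrow$ (3), at the step $\tau^{\leq-1}(M)=0$. Your key claim is that $\mathbb{I}_{{\i}}(M)\in\mathcal{I}$ has cohomology only in degree $0$. This is not available while $M$ is merely silting. By Lemma \ref{lem1.4}, $\mathcal{H}^{-t}_M(\mathbb{I}_{{\i}}(M))\cong\mathbb{I}_{{\i}}(\mathcal{H}^t_M(M))$, so $\mathbb{I}_{{\i}}(M)$ is concentrated in degree $0$ exactly when $\mathrm{inf}M=0$. That is, exactly when $\tau^{\leq-1}(M)=0$, which is what you are trying to prove, so the argument is circular. Concretely, for a non-positive dg ring $A$, the object $\mathbb{I}_{A}(A)\in\mathcal{I}$ has nonzero cohomology in every degree $i\geq0$ with $\mathrm{H}^{-i}(A)\neq0$. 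No degree count can rescue this. The splitting $\mathbb{I}_{{\i}}(M)\cong\mathbb{I}_{{\i}}(\mathcal{H}^0_M(M))\oplus\mathbb{I}_{{\i}}(\tau^{\leq-1}M)$ is fully consistent with the cohomology of $\mathbb{I}_{{\i}}(M)$: the first summand supplies degree $0$ and the second supplies the degrees $\geq1$. Your fallback sentence does not identify what is missing.

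The missing idea is that objects of $\mathcal{I}$ are detected by $\mathcal{H}^0_M$, not that they sit in degree $0$. Here is the repair:
\begin{enumerate}
\item By Remark \ref{lem:3.2}(3), $\mathbb{I}_{{\i}}(M)\in\mathcal{I}$, and $\mathcal{I}=\mathrm{Prod}(M^\ast)$ is closed under summands. So $I:=\mathbb{I}_{{\i}}(\tau^{\leq-1}M)$ lies in $\mathcal{I}$.
\item By Lemma \ref{lem0.2}, $\mathrm{Hom}_\mathcal{T}(I,I)\cong\mathrm{Hom}_\mathcal{T}(\mathcal{H}^0_M(I),I)$.
\item Since $\mathrm{inf}I\geq1$, we have $\mathcal{H}^0_M(I)=0$. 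Hence $\mathrm{Hom}_\mathcal{T}(I,I)=0$ and $I=0$.
\item Your reflection argument via $i_X$ then gives $\tau^{\leq-1}(M)=0$.
\end{enumerate}
This is essentially the paper's route. The paper instead obtains $I\in\mathcal{I}$ from $\tau^{\leq-1}(M)$ being a pure subobject of $M$, hence in $\mathcal{F}$ by Lemma \ref{lem1.03}(1). Once tilting is secured this way, the rest of your (2) $\Rightarrow$ (3) goes through. That includes the inclusion $\mathcal{F}\subseteq\mathcal{H}_M$, which the paper leaves implicit and which genuinely needs tilting to ensure $\mathcal{I}\subseteq\mathcal{H}_M$.
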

\begin{proof} (1) $\Rightarrow$ (2) By \cite[Lemma 1.5]{K1},  $Z$ is a homology colimit of a filtered diagram of compact objects, i.e., $Z\simeq\textrm{hcolim}Z_\lambda$ for a diagram $\{Z_\lambda\}_{\lambda\in\Lambda}$ in $\mathcal{T}^c$. Let $C\in\mathcal{T}^c$ and $g:C\rightarrow Z$ be a morphism. As $\mathrm{Hom}_\mathcal{T}(C,Z)\cong\textrm{colim}\mathrm{Hom}_\mathcal{T}(C,Z_\lambda)$, $f$ factors through some $Z_\lambda$. One has a commutative diagram:\begin{center} $\xymatrix@C=20pt@R=20pt{
& & & C\ar[ddl]|g\ar[dl]|{g_\lambda}&\\
 X \ar@{=}[d]\ar[r]& Y_\lambda\ar[d]\ar[r] &Z_\lambda\ar[d]\ar[r]^{f_\lambda\ }&X[1]\ar@{=}[d] \\
X \ar[r]& Y\ar[r] &Z\ar[r]^{f\ \ }&X[1] }$
\end{center}As $f_\lambda\in\mathcal{G}_M=0$, $f_\lambda g_\lambda=0$, it implies that $fg=0$. Thus the triangle is pure.

(2) $\Rightarrow$ (3) As the exact triangle $\tau^{\leq-1}(M)\rightarrow M\rightarrow \mathcal{H}^0_M(M)\stackrel{f}\rightarrow\tau^{\leq-1}(M)[1]$ is $M$-ghost, it is pure, so $\mathcal{H}^0_M(M),\tau^{\leq-1}(M)\in\mathcal{F}$ by Lemma \ref{lem1.03}. Since $\mathbb{I}_{{\i}}(\tau^{\leq-1}(M))\in\mathcal{T}^{\geq1}$, $\mathrm{Hom}_\mathcal{T}(\mathbb{I}_{{\i}}(M),\mathbb{I}_{{\i}}(\tau^{\leq-1}(M)))\cong
\mathrm{Hom}_\mathcal{T}(\mathcal{H}^0_M(\mathbb{I}_{{\i}}(M)),\mathbb{I}_{{\i}}(\tau^{\leq-1}(M)))
=0$ by Lemma \ref{lem0.2}. Thus $\tau^{\leq-1}(M)=0$ and $M$ is tilting. Let $H\in\mathcal{H}_M$ and $K\rightarrow P\stackrel{f}\rightarrow H\stackrel{g}\rightarrow K[1]$ be an exact triangle with $f$ a sppj morphism. Then $g\in\mathcal{G}_M$ by Lemma \ref{lem2.0}, so $g$ is phantom. Thus $H\in\mathcal{F}$, and then $\mathcal{H}_M=\mathcal{F}$.

(3) $\Rightarrow$ (1) This follows from Theorem \ref{lem5.5}.
\end{proof}

\bigskip \centerline {\bf Acknowledgements}
\bigskip
This research was partially supported by National Natural Science Foundation of China (12571035, 12671053).

\end{document}